\documentclass[a4paper,10pt]{article} 

\usepackage[utf8]{inputenc} 
\usepackage[T1]{fontenc} 
\usepackage[english]{babel} 
\usepackage{amsmath} 
\usepackage{amsthm}
\usepackage{amssymb}

\usepackage{nicefrac} 
\usepackage{hyperref}
\usepackage{fullpage}

\allowdisplaybreaks

\usepackage{graphicx}
\usepackage{braket}
\usepackage{subcaption}
\usepackage{csquotes}
\usepackage{esint}
\usepackage{color,xcolor}
\usepackage{comment}

\newcommand{\abs}[1]{\left|#1\right|}
\newcommand{\norm}[1]{\left \| #1\right \|}
\newcommand{\R}{\mathbb{R}}
\newcommand{\eps}{\varepsilon}
\newcommand{\ue}{U_\varepsilon}
\newcommand{\ve}{V_\varepsilon}
\newcommand{\we}{W_\varepsilon}
\newcommand{\ze}{Z_\varepsilon}
\newcommand{\phie}{\Phi_\varepsilon}
\newcommand{\psie}{\Psi_\varepsilon}
\newcommand{\tildeue}{\tilde U_\varepsilon}
\newcommand{\tildeve}{\tilde V_\varepsilon}

\numberwithin{equation}{section}

\usepackage{mathtools}
\theoremstyle{plain}
\newtheorem{theorem}{Theorem}[section]

\theoremstyle{plain}
\newtheorem{definition}[theorem]{Definition}

\theoremstyle{plain}
\newtheorem{lemma}[theorem]{Lemma}

\theoremstyle{plain}

\theoremstyle{plain}
\newtheorem{proposition}[theorem]{Proposition}

\theoremstyle{remark}
\newtheorem{remark}[theorem]{Remark}


\def\sideremark#1{\ifvmode\leavevmode\fi\vadjust{\vbox to0pt{\vss
 \hbox to 0pt{\hskip\hsize\hskip1em
 \vbox{\hsize2.1cm\tiny\raggedright\pretolerance10000
  \noindent #1\hfill}\hss}\vbox to15pt{\vfil}\vss}}}%

\title{Existence of solutions on the critical hyperbola for a pure Lane-Emden system with  Neumann boundary conditions}
\author{Angela Pistoia, Delia Schiera, and Hugo Tavares}
\date{\today}

\begin{document}
\maketitle
\begin{abstract}
We study the following Lane-Emden system 
\[ -\Delta u=|v|^{q-1}v \quad \text{ in } \Omega, \qquad -\Delta v=|u|^{p-1}u \quad \text{ in } \Omega, \qquad u_\nu=v_\nu=0 \quad \text{ on } \partial \Omega, \]
with $\Omega$ a bounded regular domain of $\R^N$, $N \ge 4$, and exponents $p, q$ belonging to the so-called critical hyperbola $1/(p+1)+1/(q+1)=(N-2)/N$.
We show that, under suitable conditions on $p, q$, least-energy (sign-changing) solutions exist, and they are classical. 
In the proof we exploit a dual variational formulation which allows to deal with the strong indefinite character of the problem. We establish a compactness condition which is based on a new Cherrier type inequality. We then prove such condition by using as test functions the solutions to the system in the whole space and performing delicate asymptotic estimates. If $N \ge 5$, $p=1$, the system above reduces to a biharmonic equation, for which we also prove existence of least-energy solutions.  Finally, we prove some partial symmetry and symmetry-breaking results in the case $\Omega$ is a ball or an annulus.
\end{abstract}

\medskip 
\noindent \begin{keywords}
Dual method, critical exponents, Hamiltonian elliptic system, biharmonic equation, symmetry breaking, least energy nodal solutions.
\end{keywords}\\
\begin{MScodes}
35J50, 35B33, 35B38, 35J30, 35J47, 35B06.
\end{MScodes}

\section{Introduction}
Consider a bounded domain $\Omega$ of class $C^2$ in $\R^N$, $N \ge 4$, and denote with $\nu$ the outward pointing normal on $\partial \Omega$. Our problem reads as follows 
\begin{equation}\label{system} 
-\Delta u= \abs{v}^{q-1}v  \text{ in } \Omega,\qquad
-\Delta v= \abs{u}^{p-1}u  \text{ in } \Omega,\qquad
u_\nu=v_\nu=0  \text{ on } \partial \Omega, \end{equation}
with
\begin{equation}\label{CH} \frac{1}{p+1} +\frac{1}{q+1}  = \frac{N-2}{N}. \end{equation}
Condition \eqref{CH} is, for these systems, the correct notion of criticality, and we say that $(p, q)$ lies on the \emph{critical hyperbola}. The origin of this concept dates back to the papers \cite{CFM, PvV}, see also the survey \cite{BST} for several considerations and motivations behind it.

We recall that a strong solution to  \eqref{system} is a pair 
\[ (u, v) \in W^{2, \frac{q+1}{q}} (\Omega) \times W^{2, \frac{p+1}{p}} (\Omega) \]
 satisfying \eqref{system} a.e. (the boundary condition being understood in the sense of traces). An important observation is that solutions to \eqref{system} necessarily satisfy the compatibility condition:
\[
\int_\Omega\abs{v}^{q-1}v = \int_\Omega\abs{u}^{p-1}u=0.
\]
Therefore, if $(u,v)\neq(0,0)$ is a strong solution, then necessarily both $u$ and $v$ are nodal (\emph{i.e.} sign-changing) functions.  

Observe also that, under \eqref{CH}, we have the continuous (but not compact) embeddings 
\begin{equation}\label{eq:Sobolevembeddings}
W^{2, \frac{q+1}{q}} (\Omega)\hookrightarrow L^{p+1}(\Omega),\quad \text{ and } \quad  W^{2, \frac{p+1}{p}} (\Omega)\hookrightarrow L^{q+1}(\Omega).
\end{equation}
Therefore, the associated Euler-Lagrange functional
\[ I(u, v):= \int_\Omega \nabla u \cdot \nabla v - \frac{1}{p+1} \int_\Omega |u|^{p+1} - \frac{1}{q+1} \int_\Omega |v|^{q+1} \]
is finite at strong solutions. 

\begin{definition} 
Let $p,q$ satisfy \eqref{CH}. A least energy (nodal) solution for \eqref{system} is a strong solution which achieves the least energy (nodal) level
\[
c_{p,q}:=\inf\{I(u,v):\ (u,v)\in W^{2, \frac{q+1}{q}} (\Omega) \times W^{2, \frac{p+1}{p}} (\Omega) \text{ strong solution of } \eqref{system}\}.
\]
 \end{definition}
The existence of least energy nodal solutions is known in the subcritical case: $\frac{1}{p+1}+\frac{1}{q+1}>\frac{N-2}{N}$ with $pq\neq 1$, see \cite{ST2}. The goal of our work is to extend this to the critical case. In this direction, the main result of our paper is the following:
\begin{theorem}\label{main thm} 
Let $p, q$ satisfy \eqref{CH}, and moreover
\begin{itemize}
\item[(i)] $N \ge 6$ and 
$
p,q > \frac{N+2}{2(N-2)}$, 
or
\item[(ii)] $N=5$ and $p, q> \frac{17}{13}$, or
\item[(iii)] $N=4$ and $p, q > \frac 7 3$. 
\end{itemize} 
 Then there exists a least energy (nodal) classical solution to \eqref{system}.
\end{theorem}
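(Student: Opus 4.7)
The plan is to bypass the strong indefiniteness of $I$ by passing to the \emph{dual variational formulation} in the spirit of Clarke--Ekeland. Set $f:=|u|^{p-1}u$ and $g:=|v|^{q-1}v$; the compatibility condition imposes $\int_\Omega f=\int_\Omega g=0$. Let $\mathcal{K}$ denote the inverse of $-\Delta$ with Neumann boundary conditions, restricted to the subspace of zero-mean functions. Then $(u,v)=(\mathcal{K} g,\mathcal{K} f)$ is a strong solution of \eqref{system} iff $(f,g)$ is a critical point of the $C^1$ functional
\[
\phi(f,g):=\frac{p}{p+1}\int_\Omega|f|^{\frac{p+1}{p}}+\frac{q}{q+1}\int_\Omega|g|^{\frac{q+1}{q}}-\int_\Omega f\,\mathcal{K} g
\]
on $L^{(p+1)/p}_0(\Omega)\times L^{(q+1)/q}_0(\Omega)$, and one can check that $\phi(f,g)=I(u,v)$ at critical points. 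The gain is that in these new variables the bilinear term is now a genuinely sign-definite quadratic form, so $\phi$ enjoys a mountain-pass/Nehari geometry and $c_{p,q}$ coincides with the infimum of $\phi$ over the dual Nehari manifold $\mathcal{N}:=\{(f,g)\neq(0,0):\langle\phi'(f,g),(f,g)\rangle=0\}$.

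Standard fibering arguments together with Ekeland's principle then yield a Palais--Smale sequence $(f_n,g_n)\subset\mathcal{N}$ at level $c_{p,q}$. The central analytical step is a \emph{compactness condition}: there exists a threshold $c^{*}>0$ such that every $(\mathrm{PS})_c$ sequence with $c<c^{*}$ is relatively compact. As announced in the abstract, I would prove this via a new Cherrier-type inequality adapted to our dual system. The heuristic is that, for the critical embeddings \eqref{eq:Sobolevembeddings}, the best constant on mean-zero functions can be improved by a factor of $2^{2/N}$ when the concentration occurs at a boundary point, since only \emph{half} of a bubble lies inside $\Omega$. Transported through the duality, this improvement raises $c^{*}$ strictly above the energy of the whole-space ground state of the Lane--Emden system on $\R^N$, creating a window in which one may hope to place $c_{p,q}$.

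The final step is then to verify $c_{p,q}<c^{*}$. I would use the classical ground-state solutions $(U,V)$ of $-\Delta U=|V|^{q-1}V$, $-\Delta V=|U|^{p-1}U$ in $\R^N$ as building blocks: rescale by a small parameter $\eps>0$, center at a point $x_0\in\partial\Omega$, smoothly truncate inside $\Omega$, and subtract the mean to enforce the zero-average constraint. This produces test functions $(f_\eps,g_\eps)$ for which, expanding $\sup_{t>0}\phi(tf_\eps,tg_\eps)$, one should obtain an estimate of the form
\[
c_{p,q}\le c^{*}-C_1\,\eps^{\alpha}+C_2\,\eps^{\beta}+o(\eps^{\min\{\alpha,\beta\}}),
\]
with $C_1,C_2>0$, so that the task reduces to showing $\alpha<\beta$. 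The main obstacle, and the precise reason for the dimensional restrictions (i)--(iii) in the statement, is this asymptotic bookkeeping: the exponents $\alpha,\beta$ and several borderline integrals depend sensitively on $p,q$ through the decay rates of $U,V$ at infinity and through delicate contributions of the Neumann Green function and the truncation/projection errors. In dimensions $N=4,5$ the slower decay makes these error terms larger, and only above the stronger thresholds $p,q>7/3$ and $p,q>17/13$ do the gain and loss exponents fall in the right order. Once $c_{p,q}<c^{*}$ is established, compactness yields convergence of the Palais--Smale sequence to a nontrivial critical point $(f,g)$ of $\phi$, hence a strong solution $(u,v)=(\mathcal{K} g,\mathcal{K} f)$ of \eqref{system}; the Sobolev embeddings \eqref{eq:Sobolevembeddings} and a standard Neumann elliptic bootstrap then upgrade $(u,v)$ to a classical solution.
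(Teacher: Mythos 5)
Your proposal follows the same roadmap as the paper: pass to the Clarke--Ekeland dual functional, establish a compactness threshold via a Cherrier-type Neumann inequality, and verify the threshold with boundary bubble test functions built from the whole-space Lane--Emden solutions $(U,V)$; you also correctly identify the source of the dimensional restrictions (the decay rates of $U,V$ are too slow in low dimension). A few points of divergence and a conceptual wrinkle are worth flagging. First, the paper works with the scale-invariant maximization $D_{p,q}=\sup\{\int_\Omega fKg:\ \gamma_1\|f\|_\alpha^\alpha+\gamma_2\|g\|_\beta^\beta=1\}$ and proves compactness when $D_{p,q}>2^{2/N}/S_{p,q}$, which is equivalent to your dual-Nehari picture via $D_{p,q}^{-N/2}=\tfrac{N}{2}c_{p,q}$, so this is a matter of presentation. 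Second, you propose to \emph{smoothly truncate} the bubbles before projecting to zero mean; the paper deliberately avoids truncation and instead takes $\tilde U_\eps=U_\eps^p-\tfrac{1}{|\Omega|}\int_\Omega U_\eps^p$ (and similarly $\tilde V_\eps$), tracking boundary terms and mean corrections directly. Because $(U,V)$ has no closed form and is known only through its decay at infinity, avoiding an extra truncation layer is not merely cosmetic: it removes a family of error terms whose size would have to be balanced against the same decay rates you are already fighting.

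Third, and this is the point I would ask you to correct: the Cherrier-type inequality \emph{degrades} rather than improves the Sobolev constant, replacing $1/S$ by $2^{2/N}/S+\varepsilon$ on $W^{2,\eta}_\nu(\Omega)$; accordingly the compactness threshold is $c^*=\tfrac{1}{N}S_{p,q}^{N/2}$, which is exactly \emph{half} of the whole-space ground-state energy $\tfrac{2}{N}S_{p,q}^{N/2}$ of \eqref{eq:limitingsystem_intro}, not above it. The reason the scheme works is that a bubble centered at a boundary point of positive mean curvature carries roughly half of the full-bubble energy \emph{minus} a curvature-driven gain of order $\eps$, and it is this linear-in-$\eps$ gain (Lemma \ref{step6}) that pushes $c_{p,q}$ strictly below $c^*$ once the error terms (Lemmas \ref{step2}--\ref{step5}) are controlled; the conditions in (i)--(iii) are precisely what makes these errors $o(\eps)$. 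With that heuristic straightened out, your proposal is a correct sketch of the paper's argument.
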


We point out that we prove regularity of all possible solutions (not only of least energy) for every $p, q$ satisfying \eqref{CH}, see the next result. This is not trivial due to the coupling in the system and since either $p$ and $q$ might be smaller than 1. Previously, the regularity  was known in the subcritical superlinear case \cite{ST2}, and in the critical scalar case $p=q=2^*$ \cite{ST}; the arguments used in these situations (respectively a bootstrap and a Brezis-Kato argument) cannot be used in our situation.   
\begin{proposition}\label{reg2}
Let $(u, v)$ be a strong solution to \eqref{system}
where $p, q$ satisfy \eqref{CH}. 
Then $(u, v) \in C^{2, \zeta}(\overline \Omega) \times C^{2, \eta}(\overline \Omega)$, with: $\zeta < q$ if $0<q <1$, and $\zeta \in (0, 1)$ if $q \ge 1$; $\eta< p$ if $0<p<1$, and $\eta \in (0, 1)$ if $p \ge 1$. 
\end{proposition}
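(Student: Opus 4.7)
The plan is to bootstrap regularity in three successive stages: first improve integrability to $u,v\in L^\infty(\Omega)$; then apply $L^r$-elliptic theory for the Neumann Laplacian to obtain $u,v\in W^{2,r}(\Omega)$ for all $r<\infty$, and hence $u,v\in C^{1,\alpha}(\overline\Omega)$ for all $\alpha\in(0,1)$; and finally invoke Schauder estimates to reach the claimed $C^{2,\zeta}\times C^{2,\eta}$ regularity.

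The hard part will be the first stage. The a priori integrability $u\in L^{p+1}$, $v\in L^{q+1}$ provided by \eqref{eq:Sobolevembeddings} is exactly the one balanced with the nonlinearities by \eqref{CH}, so a naive elliptic bootstrap cycles and does not gain integrability. I would therefore set up a Moser--Brezis--Kato iteration adapted to the Hamiltonian structure: for parameters $\beta,\gamma>0$ to be chosen, test the first equation in \eqref{system} with a truncated version of $|u|^{2\beta}u$ and the second with a truncated version of $|v|^{2\gamma}v$, the truncation being required both to make the test functions admissible and to allow a limiting argument when $p$ or $q$ lies below $1$. Applying the Sobolev embedding to $|u|^\beta u$ and $|v|^\gamma v$ produces inequalities of the form
\[
\|u\|_{L^{(\beta+1)2^*}}^{2(\beta+1)} \le C\int_\Omega |v|^q|u|^{2\beta+1} + \text{l.o.t.}, \qquad \|v\|_{L^{(\gamma+1)2^*}}^{2(\gamma+1)} \le C\int_\Omega |u|^p|v|^{2\gamma+1} + \text{l.o.t.}
\]
A Brezis--Kato type splitting of the coefficients $|v|^{q-1}$ and $|u|^{p-1}$ into a small-norm piece (absorbable on the left) plus a bounded piece (of lower order), together with H\"older inequalities balanced according to \eqref{CH}, makes the iteration strictly enlarge the integrability exponents at each step. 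Iterating yields $u,v\in L^r(\Omega)$ for every $r<\infty$, and a final standard step (Moser or De Giorgi--Stampacchia) promotes this to $u,v\in L^\infty(\Omega)$.

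Once the $L^\infty$ bound is available, both right-hand sides of \eqref{system} are bounded, so $L^r$-theory for the Neumann problem gives $u,v\in W^{2,r}(\Omega)$ for every $r<\infty$, and Morrey's embedding then yields $u,v\in C^{1,\alpha}(\overline\Omega)$ for every $\alpha\in(0,1)$; in particular $v$ is Lipschitz on $\overline\Omega$. Since the real function $t\mapsto |t|^{q-1}t$ is locally Lipschitz when $q\ge 1$ and globally $q$-H\"older when $0<q<1$, the composition $|v|^{q-1}v$ lies in $C^{0,\zeta}(\overline\Omega)$ for any $\zeta\in(0,1)$ when $q\ge 1$, and for any $\zeta<q$ when $0<q<1$. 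Schauder estimates for the Neumann problem then deliver $u\in C^{2,\zeta}(\overline\Omega)$ in the stated range, and the symmetric argument applied to the second equation yields $v\in C^{2,\eta}(\overline\Omega)$.
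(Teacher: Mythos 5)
Your Stages 2 and 3 (from $u,v\in L^\infty$ to $W^{2,r}$ for all $r$, to $C^{1,\alpha}$, and then Schauder for the final $C^{2,\zeta}\times C^{2,\eta}$ conclusion) are correct and coincide with what the paper does at the end of its proof.

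The gap is Stage 1, which is where the whole difficulty of the proposition sits, and your plan there is exactly the one the paper explicitly disavows. The introduction states, in reference to the scalar critical case treated in \cite{ST}, that ``the arguments used in these situations (respectively a bootstrap and a Brezis--Kato argument) cannot be used in our situation.'' The obstruction is real and twofold. First, the Brezis--Kato device relies on rewriting the right-hand side as $a(x)u$ with $a\in L^{N/2}$, splitting $a$ into a small-norm part (absorbed) and a bounded part; in the system $-\Delta u=|v|^{q-1}v$ the right-hand side is not proportional to $u$, so there is no coefficient to split. Replacing this by a naive Young/H\"older balance of $\int |v|^q|u|^{2\beta+1}$ against $\|u^{\beta+1}\|_{2^*}$ and $\|v^{\gamma+1}\|_{2^*}$ does not obviously close at the exponents dictated by \eqref{CH}: if one only has $u\in L^{p+1}$, $v\in L^{q+1}$, the H\"older pairing forces $2\beta+1 = (p+1)/(q+1)$, which is $<1$ when $p<q$ and so gives no gain. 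You assert that the iteration ``strictly enlarges the integrability exponents at each step,'' but this is precisely what a proof would have to demonstrate and is nontrivial at criticality; you give no exponent bookkeeping to support it. Second, when $p<1$ or $q<1$ (both allowed here on the hyperbola, e.g.\ $p=1$, $q=\frac{N+4}{N-4}$, and more extreme pairs), admissibility of test functions of the form $|u|^{2\beta}u$ and the limiting truncation argument need genuine care, which you flag but do not address.

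The paper's route around this is different in kind: rather than energy estimates, it uses the Green's function representation for the Neumann Laplacian (Proposition~\ref{green}), the Hardy--Littlewood--Sobolev inequality, and a contraction-mapping/regularity-lifting argument in the spirit of Chen--Li and \cite[Lemma B.1]{KP}. Concretely, with $u_L$ the truncation of $u$ at level $L$, one builds the double-Green's-function operator $T_L$ on $L^r(\Omega)$ and shows $\|T_Lf\|_r\le C\|u_L\|_{p+1}^{pq-1}\|f\|_r$; since $\|u_L\|_{p+1}\to 0$ as $L\to\infty$, $T_L$ is a contraction for $L$ large, and the identity $u_L=T_L(u_L)+F$ with $F$ bounded forces $u_L\in L^r$ for every $r$, hence $u\in L^\infty$ (similarly $v$). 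The verification that the two HLS applications and the H\"older step mesh exactly uses the critical hyperbola relation \eqref{CH}. If you want to salvage your plan, this is the technique to adopt; otherwise Stage 1 as written is not a proof.
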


The additional restrictions in Theorem \ref{main thm} come from the decay of the solutions in the whole space, which depend on the position of $(p,q)$ in the critical hyperbola.

If we take $N \ge 5$, $p=1$, $q = \frac{N+4}{N-4}$, or equivalently, $q=1$, $p = \frac{N+4}{N-4}$, system \eqref{system} reduces to the fourth order equation
\begin{equation}\label{biharmonic} 
\Delta^2 u = |u|^{\frac{8}{N-4}} u \text{ in } \Omega,\qquad
u_\nu=(\Delta u)_\nu= 0  \text{ on } \partial \Omega.
 \end{equation}
If $N > 6$, then 
\[  \frac{N+2}{2(N-2)} < \frac{N+4}{N-4}, \quad \text{ and } \quad \frac{N+2}{2(N-2)} < 1, \]
hence \eqref{biharmonic}
is contained in Theorem \ref{main thm} if $N >6$. 
However, the case $N=5, 6$ and $p=1$ is not included. Nonetheless, we prove the following.

\begin{theorem}\label{main thm 2}
    Let $N\geq 5$. Then there exists a least energy (nodal) classical solution to problem \eqref{biharmonic}. 
\end{theorem}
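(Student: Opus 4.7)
The plan is to exploit the fact that, when $p=1$ and $q=(N+4)/(N-4)$, the Hamiltonian system \eqref{system} reduces to the single fourth-order equation \eqref{biharmonic}: setting $w:=v$, one has $u=-\Delta w$, and the boundary condition $u_\nu=0$ becomes $(\Delta w)_\nu=0$. This allows a direct primal variational approach in the Hilbert space
\[ H := \bigl\{w\in H^2(\Omega)\ :\ w_\nu=0 \text{ on }\partial\Omega\bigr\}, \]
rather than the dual formulation used for the general system. The Euler-Lagrange functional
\[ J(w)=\tfrac12\int_\Omega|\Delta w|^2-\tfrac{N-4}{2N}\int_\Omega|w|^{2N/(N-4)} \]
is well-defined on $H$, and its critical points are exactly the strong solutions of \eqref{biharmonic}; the boundary condition $(\Delta w)_\nu=0$ arises as the natural one and the compatibility condition $\int_\Omega|w|^{8/(N-4)}w=0$ is automatic at critical points (test with $\phi\equiv 1\in H$), so every nontrivial solution is nodal. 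I would therefore search for a least-energy solution by minimising $J$ on the Nehari manifold $\mathcal N:=\{w\in H\setminus\{0\}:\langle J'(w),w\rangle=0\}$ and set $c:=\inf_{\mathcal N}J$.

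Only the dimensions $N=5,6$ require new arguments, as for $N\ge 7$ with $p=1$ the hypotheses of Theorem \ref{main thm} are satisfied. Mimicking the scheme of that theorem, I would first prove a Cherrier-type inequality adapted to $H$: for every $\eta>0$ there is $C_\eta>0$ such that, for all $w\in H$,
\[ \|w\|_{L^{2N/(N-4)}(\Omega)}^2\le \bigl(2^{4/N}S^{-1}+\eta\bigr)\|\Delta w\|_{L^2(\Omega)}^2+C_\eta\|w\|_{L^2(\Omega)}^2, \]
where $S$ is the best constant of the embedding $\mathcal D^{2,2}(\R^N)\hookrightarrow L^{2N/(N-4)}(\R^N)$. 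The factor $2^{4/N}$ reflects the fact that, in the Neumann setting, concentration at a boundary point uses only \enquote{half a bubble}. The inequality is proven by a partition-of-unity argument combining the sharp Sobolev inequality in $\R^N$ at interior points with its half-space counterpart at boundary points, in complete parallel to the Cherrier inequality established for the system. Together with a standard concentration-compactness analysis, this implies that any Palais-Smale sequence with energy strictly below the threshold
\[ c^\ast:=\tfrac{2}{N}\bigl(S/2^{4/N}\bigr)^{N/4}=\tfrac{1}{N}S^{N/4} \]
is relatively compact in $H$.

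It therefore remains to show $c<c^\ast$. I would proceed by testing $J$ on a modified biharmonic Talenti bubble
\[ U_{\eps,x_0}(x)=\alpha_N\,\eps^{(N-4)/2}\bigl(\eps^2+|x-x_0|^2\bigr)^{-(N-4)/2} \]
concentrated at a suitably chosen boundary point $x_0\in\partial\Omega$ (its location dictated by the mean curvature of $\partial\Omega$), cut off and corrected by a lower-order term $\psie$ that restores the Neumann condition $w_\nu=0$. Expanding
\[ \max_{t>0}J(t(U_{\eps,x_0}+\psie))=c^\ast-\Phi(\eps)+o(\Phi(\eps)), \]
the goal is to check that $\Phi(\eps)>0$ for $\eps$ small enough.

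The hard part is precisely this asymptotic expansion in the low dimensions $N=5,6$. For $N\ge 7$ the critical contribution and the first subleading correction are well separated in scale, but in low dimensions the $L^2$-mass of the bubble, the boundary-curvature term, and the corrections needed to bring the test function into $H$ all become of comparable order (often with logarithmic factors familiar from Brezis--Nirenberg-type estimates in low dimensions), and must be computed sharply in order to identify the sign of the leading correction $\Phi(\eps)$. Once $c<c^\ast$ is proven, the Cherrier-based compactness yields a minimizer on $\mathcal N$; a standard Lagrange multiplier argument, exploiting that $\langle G'(w),w\rangle=(2-2^\ast)\|\Delta w\|_{L^2}^2\neq 0$ for $w\in\mathcal N$ (with $G(w):=\langle J'(w),w\rangle$), shows that this minimizer is a weak, and by Proposition \ref{reg2} applied to $(u,v)=(-\Delta w,w)$ a classical, solution of \eqref{biharmonic}.
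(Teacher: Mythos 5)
Your overall strategy --- work in the primal Hilbert space $H^2_\nu(\Omega)$, establish a Cherrier-type inequality with constant $2^{4/N}/S$, use it to obtain compactness for energy levels below the half-bubble threshold, and show that the least-energy level lies strictly below that threshold --- is exactly the architecture of the paper's proof (Lemma \ref{sigma attained}, with $\Sigma$ playing the role of a scaled version of your $c$). Your discussion of the boundary condition $(\Delta w)_\nu=0$ as the natural one, the automatic compatibility condition, and the equivalence with the Nehari formulation is all sound, as is the reduction of compactness to Cherrier plus Brezis--Lieb.

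However, your proposal has a genuine gap at the decisive step: you state that the strict inequality $c<c^\ast$ must be obtained by \enquote{testing $J$ on a modified biharmonic Talenti bubble} concentrated at a boundary point, corrected to satisfy the Neumann condition, and then expanding sharply --- and you then explicitly acknowledge that this expansion is \enquote{the hard part} in $N=5,6$, but you do not carry it out. The paper avoids this computation entirely. Since the functional $u\mapsto\int_\Omega|\Delta u|^2$ is pointwise dominated by $u\mapsto\int_\Omega(|\Delta u|^2+|\nabla u|^2+\alpha|u|^2)$ for any $\alpha>0$, the quantity $\Sigma$ is at most the corresponding constrained infimum for the Paneitz--Branson-type operator studied in \cite{BCN}; the strict bound below $S^2/2^{4/N}$ for the latter is precisely \cite[Lemma 4.1]{BCN}, already proved there for all $N\geq 5$. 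Hence
\[
\Sigma \le \inf\left\{\int_\Omega\bigl(|\Delta u|^2+|\nabla u|^2+\alpha|u|^2\bigr): u\in H^2_\nu(\Omega),\ \|u\|_{L^{2N/(N-4)}(\Omega)}=1\right\} < \frac{S^2_{1,\frac{N+4}{N-4}}}{2^{4/N}}
\]
without any new asymptotic expansion or low-dimensional case analysis. This comparison is the key observation that turns the strategy into a proof; without it, or without actually executing your sketched expansion (with a choice of $x_0$ of positive mean curvature and tracking of the logarithmic corrections you allude to), your proposal does not establish $c<c^\ast$ and therefore does not prove the theorem. Everything that comes after the bound $c<c^\ast$ in your write-up (compactness of minimizing sequences, Lagrange multiplier, regularity via Proposition \ref{reg2}) is fine and matches Step 2 and Step 3 of the paper's Lemma \ref{sigma attained}.

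One smaller remark: you say that \enquote{only the dimensions $N=5,6$ require new arguments.} That is true from the point of view of what is not already covered by Theorem \ref{main thm}, but note that the paper's Lemma \ref{sigma attained} is dimension-uniform for $N\geq5$, so the $\Sigma$-argument gives a second, independent proof also for $N\geq7$ --- it does not need to be restricted to the low-dimensional cases.
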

In particular, this shows the existence of solutions to \eqref{biharmonic} also in the dimensions $N=5, 6$, however the proof of this result actually works for all $N\geq 5$, therefore it also gives a different proof of existence in case $N \ge 7$. 
Although, up to our knowledge, apparently there are no results regarding \eqref{biharmonic} in the literature, we recall that in  \cite{BCN} a related problem with a Paneitz-Branson type  operator, $Lw:=\Delta^2 u-\Delta u+\alpha u$ with $\alpha>0$, and the same boundary conditions was studied. 

By a perturbation argument, we also show the following 
\begin{theorem}\label{main thm1.2} 
Let $N=5,6$. There exists $\eps=\eps(N,\Omega)$ such that, if $p,q$  satisfy \eqref{CH} and either
\[
|p-1|+\abs{q-\frac{N+4}{N-4}}<\eps \quad \text{ or } \quad \abs{p-\frac{N+4}{N-4}}+|q-1|<\eps,
\]
then there exists a least energy (nodal) classical solution to \eqref{system}.
\end{theorem}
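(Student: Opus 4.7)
The plan is to deduce Theorem \ref{main thm1.2} from Theorem \ref{main thm 2} by a perturbation argument, reusing the dual variational machinery and the compactness criterion developed for Theorem \ref{main thm}. The idea is that for $N=5,6$ the pair $(1,(N+4)/(N-4))$ lies on the critical hyperbola but outside the region where Theorem \ref{main thm} applies, yet existence is established at this exact point by Theorem \ref{main thm 2}; by continuity of the relevant quantities in $(p,q)$ along the hyperbola, the compactness condition survives in a small neighborhood.

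Concretely, I would fix one of the two symmetric cases, say $(p,q)$ close to $(1,(N+4)/(N-4))$, and write $q=q(p)$ via the relation \eqref{CH} so that we have a one-parameter family parametrised by $p$. The first step is to set up the same dual functional and the same min-max (or Nehari-type) level $c_{p,q}$ used in the proof of Theorem \ref{main thm}, and to recall the compactness threshold $\Lambda(p,q)$ coming from the Cherrier-type inequality, so that existence of a least energy nodal solution follows provided the strict inequality $c_{p,q}<\Lambda(p,q)$ holds. Both $c_{p,q}$ and $\Lambda(p,q)$ depend on $p$ only through the exponents, the Sobolev/Cherrier constants on $\Omega$, and integrals of smooth functions, so the second step is to verify that they are continuous at $p=1$ along the critical hyperbola. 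For the threshold this reduces to the continuous dependence of the best Cherrier constant and of the whole-space ground state energy on the exponents; for $c_{p,q}$ one uses the biharmonic least energy nodal solution $u_0$ given by Theorem \ref{main thm 2} together with $v_0=-\Delta u_0$ as a test pair, plugs it into the dual formulation for nearby $(p,q)$, and checks by dominated convergence that the resulting energy converges to the biharmonic least energy as $p\to 1$.

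Combining these ingredients, the strict inequality $c_{1,(N+4)/(N-4)}<\Lambda(1,(N+4)/(N-4))$ established inside the proof of Theorem \ref{main thm 2} is preserved in a neighborhood of the biharmonic point on the critical hyperbola, which yields the desired compactness of minimizing sequences and hence the existence of a least energy nodal strong solution. Regularity then follows from Proposition \ref{reg2}.

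The main obstacle is the continuous dependence of $c_{p,q}$ on $(p,q)$. Since the underlying function spaces $W^{2,(q+1)/q}(\Omega)\times W^{2,(p+1)/p}(\Omega)$ themselves vary with $p$, one cannot directly compare functionals on a fixed space; instead one has to work in a common ambient space (e.g.\ $L^s\times L^t$ for suitable $s,t$, or via the dual variables $f=-\Delta u$, $g=-\Delta v$ modulo the compatibility conditions) and show both the upper bound $\limsup_{p\to 1} c_{p,q(p)}\le c_{1,(N+4)/(N-4)}$ via the test pair described above and the matching lower bound $\liminf_{p\to 1} c_{p,q(p)}\ge c_{1,(N+4)/(N-4)}$ using a limit of almost-minimizers, which requires uniform-in-$p$ bounds and an Ekeland/Palais--Smale type extraction. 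All the remaining steps (finding critical points below the threshold, recovering strong solutions, and upgrading to classical solutions) are then identical to those already carried out for Theorems \ref{main thm} and \ref{main thm 2}.
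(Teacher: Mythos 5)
Your overall strategy is the same as the paper's: perturb from the biharmonic point $(p^*,q^*)=(1,\tfrac{N+4}{N-4})$ (where existence is furnished by Theorem~\ref{main thm 2} via Lemma~\ref{sigma attained}), rely on the compactness criterion of Lemma~\ref{condition D}, and exploit continuity of $S_{p,q}$ along the hyperbola (which the paper gets from the convergence of bubbles in \cite{Pistoia}). So the skeleton is right.

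However, you have misdiagnosed ``the main obstacle''. You do \emph{not} need two-sided continuity of $c_{p,q}$; in the dual picture (cf.\ Proposition~\ref{prop:equiv}) the condition $D_{p,q}>2^{2/N}/S_{p,q}$ only requires a \emph{lower} bound on $D_{p,q}$, equivalently an \emph{upper} bound $\limsup_{(p,q)\to(p^*,q^*)} c_{p,q}\le c_{p^*,q^*}$. Your proposed matching lower bound $\liminf c_{p,q}\ge c_{p^*,q^*}$ via ``almost-minimizers and an Ekeland/Palais--Smale extraction'' is both unnecessary and problematic: before compactness is established one does not know that the infima are attained, the spaces $X^\alpha\times X^\beta$ move with $(p,q)$, and the relevant embeddings are not compact, so the extraction you gesture at would require exactly the machinery you are trying to establish. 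The paper sidesteps this entirely (Lemma~\ref{pq close}): take the extremal pair $(f_*,g_*)$ for $D_{p^*,q^*}$, truncate it at level $L$ (and recenter to keep zero average) so that $(f_L,g_L)\in L^\infty\times L^\infty$ lies in $X^\alpha\times X^\beta$ for \emph{every} $(p,q)$, use dominated convergence in $(p,q)$ for fixed $L$, then let $L\to\infty$. This gives the one-sided bound $\liminf D_{p,q}\ge D_{p^*,q^*}$ directly, with no need for a ``common ambient space'' beyond $L^\infty$. If you replace the two-sided continuity step by this truncation argument, your proposal becomes the paper's proof.
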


\smallbreak

In Figures \ref{fig:56} and \ref{fig:Nge7} we illustrate the assumptions of Theorems \ref{main thm} and \ref{main thm1.2}.  We now make some comments regarding the proof of Theorem \ref{main thm}.

\begin{figure}
 \centering 
\begin{subfigure}{0.48 \textwidth}
    \centering
 \includegraphics[width=\linewidth]{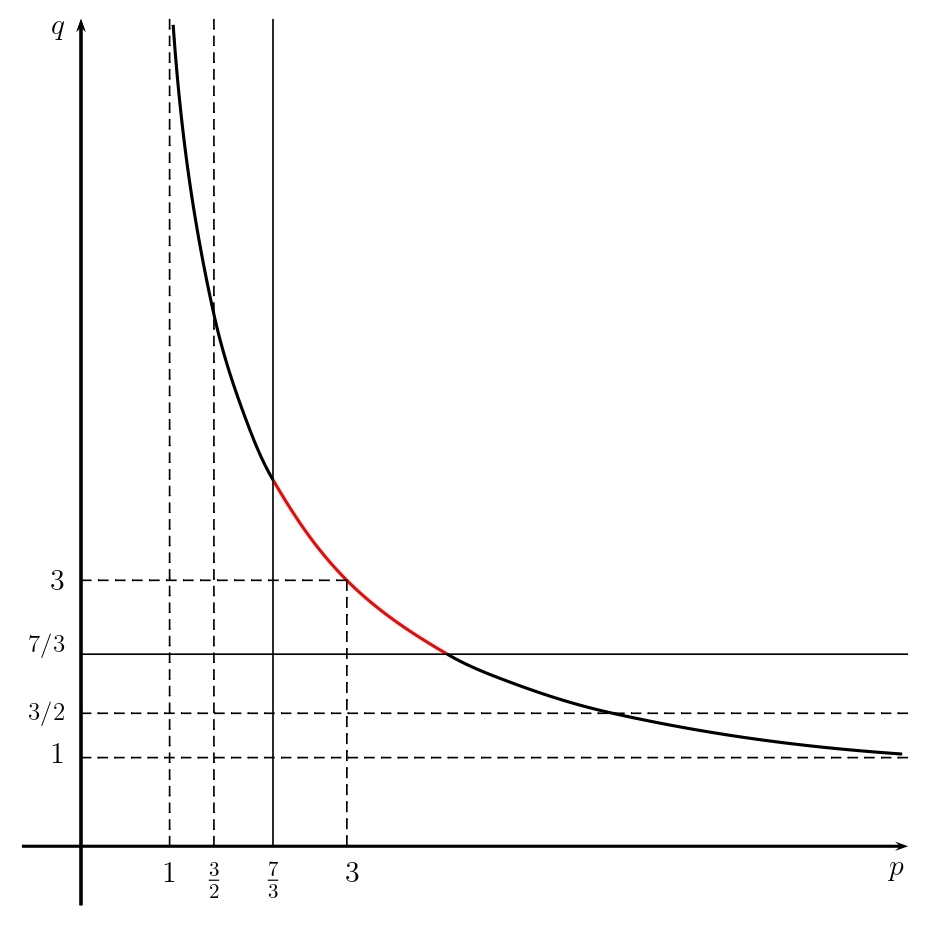}
\caption{Case $N=4$. }
\end{subfigure}%
\begin{subfigure}{0.48 \textwidth}
    \centering
  \includegraphics[width=\linewidth]{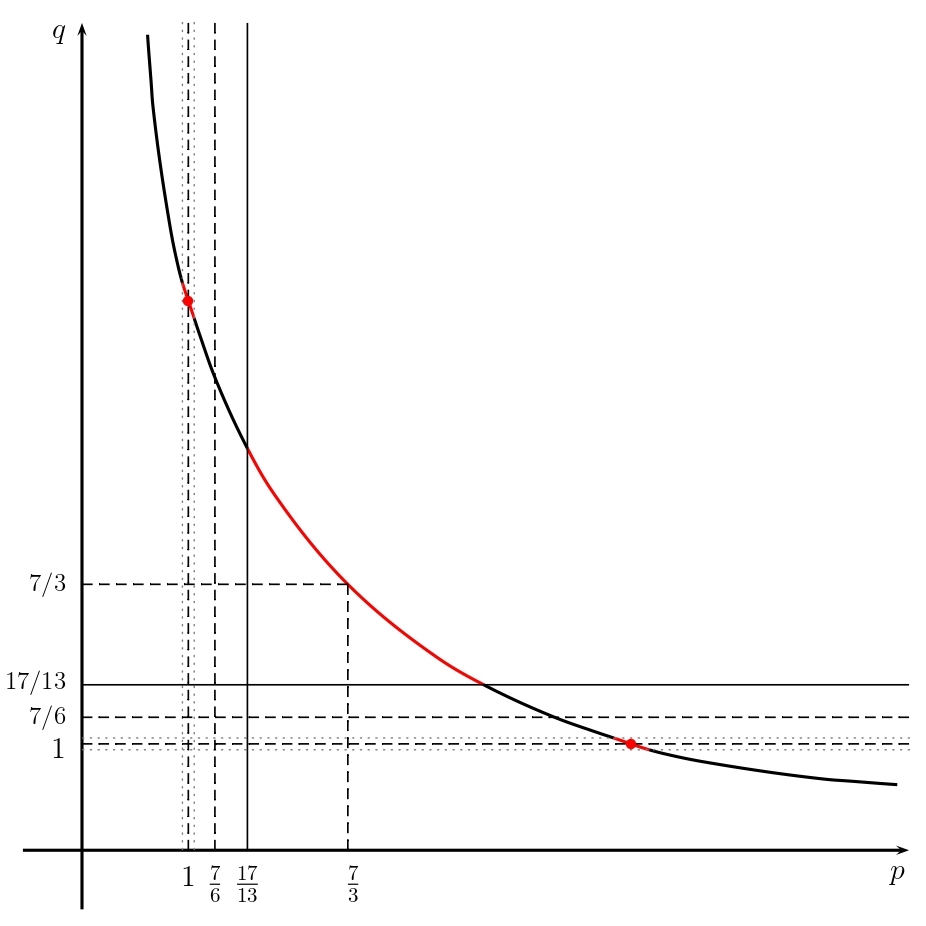}
\caption{Case $N=5$. }
\end{subfigure}
\caption{The red points on the critical hyperbola represent the values $(p, q)$ for which we prove existence of least energy solutions to \eqref{system}, see Theorem \ref{main thm}. Notice that in case $N=5$ we can also prove existence close to $p=1$ and $q=1$, see Theorem \ref{main thm1.2}. }\label{fig:56}
\end{figure}

In order to avoid the strongly indefinite character of the functional $I$ (the quadratic term $(u,v)\mapsto \int_\Omega \nabla u\cdot \nabla v$ does not have a sign), we work in a dual formulation (see Subsection \ref{subsec:variational} below). Following \cite{CK,ST2}, we provide equivalent variational characterizations for the least energy level (definition \eqref{eq:Dpq} and Proposition \ref{prop:equiv}). 

Since the problem is critical, the embeddings \eqref{eq:Sobolevembeddings} are not compact, and in general the dual functional does not satisfy the Palais-Smale condition. We prove however a compactness condition above a certain energy level (Lemma \ref{condition D}), which is based on a new class of Cherrier type inequalities, namely for every $\eps>0$ there exists $C(\eps)>0$ such that
\begin{equation*}
 \norm{u}_{\frac{N\eta}{N-2\eta}} \le  \left (\frac{2^{\frac2N}}{S} +\varepsilon\right ) \norm{\Delta u}_{\eta} + C(\varepsilon) \norm{u}_{W^{1,\eta}}, \quad \forall u \in W^{2, \eta}_\nu(\Omega)
 \end{equation*}
(see Theorem \ref{thm:Cherrier} for more details). Here we are inspired by \cite{BCN}, in which the case $\eta=2$ is shown.

\begin{figure}
 \centering 
\begin{subfigure}{0.47 \textwidth}
    \centering
 \includegraphics[width=\linewidth]{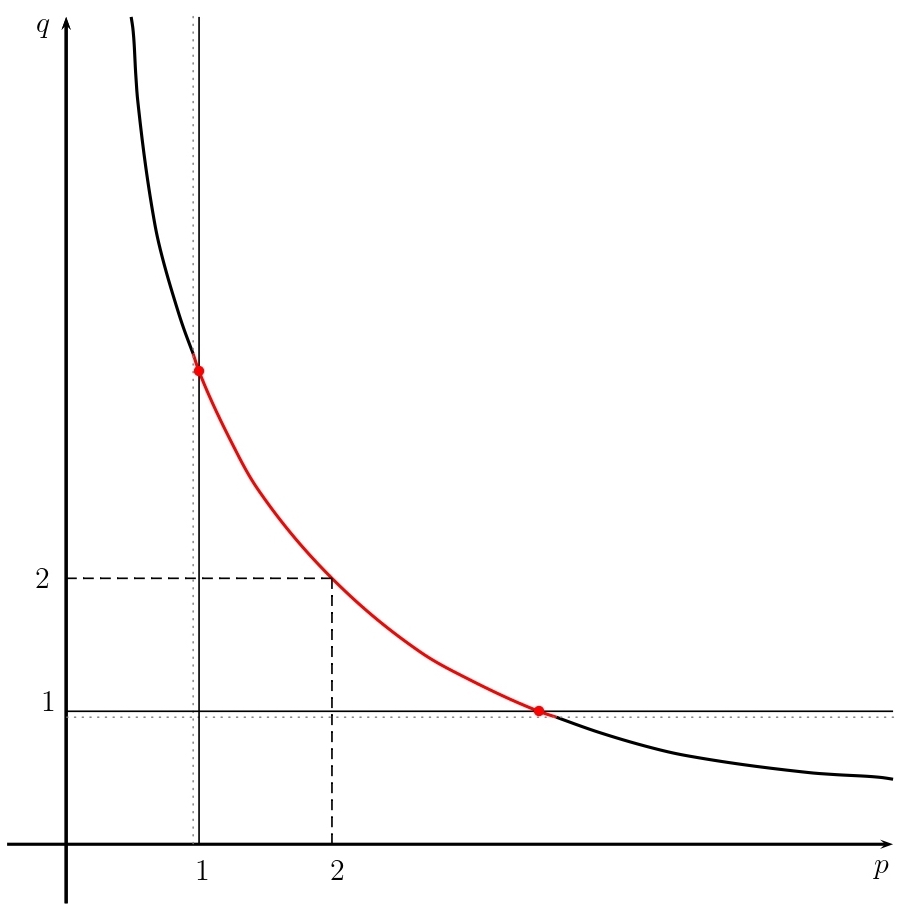}
\caption{Case $N=6$. }
\end{subfigure}%
\begin{subfigure}{0.49 \textwidth}
    \centering
  \includegraphics[width=\linewidth]{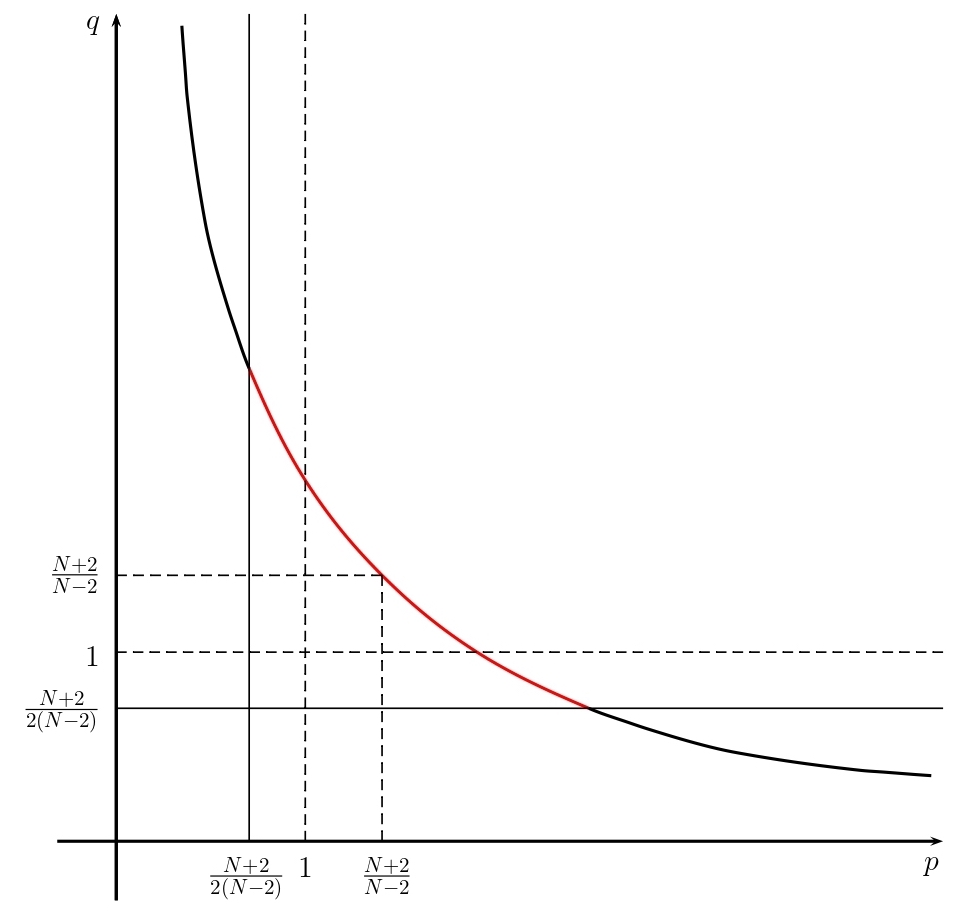}
\caption{Case $N\ge7$. }
\end{subfigure}
\caption{The red points on the critical hyperbola represent the values $(p, q)$ for which we prove existence of least energy solutions to \eqref{system}, see Theorem \ref{main thm}. Notice that in case $N=6$ we can also prove existence close to $p=1$ and $q=1$, see Theorem \ref{main thm1.2}. }\label{fig:Nge7}
\end{figure}

The way in which we check the compactness condition is where the proofs of Theorems \ref{main thm} and \ref{main thm1.2} are different. In the proof of Theorem \ref{main thm} we use a test function argument; we consider the problem in the whole space
\begin{equation}\label{eq:limitingsystem_intro}
-\Delta U = V^q  \text{ in } \R^N,\qquad
-\Delta V = U^p  \text{ in  } \R^N,
\end{equation}
which admits a family of solutions $(U_{x_0,\eps},V_{x_0,\eps})\in \mathcal{D}^{2,\frac{q+1}{q}}(\R^N)\times \mathcal{D}^{2,\frac{p+1}{p}}(\R^N)$, which are positive and radially decreasing with respect to $x_0\in \R^N$, and $\eps$ is a concentration parameter related to the scalings of the system, see Subsection \ref{sec:bubbles} for the details. Performing careful asymptotic estimates, under the assumptions of Theorem \ref{main thm} we prove the compatibility condition; these estimates are not at all straightforward mainly by two aspects: on the one hand $U_{x_0,\eps},V_{x_0,\eps}$ do not have an explicit expression and we only have access to their decay at infinity, and on the other hand we are dealing with a dual formulation. 

\begin{remark}
For convenience of the reader, we would also like to stress why we need to ask the  technical conditions \textit{(i)}, \textit{(ii)} and \textit{(iii)} in Theorem \ref{main thm}: condition $p,q>\frac{N+2}{2(N-2)}$ is needed right at the end of the proof of Lemma \ref{step3}. The hypothesis in \textit{(ii)} is equivalent to $p, q < N=5$, and it is more restrictive than $p,q>\frac{N+2}{2(N-2)}=\frac{7}{6}$. 
Also, the hypothesis in \textit{(iii)} is equivalent to $p, q < N=4$, and it is more restrictive than $p,q>\frac{N+2}{2(N-2)}=\frac{3}{2}$.  They are needed in the proof of Lemma \ref{step5}. It is an open question whether these conditions can be removed, and if there exist least energy nodal solutions for \emph{all} $p,q$ on the critical hyperbola \eqref{CH}.
\end{remark}

\begin{remark}
Notice that the case $N=3$ cannot be treated with our arguments, as for instance the hypothesis $p, q < N$, needed in the proof of Lemma \ref{step5}, is too restrictive if $N=3$, as no points on the hyperbola satisfy it. 
\end{remark}

\begin{remark}
We point out that $p=q=\frac{N+2}{N-2}$ satisfies the condition $q > \frac{N+2}{2(N-2)}$, and also the more restrictive conditions appearing in \textit{(ii)} and \textit{(iii)} if $N=4, 5$. This case (by \cite[Lemma 2.6]{ST2}) reduces to the equation with critical exponent 
\begin{equation}\label{eq:singleequationcase} - \Delta u = |u|^{\frac{4}{N-2}}u \text{  in }  \Omega,\qquad  u_\nu=0 \text{   on }  \partial \Omega, \end{equation}
and it is treated in \cite{CK} (where actually a more general operator, $Lw:=-\Delta w+\lambda w$, $\lambda\in \R$ is considered). The first part of our paper can be seen as an extension of  \cite{CK} to Lane-Emden systems. For completeness we point out that, for the single equation case \eqref{eq:singleequationcase},  the sublinear case is treated in \cite{PW}, while the subcritical superlinear case is shown in \cite[Corollary 1.4]{ST2}. The continuity of solutions with respect to $p$ is the object of \cite{ST}, while in \cite{PST} it is shown the existence of solutions in the slightly \emph{supercritical} case in some symmetric domains.
\end{remark}

In \cite{ST2} some symmetry and symmetry breaking results are proved if the domain is a ball or an annulus, and $p, q$ are subcritical. 
It is natural to wonder if such results can be extended to the critical case, and, in particular, if the least energy (nodal) solutions obtained in Theorem \ref{main thm} are radially symmetric and, if not, if there are radial solutions. 
Assume $\Omega$ satisfies
\begin{align}
\label{annulus} \Omega&=B_R(0)\setminus B_r(0), \qquad \text{ for some } 0<r<R, \qquad \text{ or }\\
\label{ball} \Omega&=B_R(0) \qquad \text{ for some } R>0.
\end{align} 
We let 
\[
c^{rad}_{p,q}:=\inf\{I(u,v):\ (u,v)\in W_{{rad}}^{2, \frac{q+1}{q}} (\Omega) \times W_{{rad}}^{2, \frac{p+1}{p}} (\Omega) \text{ strong solution of } \eqref{system}\};
\]
and we define a least energy (nodal) radial solution as a strong solution  $(u, v) \in W_{rad}^{2, \frac{q+1}{q}} (\Omega) \times W_{rad}^{2, \frac{p+1}{p}} (\Omega)$
 of \eqref{system}, such that $I(u,v)=c^{rad}_{p,q}$. 
 
We will prove the following: 
\begin{theorem}\label{thm:annulus}
We assume that $p, q$ satisfy \eqref{CH}. 
\begin{enumerate}
\item[(i)] If $\Omega$ is the annulus \eqref{annulus} then the set of least energy radial solutions of \eqref{system} is nonempty. Moreover, if $(u,v)$ is a least energy radial solution, then $u_rv_r>0$, that is, $u, v$ are strictly monotone in the radial variable, having the same monotonicity.

\item[(ii)] Moreover, let $\Omega$ as in \eqref{annulus} or \eqref{ball}, 
and let $(p,q)$ satisfy the assumptions of Theorem \ref{main thm}.
If $(u,v)$ is a least-energy nodal solution, then both $u$ and $v$ are not radially symmetric, and there exists $e^* \in \mathcal{S}^{N-1}$ such that $u, v$ are foliated Schwarz symmetric with respect to $e^*$ (see Section \ref{sec:symm} below for the definition). 
\end{enumerate}
\end{theorem}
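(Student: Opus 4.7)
The strategy for the two parts is different, but both rest on the dual variational formulation of Subsection \ref{subsec:variational}.

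\textbf{Part (i): existence on the annulus and radial monotonicity.} On an annulus, radial $W^{2,s}$ functions effectively depend on a single variable, so the embeddings $W^{2,s}_{rad}(\Omega) \hookrightarrow L^t(\Omega)$ are compact for every $s > 1$ and every $t \in [1,\infty)$. Restricting the dual functional and the Nehari-type characterization of Proposition \ref{prop:equiv} to radial functions therefore gives a functional satisfying the Palais--Smale condition, so the infimum $c_{p,q}^{rad}$ is attained at a strong radial solution $(u,v)$. The compatibility condition immediately forces both components to be nodal. To prove strict monotonicity, I write the system in radial variables as
\begin{equation*}
-u'' - \tfrac{N-1}{\rho} u' = |v|^{q-1}v, \qquad -v'' - \tfrac{N-1}{\rho} v' = |u|^{p-1}u,
\end{equation*}
with Neumann conditions at both endpoints, and argue by contradiction: if $u'v' \le 0$ on some subinterval, I reflect that piece around an interior zero of $u'$ (or $v'$), producing a new admissible radial pair whose $L^{p+1}$ and $L^{q+1}$ norms are preserved but whose dual energy is strictly smaller, contradicting minimality. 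A unique continuation argument on the elliptic system then excludes the degenerate case $u' \equiv 0$ or $v' \equiv 0$ on intervals of positive length.

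\textbf{Part (ii): foliated Schwarz symmetry and non-radiality.} The foliated Schwarz symmetry is obtained by polarization. For each $e \in \mathcal{S}^{N-1}$ set $H_e := \{x \cdot e \ge 0\}$ and let $u \mapsto u^{H_e}$ denote the associated two-point polarization. Since $\Omega$ is invariant under the reflection $\sigma_{H_e}$, polarization preserves the $L^t$-norms of each component, while
\begin{equation*}
\int_\Omega \nabla u^{H_e} \cdot \nabla v^{H_e} \le \int_\Omega \nabla u \cdot \nabla v,
\end{equation*}
with equality only if $(u,v)$ is either symmetric or antisymmetric with respect to $H_e$. Plugging this into the dual characterization of $c_{p,q}$ shows that any least-energy nodal solution must have this property for every such half-space; a standard continuity argument in $e$ then yields an axis $e^* \in \mathcal{S}^{N-1}$ of foliated Schwarz symmetry common to $u$ and $v$.

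To rule out radial symmetry I would compare the two least-energy levels. Any radial nodal solution is supported on at least two nodal shells; decomposing the energy across a nodal sphere and invoking the (dual) Sobolev-type inequality on each shell yields a lower bound of the form $c_{p,q}^{rad} \ge 2 c_\infty$, where $c_\infty$ is the whole-space ground-state energy for \eqref{eq:limitingsystem_intro}. On the other hand, the single-bubble test function used to verify Lemma \ref{condition D} gives $c_{p,q} < 2 c_\infty$ under the assumptions of Theorem \ref{main thm} (this is precisely the compactness threshold). Combining the two inequalities yields $c_{p,q} < c_{p,q}^{rad}$, so no least-energy nodal solution is radial, and the axis $e^*$ obtained above is nontrivial.

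The main obstacle I expect is the lower bound $c_{p,q}^{rad} \ge 2 c_\infty$: although each nodal shell carries a component of one sign only, the fact that the Neumann boundary touches only part of the boundary of each shell (with the nodal sphere imposing a Dirichlet-type condition) makes the sharp Sobolev-constant argument in the dual formulation delicate, especially in view of the asymmetric integrability of the two components. The polarization step, by now classical in the scalar case, also requires a non-trivial adaptation here because the quadratic form $\int_\Omega \nabla u \cdot \nabla v$ is sign-indefinite and couples two distinct function spaces.
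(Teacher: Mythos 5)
Your plan diverges from the paper's proof in several places, and the divergences are not harmless. In Part (i), the existence step via compact radial embeddings is correct, but your monotonicity argument (``reflect a piece around an interior zero of $u'$'') is too vague to close: the dual energy $\int_\Omega fKg$ is nonlocal, so reflecting a local piece of a radial function does not preserve the constraint nor clearly improve the objective, and the Neumann conditions are disturbed. The paper instead invokes the precise \emph{flip-and-rearrange} transformation $\divideontimes$ of \cite{ST2} (Theorem~\ref{thm:trans}), which is built exactly so that it preserves the $L^{\frac{p+1}{p}}$- and $L^{\frac{q+1}{q}}$-norms, weakly increases $\int_\Omega fKg$, and has its equality case force radial monotonicity of $Kf$ and $Kg$; your sketch provides no substitute for the equality analysis. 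The unique-continuation worry you raise is handled automatically by that transformation.

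In Part (ii) the gaps are more serious. First, you polarize the primal pair $(u,v)$ and assert $\int_\Omega \nabla u^{H_e}\cdot\nabla v^{H_e}\le \int_\Omega \nabla u\cdot\nabla v$. For the indefinite quadratic form this inequality is not available in general, and you do not show it; it is precisely the ``non-trivial adaptation'' you flag as an obstacle. The paper avoids this by polarizing the \emph{dual} variables $(f,g)$ and showing, via the auxiliary functions $w_1,w_2$ and the maximum principle, that $\int_\Omega gKf\le \int_\Omega g^H K f^H$, with the equality case yielding Lemma~\ref{lem sym}. Second, your route to non-radiality is a comparison $c_{p,q}^{rad}\ge 2c_\infty > c_{p,q}$. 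The numerics are off: the compactness condition $D_{p,q}>2^{2/N}/S_{p,q}$ translates via \eqref{eq:relation_energylevels} into $c_{p,q}<\tfrac{1}{N}S_{p,q}^{N/2}$, which is \emph{one half} of the whole-space ground-state energy $c_\infty=\tfrac{2}{N}S_{p,q}^{N/2}$ (a boundary bubble carries half the mass), not $2c_\infty$. Moreover, the lower bound $c_{p,q}^{rad}\ge 2c_\infty$ is asserted but not proved and is dubious for a Neumann annulus where radial solutions can also concentrate near the boundary; in fact you yourself flag it as the ``main obstacle.'' The paper does not use any such energy comparison: it derives non-radiality directly from the $\divideontimes$-transformation together with the characterization of $D_{p,q}$ and the regularity result (citing \cite[Theorem 4.1]{ST2}), making the argument independent of concentration thresholds.
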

\begin{remark}
Notice that if $p=q=2^*$, then there exist no radial solutions on the ball, whereas this is an open problem for the system, see Remark \ref{rmk:pohoz}. We also stress that continuity up to the boundary is crucial in the proof of Part \textit{(ii)} of Theorem \ref{thm:annulus}. This is easy to show for radially symmetric solutions on an annulus, whereas for a general bounded domain it is a consequence of Proposition \ref{reg2}. 
\end{remark}


\smallbreak

We conclude this introduction by mentioning some related problems for systems. As said before, the subcritical case of \eqref{system} with Neumann boundary conditions is considered in \cite{ST2}; for problems where the linear operator is instead $Lw:=-\Delta w + w$, we are aware of the papers \cite{AY, BSTilli, PR, RY, Zeng}. Observe however that in the latter case the situation is much different, as for instance positive solutions are allowed. On the other hand, in the case of Dirichlet boundary conditions, the critical problem does not admit solutions in general (see \cite{Mitidieri, VanderVorst}) and concentration results have been proved close to the critical hyperbola in \cite{ChoiKim, Guerra}. Finally, we point out that the nondegeneracy of the solutions $(U_{x_0,\eps},V_{x_0,\eps})$ in \eqref{eq:limitingsystem_intro} has been recently proved in \cite{FKP}, which in particular allowed to study a Brezis-Nirenberg type problem for Lane-Emden systems and slightly subcritical systems in \cite{Pistoia, KM} (see also \cite{LFMS}) and a Coron type problem in \cite{KimCoron}.

\medskip

This paper is organized as follows. We first set notation and describe the variational formulation of the problem, and we prove Proposition \ref{reg2}. In Section \ref{sec:suff} we give a suitable compactness condition, and in Section \ref{sec:proof} we prove Theorem \ref{main thm}. Section \ref{sec: close to 1} is devoted to the study of the biharmonic equation \eqref{biharmonic}, and to the proof of Theorem \ref{main thm1.2}. In Section \ref{sec:symm} we analyze the symmetry-breaking phenomenon on radially symmetric domains, and existence of solutions with radial symmetry in the annulus. Appendix \ref{app:A}  is devoted to the proof of a Cherrier type inequality, which is crucial in the proof of the compactness condition in Section \ref{sec:suff}, whereas Appendix \ref{app:B} contains some estimates which we exploit in the proof of Theorem \ref{main thm}.

\section{Preliminaries}
\subsection{Notation and variational setting}\label{subsec:variational}

One can use various variational settings to deal with the system \eqref{system}, see for instance the survey \cite{BST}.  For our purposes, the most convenient is to use the so called \emph{dual method}, which we describe in this section.

For $s>1$, we denote the $L^s(\Omega)$ and $W^{2,s}(\Omega)$ norms by $\|\cdot\|_s$ and $\|\cdot \|_{W^{2,s}}$ respectively. Whenever the integration domain is $\R^N$, we write it explicitly and denote the $L^2(\R^N)$ and $W^{2,s}(\R^N)$ norms by $\|\cdot \|_{L^s(\R^N)}$ and $\|\cdot \|_{W^{2,s}(\R^N)}$ respectively. We define the operator $K : X^s \to W^{2, s}(\Omega)$ such that $Kh:=u$ if and only if 
\[
-\Delta u =h  \text{ in } \Omega, \qquad
u_\nu=0  \text{ on } \partial \Omega,\qquad
\int_\Omega u=0, \]
where
\[ 
X^s=\left \{ f \in L^s: \, \int_\Omega f=0 \right \}. 
\]
The fact that $K$ is well defined and continuous is a consequence of the following regularity result, which is taken from \cite[Theorem and Lemma in page 143]{Rassias} (see also \cite[Theorem 15.2]{Agmon}).
\begin{lemma}\label{lemma:regularity}
 If $s>1$, $\Omega$ be a smooth bounded domain in $\R^N$, and $h\in X^s$. Then there is a unique strong solution $u\in W^{2,s}(\Omega)$ of 
 \begin{align}\label{Nprob}
  -\Delta u = h\quad \text{ in }\Omega,\qquad \partial_\nu u=0\quad \text{ on }\partial \Omega,\qquad \int_\Omega u = 0.
  \end{align}
Moreover, there exists $C(\Omega,s)=C>0$ such that $\|u\|_{W^{2,s}}\leq C\|h\|_s.$
\end{lemma}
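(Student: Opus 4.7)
The plan is to combine a Lax-Milgram argument at the $H^1$ level with the Agmon-Douglis-Nirenberg (ADN) $L^s$-estimates for elliptic boundary value problems satisfying the Lopatinski-Shapiro complementing condition (which the Neumann Laplacian does), and then to absorb the lower-order term via a contradiction-compactness argument based on the triviality of the kernel imposed by the zero-mean condition. Uniqueness is the easy warm-up: if $u_1,u_2\in W^{2,s}$ both solve \eqref{Nprob}, then $w:=u_1-u_2$ satisfies $-\Delta w=0$, $\partial_\nu w=0$ and $\int_\Omega w=0$, and testing against $w$ gives $\int_\Omega|\nabla w|^2=0$, so $w$ is constant and the zero-mean condition forces $w\equiv 0$.

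For existence, I would first produce a weak $H^1$-solution when $s\geq 2$. On $V:=\{u\in H^1(\Omega):\int_\Omega u=0\}$ the Poincaré-Wirtinger inequality makes $\|\nabla\cdot\|_{2}$ an equivalent norm, so the bilinear form $a(u,\varphi):=\int_\Omega\nabla u\cdot\nabla\varphi$ is continuous and coercive on $V$. For $h\in X^s\subset X^2$, Lax-Milgram produces a unique $u\in V$ with $a(u,\varphi)=\int_\Omega h\varphi$ for every $\varphi\in V$; the compatibility condition $\int_\Omega h=0$ extends the identity to every $\varphi\in H^1(\Omega)$, which encodes the Neumann boundary condition weakly. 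To upgrade to $W^{2,s}$-regularity I would invoke the ADN a priori estimate
\[
\|u\|_{W^{2,s}}\leq C\bigl(\|\Delta u\|_s+\|u\|_s\bigr),
\]
valid for every $u\in W^{2,s}(\Omega)$ with $\partial_\nu u=0$; combined with an approximation of $h$ by smooth zero-mean data, this would yield $u\in W^{2,s}$.

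The final step is to absorb $\|u\|_s$ via a standard contradiction-compactness argument: if no uniform constant worked, a normalized sequence $u_n$ with $\int_\Omega u_n=0$, $\partial_\nu u_n=0$, $\|u_n\|_{W^{2,s}}=1$ and $\|\Delta u_n\|_s\to 0$ would, by Rellich-Kondrachov together with the ADN estimate applied to $u_n-u_m$, converge in $W^{2,s}$ along a subsequence to some $u$ solving \eqref{Nprob} with $h\equiv 0$, whence $u\equiv 0$ by the uniqueness step; but the a priori estimate and the normalization force $\|u_n\|_s\geq c>0$ in the limit, a contradiction. The case $1<s<2$ would then follow by approximating $h\in X^s$ by smooth zero-mean data and using the estimate just obtained to show the associated solutions are Cauchy in $W^{2,s}$; alternatively one can argue by duality. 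The only genuinely technical ingredient is the ADN $L^s$-estimate for Neumann data, which is classical but nontrivial and which I would take directly from the references cited in the statement.
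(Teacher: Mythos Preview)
The paper does not prove this lemma at all: it is stated as a known result, taken verbatim from \cite[Theorem and Lemma in page 143]{Rassias} with a cross-reference to the Agmon--Douglis--Nirenberg estimates \cite[Theorem 15.2]{Agmon}. Your proposal is therefore not competing with a proof in the paper but rather sketching how the cited result is established.

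As a sketch it is correct and standard. A minor point: in the uniqueness step, when $s$ is small $w\in W^{2,s}$ need not lie in $H^1$ a priori, so testing against $w$ is not immediate; however, since $-\Delta w=0\in L^r$ for every $r$, a bootstrap via the ADN estimate (or Weyl's lemma plus boundary regularity) makes $w$ smooth, after which the integration by parts is legitimate. With that caveat, your Lax--Milgram existence, ADN upgrade, and contradiction--compactness absorption of the lower-order term are exactly the ingredients behind the references the paper cites, so you are essentially reproducing the classical proof rather than offering a different route.
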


Also, we define $K_t : X^{\frac{t+1}{t}}(\Omega)  \rightarrow W^{2, \frac{t+1}{t}}(\Omega)$ given by 
\begin{equation} \label{eq:def_kappap}
K_t h=K h+ \kappa_t(h) \, \text{ for some } \, \kappa_t(h) \in \R \, \text{ such that } \, \int_\Omega | K_t h|^{t-1} K_t h =0. 
\end{equation}
For
\[ 
\alpha=\frac{p+1}p, \quad \beta=\frac{q+1}q, 
\]
we also define the space 
\[
X=X^\alpha \times X^\beta 
\]
and the dual functional $\Phi:X\to \R$ by
\begin{align*}
 \Phi(f, g)&=\frac p{p+1} \int_\Omega \abs{f}^{\frac{p+1}p} + \frac q{q+1}  \int_\Omega \abs{g}^{\frac{q+1}q}- \int_\Omega g Kf=\frac{1}{\alpha}\|f\|_\alpha^\alpha  + \frac{1}{\beta}\|g\|_\beta^\beta- \int_\Omega g Kf.
 \end{align*}

We also set 
\[
\gamma_1:=\frac{\beta}{\alpha+\beta}=\frac{p(q+1)}{2pq+p+q},\quad \gamma_2:= \frac{\alpha}{\alpha+\beta}=\frac{q(p+1)}{2pq+p+q},\quad\gamma:=\gamma_1 \alpha =\gamma_2 \beta=\frac{(p+1)(q+1)}{2pq+p+q},
\] 
which are such that
\[
\gamma_1+\gamma_2=1 \quad \text{ and }\quad  \frac{1}{\alpha}+\frac{1}{\beta}=\frac{1}{\gamma}.
\]
For future reference, we also point out that
\begin{equation}\label{eq:exponent_of_scaling}
\frac{1}{p+1}+\frac{1}{q+1}=\frac{N-2}{N}\implies \frac{(p+1)(q+1)}{pq-1}=\frac{N}{2}
\end{equation} 
and that a direct consequence of these definitions and Young's inequality is that:
\begin{equation}\label{Young}
(\|f\|_\alpha \|g\|_\beta)^\gamma \leq \gamma_1 \|f\|_\alpha^\alpha+\gamma_2 \|g\|_\beta^\beta\qquad \text{ for every } (f,g)\in X.
\end{equation}

Let \[ L_{p,q}=\inf \{ \Phi(f, g): (f, g) \in X \setminus \{(0, 0)\}, \quad \Phi'(f, g)=0 \}. \]
\begin{lemma}\label{lemma:same}
 Let $(f,g)\in X$ be a critical point of $\phi$ and let $(u,v):=(K_{p} g,K_{q} f)=(| f|^{\frac1p- 1}  f,|g|^{\frac1q- 1} g)$.  Then:
 \begin{enumerate}
 \item $(u,v)\in W^{2, \frac{q+1}{q}} (\Omega)  \times W^{2, \frac{p+1}{p}} (\Omega) $, and it is a strong solution of \eqref{system};
\item $\displaystyle \phi(f,g)=I(u,v):=\int_\Omega \nabla u\cdot \nabla v - \frac{1}{p+1}|u|^{p+1}-\frac{1}{q+1}|v|^{q+1}\, dx$.
\end{enumerate}
In particular, $(f,g)$ achieves $L_{p,q}$ if, and only if, $(u,v)$ is a least energy solution of \eqref{system}, and
\[
c_{p,q}=L_{p,q}.
\]
\end{lemma}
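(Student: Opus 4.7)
The plan is to derive the Euler--Lagrange equations for $\Phi$ restricted to $X=X^\alpha\times X^\beta$, match them with the system \eqref{system} via the correspondence $(f,g)\leftrightarrow(u,v)$, and then compute the two energies side by side. I would begin by varying $\Phi$ in the $f$-direction along test functions $\phi\in X^\alpha$ (i.e.\ with zero mean): using the symmetry of $K$ on zero-mean functions, which is proved by integrating by parts the identities $-\Delta Kg=g$ and $-\Delta K\phi=\phi$ under Neumann boundary conditions, one obtains
\[
\int_\Omega \bigl(|f|^{\alpha-2}f - Kg\bigr)\phi\,dx = 0 \qquad \forall \phi\in X^\alpha.
\]
Since $\phi$ ranges over all mean-zero functions, this forces $|f|^{\alpha-2}f - Kg$ to be a constant $c_1\in\R$. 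Writing $u:=|f|^{1/p-1}f$ (so that $|u|^{p-1}u=f$), and using $f\in X^\alpha$, we get $\int_\Omega |u|^{p-1}u=\int_\Omega f=0$. Hence the constant $c_1$ is uniquely determined by the normalization \eqref{eq:def_kappap}, giving $u=Kg+\kappa_p(g)=K_p g$. The analogous computation with $\psi\in X^\beta$ (no symmetrization needed since $\int \psi Kf$ is already of the desired form) yields $v:=|g|^{1/q-1}g=K_q f$.

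Once the identification is done, the verification of part (1) is immediate: from $u=K_p g$ we have $-\Delta u=-\Delta(Kg)=g=|v|^{q-1}v$ in $\Omega$ and $u_\nu=0$ on $\partial\Omega$; symmetrically for $v$. The required regularity $(u,v)\in W^{2,(q+1)/q}\times W^{2,(p+1)/p}$ is a direct application of Lemma \ref{lemma:regularity} to $g\in X^\beta$ and $f\in X^\alpha$, respectively, noting that adding a constant does not affect $W^{2,s}$-regularity.

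For part (2), I would compute both sides algebraically. Rewriting $|f|=|u|^p$ and $|g|=|v|^q$ gives $\|f\|_\alpha^\alpha=\int_\Omega |u|^{p+1}$ and $\|g\|_\beta^\beta=\int_\Omega |v|^{q+1}$. For the cross term, since $Kf=v-\kappa_q(f)$ and $\int_\Omega g=0$, one has $\int_\Omega gKf=\int_\Omega gv=\int_\Omega |v|^{q+1}$, and by the same argument also $\int_\Omega gKf=\int_\Omega fKg=\int_\Omega |u|^{p+1}$, so the two quantities coincide. On the other side, testing the first equation of \eqref{system} with $v$ and integrating by parts (legitimate by the regularity just established) gives $\int_\Omega \nabla u\cdot \nabla v=\int_\Omega |v|^{q+1}=\int_\Omega |u|^{p+1}$. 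Plugging into $\Phi$ and $I$ shows that both equal
\[
\frac{pq-1}{(p+1)(q+1)}\int_\Omega |u|^{p+1},
\]
whence $\Phi(f,g)=I(u,v)$.

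Finally, to deduce $c_{p,q}=L_{p,q}$, I would note that the map $(f,g)\mapsto (u,v)=(|f|^{1/p-1}f,|g|^{1/q-1}g)$ is a bijection onto its image with inverse $(u,v)\mapsto (|u|^{p-1}u,|v|^{q-1}v)$: given a strong solution $(u,v)$, setting $f:=|u|^{p-1}u$ and $g:=|v|^{q-1}v$ yields $f\in L^\alpha$, $g\in L^\beta$ by the Sobolev embeddings \eqref{eq:Sobolevembeddings}, while $\int_\Omega f=\int_\Omega(-\Delta v)=0$ (and similarly for $g$) by Neumann boundary conditions, so $(f,g)\in X$; reversing the computation above shows $(f,g)$ is a critical point of $\Phi$ with $\Phi(f,g)=I(u,v)$. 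The equality of least energy levels follows by taking infima on both sides. The only mildly subtle point in the whole argument is the careful tracking of the Lagrange-multiplier constants arising from the zero-mean constraint and their identification with $\kappa_p(g)$ and $\kappa_q(f)$; everything else is routine algebra once that bookkeeping is in place.
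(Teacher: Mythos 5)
Your proof is correct and, as far as one can tell, reproduces the argument of \cite[Lemmas~2.3 and~2.5]{ST2}, to which the paper itself simply defers without repeating the details. The derivation of the Euler--Lagrange equations with the Lagrange-multiplier constant identified as $\kappa_p(g)$ (resp.\ $\kappa_q(f)$), the algebraic matching of $\Phi(f,g)$ and $I(u,v)$ through $\int_\Omega|u|^{p+1}=\int_\Omega|v|^{q+1}=\int_\Omega gKf$, and the bijection between critical points of $\Phi$ and strong solutions are exactly the standard dual-method steps; the only thing to watch, which you correctly flag, is the bookkeeping of the mean-zero constraint and the resulting constants.
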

\begin{proof}
Item 1. follows by reasoning exactly as in from in \cite[Lemma 2.3]{ST2}, while Item 2. follows from  \cite[Lemma 2.5]{ST2}.
\end{proof}

Therefore our goal, from now on, is to show that $L_{p,q}$ is achieved. In order to show it, we use an equivalent variation formulation. Let us define
\begin{equation}\label{eq:Dpq}
D_{p, q} =\sup \left \{ \int_\Omega f Kg: \quad  (f, g) \in X, \quad \gamma_1 \norm{f}_\alpha^\alpha + \gamma_2 \norm{g}_\beta^\beta =1 \right \}
\end{equation}
Observe that $D_{p,q}<\infty$ since for every $(f,g)\in X$ such that $\gamma_1 \norm{f}_\alpha^\alpha + \gamma_2 \norm{g}_\beta^\beta =1$ we have 
\[
|\int_\Omega fKg| \leq \|f\|_\alpha\|K g\|_{\alpha^{\prime}} \leq C_1\|f\|_\alpha\|K g\|_{W^{2, \beta}} \leq C_2\|f\|_\alpha\|g\|_\beta \leq C_2 (\gamma_1 \|f\|_\alpha^\alpha+\gamma_2 \|g\|_\beta^\beta)^\frac{1}{\gamma}=C_2
\]
where we have used \eqref{eq:Sobolevembeddings}, Lemma \ref{lemma:regularity} and \eqref{Young}.

This level has the following equivalent characterizations, which are useful later on.
\begin{lemma}\label{lemma:equivalent_def_D}We have
\begin{align}
\label{def D eq 1}
 D_{p,q}				&=\sup_{(f,g)\in X\setminus \{(0,0)\}} \frac{\displaystyle\int_\Omega fKg}{\left(\gamma_1 \norm{f}_\alpha^\alpha + \gamma_2 \norm{g}_\beta^\beta\right)^{\frac{1}{\gamma}}}\\
 \label{def D eq 2}
				&=\mathop{\sup_{(f,g)\in X}}_{f,g\neq 0}\frac{\displaystyle \int_\Omega f Kg}{\|f\|_\alpha \|g\|_\beta}.
 \end{align}
 \end{lemma}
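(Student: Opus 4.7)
The proof of both identities is an elementary exercise combining scale invariance with Young's inequality \eqref{Young}. My plan is to establish each equality by exhibiting an explicit rescaling that moves admissible competitors between the three formulations. Before starting, I would observe that every supremum is visibly nonnegative (the pair $(0,0)$ is admissible in the original definition, and flipping $f\mapsto -f$ preserves $\|f\|_\alpha$ while changing the sign of $\int_\Omega fKg$), so it suffices to work with pairs $(f,g)$ satisfying $f,g\ne 0$ and $\int_\Omega fKg\ge 0$.

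For \eqref{def D eq 1}, one direction is immediate: any $(f,g)$ meeting the constraint $\gamma_1\|f\|_\alpha^\alpha+\gamma_2\|g\|_\beta^\beta=1$ has denominator equal to $1$, so the ratio in \eqref{def D eq 1} simply equals $\int_\Omega fKg$, giving $D_{p,q}\le \sup\eqref{def D eq 1}$. For the reverse, given $(f,g)$ with $f,g\ne 0$, I would normalize by setting $(\tilde f,\tilde g):=(f/\|f\|_\alpha,\,g/\|g\|_\beta)$. Then $\gamma_1\|\tilde f\|_\alpha^\alpha+\gamma_2\|\tilde g\|_\beta^\beta=\gamma_1+\gamma_2=1$, so $(\tilde f,\tilde g)$ is admissible for $D_{p,q}$, and from \eqref{Young}
\[
\int_\Omega \tilde fK\tilde g\;=\;\frac{\int_\Omega fKg}{\|f\|_\alpha\|g\|_\beta}\;\ge\; \frac{\int_\Omega fKg}{\bigl(\gamma_1\|f\|_\alpha^\alpha+\gamma_2\|g\|_\beta^\beta\bigr)^{1/\gamma}},
\]
which yields $D_{p,q}\ge \sup\eqref{def D eq 1}$.

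For \eqref{def D eq 2}, the same normalization $(\tilde f,\tilde g)$ is admissible for $D_{p,q}$ and satisfies $\int_\Omega \tilde fK\tilde g=\int_\Omega fKg/(\|f\|_\alpha\|g\|_\beta)$, which shows $\sup\eqref{def D eq 2}\le D_{p,q}$. Conversely, given $(f,g)$ on the constraint, Young's inequality \eqref{Young} gives $\|f\|_\alpha\|g\|_\beta\le\bigl(\gamma_1\|f\|_\alpha^\alpha+\gamma_2\|g\|_\beta^\beta\bigr)^{1/\gamma}=1$, and hence (using $\int_\Omega fKg\ge 0$) we get $\int_\Omega fKg\le \int_\Omega fKg/(\|f\|_\alpha\|g\|_\beta)$, which gives $D_{p,q}\le \sup\eqref{def D eq 2}$. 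I do not anticipate any substantive obstacle: the argument is bookkeeping, and the reason the three formulations genuinely agree (rather than being related by strict inequalities) is exactly that Young's inequality in \eqref{Young} is sharp, achieving equality whenever $\|f\|_\alpha^\alpha=\|g\|_\beta^\beta$, which is precisely the case for the normalized pair $(\tilde f,\tilde g)$.
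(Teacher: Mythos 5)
Your proof is correct, and it takes a genuinely different route from the paper's, at least for \eqref{def D eq 2}. For \eqref{def D eq 1} the paper uses the scaling $\tilde f = f/(\gamma_1\|f\|_\alpha^\alpha+\gamma_2\|g\|_\beta^\beta)^{1/\alpha}$, $\tilde g = g/(\gamma_1\|f\|_\alpha^\alpha+\gamma_2\|g\|_\beta^\beta)^{1/\beta}$, which (by bilinearity of $\int_\Omega fKg$ and $\tfrac{1}{\alpha}+\tfrac{1}{\beta}=\tfrac{1}{\gamma}$) makes $\int_\Omega \tilde f K\tilde g$ \emph{exactly equal} to the ratio in \eqref{def D eq 1}; that chain of equalities needs no sign assumption on $\int_\Omega fKg$. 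You instead normalize by $(f/\|f\|_\alpha,\,g/\|g\|_\beta)$ and then squeeze with Young's inequality \eqref{Young}, which requires the preliminary reduction to $\int_\Omega fKg\ge 0$ (your flipping argument does handle this, since $\int_\Omega(-f)Kg=-\int_\Omega fKg$ with unchanged norms, and the supremum is positive). The real gain of your approach appears in \eqref{def D eq 2}: by sticking with the same normalization, both inequalities fall out of \eqref{Young} in one line each, and you entirely bypass the paper's case distinction between $p=q$ and $p\neq q$ and the one-parameter minimization of $F(t)=\gamma_1 t^{\alpha(1-2\gamma_1)}\|f\|_\alpha^\alpha+\gamma_2 t^{\beta(1-2\gamma_2)}\|g\|_\beta^\beta$. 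Your closing remark correctly identifies why this works: the equality case of \eqref{Young} occurs precisely when $\|f\|_\alpha^\alpha=\|g\|_\beta^\beta$, which is forced by your normalization, so nothing is lost.
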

 \begin{proof}
 Proof of \eqref{def D eq 1}: given $(f,g)\in X\setminus\{(0,0)\}$, take
 \[
 \tilde f:=\frac{f}{(\gamma_1\|f\|_\alpha^\alpha+\gamma_2\|g\|_\beta^\beta)^\frac{1}{\alpha}},\qquad  \tilde g:=\frac{g}{(\gamma_1\|f\|_\alpha^\alpha+\gamma_2\|g\|_\beta^\beta)^\frac{1}{\beta}},
 \]
 Then $\gamma_1\|\tilde f\|_\alpha^\alpha+\gamma_2\|\tilde g\|_\beta^\beta=1$, and
 \[
 D_{p,q}\geq \int_\Omega \tilde fK \tilde g=\frac{\displaystyle \int_\Omega fKg}{\left(\gamma_1 \norm{f}_\alpha^\alpha + \gamma_2 \norm{g}_\beta^\beta\right)^{\frac{1}{\gamma}}}.
 \]
 Taking the supremum in $f,g$, we obtain one inequality in \eqref{def D eq 1}. Conversely, given $(\bar f,\bar g)\in X$ such that $\gamma_1 \norm{\bar f}_\alpha^\alpha + \gamma_2 \norm{\bar g}_\beta^\beta =1$, we have
 \[
 \int_\Omega \bar f K \bar g=\frac{\displaystyle \int_\Omega \bar f K \bar g}{\left(\gamma_1 \norm{\bar f}_\alpha^\alpha + \gamma_2 \norm{\bar g} _\beta^\beta\right)^{\frac{1}{\gamma}}}\leq \sup_{(f,g)\in X\setminus \{(0,0)\}}\frac{\displaystyle \int_\Omega fKg}{\left(\gamma_1 \norm{f}_\alpha^\alpha + \gamma_2 \norm{g}_\beta^\beta\right)^{\frac{1}{\gamma}}}.
 \]
 
 \smallbreak
 
  Proof of \eqref{def D eq 2}: Being a positive supremum, clearly we may take in the characterization of $D_{p,q}$ only pairs $(f,g)$ such that $f,g\neq 0$. Young's inequality \eqref{Young} immediately shows that 
\[
 D_{p,q}	\leq \sup_{(f,g)\in X\setminus \{(0,0)\}}\frac{\displaystyle \int_\Omega f Kg}{\|f\|_\alpha \|g\|_\beta}.
\]
To prove the inverse inequality, observe that if $p=q$, then $\alpha=\beta$ and it is obvious. Thus we focus on the case $p\neq q$. We use as test function $(tf,tg)$ for $t>0$ and $(f,g)\in X$, $f,g\neq 0$, and $t>0$, obtaining:
\[
\frac{t^{2\gamma} \left(\displaystyle \int_\Omega fKg\right)^\gamma}{\gamma_1 t^\alpha \|  f\|_\alpha^\alpha + \gamma_2 t^\beta\norm{ g}_\beta^\beta}\leq (D_{p,q})^\gamma
\]
or, equivalently,
\[
\left(\frac{\int_\Omega fKg}{D_{p,q}}\right)^\gamma\leq \gamma_1 t^{\alpha(1-2\gamma_1)} \| f\|_\alpha^{\alpha} + \gamma_2 t^{\beta(1 -2\gamma_2)} \norm{g}_\beta^{\beta}=:F(t)
\]
Recall that $\gamma_1+\gamma_2=1$ and that (since $p\neq q$), $\gamma_1,\gamma_2\neq 1/2$. Therefore $(1-2\gamma_1)(1-2\gamma_2)<0$, and $F$ has a strict local minimum attained at:
\[
t_*:=\left(\frac{\|f\|_\alpha^\alpha}{\|g\|_\beta^\beta}\right)^\frac{1}{\beta-\alpha},\quad \text{ with } F(t_*)=\|f\|_\alpha^\gamma\|g\|_\beta^\gamma.
\]
This concludes the proof.
 \end{proof}

\subsection{Sobolev constants}\label{sec:bubbles}
For $\eta>1$, we denote by $\mathcal{D}^{2, \eta}(\R^N)$ the completion of $C^\infty_c(\R^N)$ with respect to the norm $\|\Delta u\|_{L^\eta(\R^N)}$. 
We define 
\begin{equation}\label{eq:Spq}
S_{p,q}:= \inf\left\{\|\Delta u\|_{L^\beta(\R^N)}:\ u\in \mathcal{D}^{2,\frac{q+1}{q}},\ \|u\|_{L^{p+1}(\R^N)}=1 \right\},
\end{equation} 
which is the best Sobolev constant for the embedding $\mathcal{D}^{2, {\frac{q+1}{q}}}(\R^N) \hookrightarrow L^{p+1}(\R^N)$, namely the best constant such that one has
\[  S_{p, q} \norm{u}_{L^{p+1}(\R^N)} \le \norm{\Delta u}_{L^\beta(\R^N)} \qquad \text{ for any $u \in \mathcal{D}^{2, {\frac{q+1}{q}}}(\R^N)$}. \]

It is known \cite[Corollary I.2, p. 165]{Lions} that $S_{p,q}$ is achieved. In particular, using also the relation \eqref{eq:exponent_of_scaling}, there exists $U$ nonnegative and radially decreasing such that
\begin{equation}\label{eq:Spq_achieved}
S_{p,q}^\frac N2 =\norm{\Delta U}_{L^\beta(\R^N)}^{\beta} = \norm{U}_{L^{p+1}(\R^N)}^{p+1}
\end{equation}
and
\[
-\Delta ((-\Delta U)^{\frac1q}) = U^p \text{ in } \R^N.
\]
Therefore, if $V:=(-\Delta U)^{\frac1q}$, then
\begin{equation}\label{eq:limitingsystem}
-\Delta U = V^q  \text{ in } \R^N,\qquad
-\Delta V = U^p  \text{ in  } \R^N.
\end{equation}
One can take $U,V$ to be positive and radially decreasing. Since $\norm{U}_{L^{p+1}(\R^N)}^{p+1}=\norm{V}_{L^{q+1}(\R^N)}^{q+1}$, then  $S_{p,q}=S_{q,p}$, the best constant in the embedding $\mathcal{D}^{2, {\frac{p+1}{p}}}(\R^N) \hookrightarrow L^{q+1}(\R^N)$. Observe that, by the homogeneity of the nonlinearities and since $p,q$ satisfy \eqref{CH}, then we have a whole family of solutions to \eqref{eq:limitingsystem}, namely
\begin{equation}\label{eq:familybubbles}
\begin{cases}
U_{\varepsilon, x_0} (x)= \eps^{-\frac{2(q+1)}{pq-1}}U\left(\frac{x-x_0}{\eps}\right)=\varepsilon^{-\frac N{p+1}} U \left(\frac{x-x_0}{\varepsilon} \right),\\V_{\varepsilon, x_0} (x)=\eps^{-\frac{2(p+1)}{pq-1}}V\left(\frac{x-x_0}{\eps}\right)=\varepsilon^{-\frac N{q+1}} V \left(\frac{x-x_0}{\varepsilon} \right) 
\end{cases}
\end{equation}
 and
  \[ S_{p, q}^{\frac N2} =\norm{\Delta U_{\eps, x_0}}_{L^\beta(\R^N)}^{\beta} = \norm{U_{\eps, x_0}}_{L^{p+1}(\R^N)}^{p+1}. \]
It is proved in \cite{HV} that these are all the positive solutions of \eqref{eq:limitingsystem}. We just mention that there exist also sign-changing solutions to \eqref{eq:limitingsystem}, see \cite{ClappSaldana}. Without loss of generality, we assume that 
\begin{equation*}
V(0)=1,\qquad \text{ and that } \qquad q\leq \frac{N+2}{N-2}.
\end{equation*}

In the following we do a slight abuse of notation and use $w(|x|)=w(x)$ for a radial function $w$. By \cite[Theorem 2]{HV}, we have that there exists $a,b>0$ such that
\begin{equation} \label{eq:decayestimate}
\lim _{r \rightarrow \infty} r^{N-2} V(r)=a,\qquad \begin{cases}
\lim _{r \rightarrow \infty} r^{N-2} U(r)=b  & \text { if } q>\frac{N}{N-2} \\ 
\lim _{r \rightarrow \infty} \frac{r^{N-2}}{\log r} U(r)=b  & \text { if } q=\frac{N}{N-2} \\ 
\lim _{r \rightarrow \infty} r^{q(N-2)-2} U(r)=b  & \text { if } q<\frac{N}{N-2}
\end{cases}
\end{equation}
Based on these estimates, in \cite[pp. 2360--2362]{HMV} it is shown that, as $\eps\to 0^+$:
\begin{equation}\label{eq:norm_estimates}
\|V_\eps\|_{L^1(\R^N)}\approx \|U_\eps^p\|_{L^1(\R^N)}\approx \eps^\frac{N}{p+1},\quad \|U_\eps\|_{L^1(\R^N)}\approx \|V_\eps^q\|_{L^1(\R^N)} \approx \begin{cases}
\eps^\frac{N}{q+1} & \text{ if } \frac{N}{N-2}<q\leq \frac{N+2}{N-2},\\
\eps^\frac{N(N-2)}{2(N-1)}|\log \eps| & \text{ if } q=\frac{N}{N-2},\\
\eps^\frac{qN}{p+1} & \text{ if } q<\frac{N}{N-2} 
\end{cases}
\end{equation}
where $f \approx g$ means that the quotient there exists $C>1$ such that $1/C\leq f/g\leq C$. Notice that in \cite{HMV} it is also assumed $p, q>1$, however, an inspection of the proof shows that the estimates \eqref{eq:norm_estimates} are true without this restriction.
We also point out, for future reference, that from  \cite[equations (3.22)--(3.24)]{HV} we have
\begin{equation}\label{eq:decayestimates2}
rU'(r)\approx U(r),\qquad rV'(r)\approx V(r).
\end{equation}

We also need to introduce the best Sobolev constant $S_*$ for the embedding $\mathcal{D}^{1, 2}(\R^N) \hookrightarrow L^{2^*}(\R^N)$, namely
\[ S_* \|u\|_{L^{2^*}(\R^N)}^2 \leq \|\nabla u\|_{L^2(\R^N)}^2. \]
Associated with it we have the family of \emph{bubbles}:
\[
U^*_{x_0,\eps}(x):=\eps^{-\frac{N-2}{2}}U^*\left(\frac{x-x_0}{\eps}\right),\quad \text{ with } \quad  U^*(x):=\frac{(N(N-2))^\frac{N-2}{4}}{(1+|x|^2)^\frac{N-2}{2}}
\] 
which are the unique solutions in $\mathcal{D}^{1,2}(\R^N)$ of 
\[ -\Delta U^*_{\eps, x_0} = (U^*_{\eps, x_0}) ^{2^*-1}  \text{ in } \R^N. \]
They satisfy
\[ \norm{ \nabla U^*_{\eps, x_0} }_{L^2(\R^N)}^2 = \norm{U^*_{\eps, x_0} }_{L^{2^*}(\R^N)}^{2^*}=S_*^{\frac N2}. \]
\begin{remark}
Observe that, for 
\[
p=q=2^*-1=\frac{N+2}{N-2}, \quad  \text{ we have } U^*=U=V,\quad \text{ and so } S_{\frac{N+2}{N-2},\frac{N+2}{N-2}}=S_*. 
\] 
\end{remark}
\subsection{Equivalence between $D_{p,q}$ and the least energy nodal level}

\begin{proposition}\label{prop:equiv}
Assume $D_{p, q}$ is attained at $(f, g)$. Then $(u, v)=((D_{p,q})^{-q \frac{p+1}{pq-1}} K_p g, (D_{p,q})^{-p \frac{q+1}{pq-1}} K_q f)$ is a least energy solution for \eqref{system}, and
\begin{equation} \label{eq:relation_energylevels}
D_{p,q}^{-\frac{N}{2}}=\frac{N}{2} L_{p, q}=\frac{N}{2} c_{p, q}. 
\end{equation}
\end{proposition}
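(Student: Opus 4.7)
The plan is to treat this as a Lagrange multiplier problem and then rescale. Let $(f,g)$ attain $D_{p,q}$ under the constraint $\gamma_1\|f\|_\alpha^\alpha+\gamma_2\|g\|_\beta^\beta=1$. Writing the Euler-Lagrange conditions on the constrained manifold (noting that test directions must lie in $X^\alpha$ and $X^\beta$, i.e.\ have zero mean), and using that $K$ is self-adjoint in the sense $\int hKg=\int gKh$ (by integration by parts, recalling the Neumann condition), I would obtain constants $c_1,c_2\in\R$ and a Lagrange multiplier $\mu$ with
\[
Kg=\mu\gamma\,|f|^{\alpha-2}f+c_1,\qquad Kf=\mu\gamma\,|g|^{\beta-2}g+c_2.
\]
Testing the first equation with $f$ (which has zero mean) gives $D_{p,q}=\int fKg=\mu\gamma\|f\|_\alpha^\alpha$ and similarly $D_{p,q}=\mu\gamma\|g\|_\beta^\beta$; combined with the constraint this yields $\|f\|_\alpha^\alpha=\|g\|_\beta^\beta=1$ and $\mu\gamma=D_{p,q}$. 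By the very definition \eqref{eq:def_kappap} of $K_p,K_q$, the choice of additive constant is fixed by the zero-mean-of-signed-power requirement; checking that $\phi_p(D_{p,q}|f|^{\alpha-2}f)=D_{p,q}^p f$ has zero mean (since $f\in X^\alpha$), I conclude that the unique constants are such that
\[
K_p g = D_{p,q}\,|f|^{\alpha-2}f,\qquad K_q f = D_{p,q}\,|g|^{\beta-2}g.
\]

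Next I would seek scalings $a,b\in\R$ so that $u:=D_{p,q}^a K_p g$ and $v:=D_{p,q}^b K_q f$ solve \eqref{system}. Since $-\Delta K_p g=g$ (adding a constant does not affect the Laplacian) and $g=|v|^{q-1}v/D_{p,q}^{q(b+1)}$ (inverting the relation $K_q f = D_{p,q}|g|^{\beta-2}g$ through $\beta-1=1/q$ and the $b+1$ scaling), the equation $-\Delta u=|v|^{q-1}v$ forces $a=q(b+1)$; symmetrically $b=p(a+1)$. Solving this linear system gives exactly the exponents $a=-q(p+1)/(pq-1)$ and $b=-p(q+1)/(pq-1)$ claimed in the statement. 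The Neumann boundary condition on $(u,v)$ is automatic since $K_p g$ and $K_q f$ inherit it from $K$.

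Finally, to obtain \eqref{eq:relation_energylevels}, I would pass through the critical point $(f^*,g^*):=(|u|^{p-1}u,|v|^{q-1}v)$ of $\Phi$ produced by Lemma \ref{lemma:same} from this $(u,v)$; using the scaling and $\|f\|_\alpha^\alpha=\|g\|_\beta^\beta=1$ one checks $(f^*,g^*)=(D_{p,q}^b f, D_{p,q}^a g)$. A direct computation of
\[
\Phi(f^*,g^*)=\tfrac{1}{\alpha}D_{p,q}^{b\alpha}+\tfrac{1}{\beta}D_{p,q}^{a\beta}-D_{p,q}^{a+b+1}
\]
then shows that each exponent equals $-N/2$ after invoking the critical identity \eqref{eq:exponent_of_scaling}, so
\[
\Phi(f^*,g^*)=\bigl(\tfrac{1}{\alpha}+\tfrac{1}{\beta}-1\bigr)D_{p,q}^{-N/2}=\bigl(\tfrac{1}{\gamma}-1\bigr)D_{p,q}^{-N/2}=\tfrac{2}{N}D_{p,q}^{-N/2}.
\]
This gives $L_{p,q}\le \tfrac{2}{N}D_{p,q}^{-N/2}$. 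For the matching lower bound, I would take any nontrivial critical point $(\bar f,\bar g)$: using $\bar f=|\bar u|^{p-1}\bar u$, $\bar g=|\bar v|^{q-1}\bar v$ with $\bar u=K_p\bar g,\bar v=K_q\bar f$, the quantity $M:=\int\bar f K\bar g$ coincides with both $\|\bar f\|_\alpha^\alpha$ and $\|\bar g\|_\beta^\beta$, so $\Phi(\bar f,\bar g)=(2/N)M$ and the ratio in the characterization \eqref{def D eq 1} equals $M^{-2/N}$, forcing $M\ge D_{p,q}^{-N/2}$. Thus $L_{p,q}=\tfrac{2}{N}D_{p,q}^{-N/2}$, $(f^*,g^*)$ achieves $L_{p,q}$, and by Lemma \ref{lemma:same} $(u,v)$ is a least-energy solution, with $c_{p,q}=L_{p,q}$ already established.

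The only real subtlety I foresee is the identification of the additive constants that distinguish $K$ from $K_p,K_q$: one must verify that the zero-mean condition for $\phi_p(\cdot)$ in \eqref{eq:def_kappap} is indeed compatible with the Lagrange multiplier equation, which ultimately rests on $f\in X^\alpha$. The rest is a bookkeeping computation of exponents that collapses neatly thanks to \eqref{eq:exponent_of_scaling}.
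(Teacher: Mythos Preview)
Your proposal is correct and follows essentially the same approach as the paper: a Lagrange-multiplier computation yielding $K_p g=D_{p,q}|f|^{1/p-1}f$, $K_q f=D_{p,q}|g|^{1/q-1}g$, followed by the same rescaling (your $(f^*,g^*)$ coincides with the paper's $(\tilde f,\tilde g)=(D_{p,q}^s f,D_{p,q}^t g)$) and the same upper/lower bound comparison for $L_{p,q}$. The only cosmetic difference is order: you first build $(u,v)$ and then pass to the critical point of $\Phi$, while the paper first builds the critical point and then invokes Lemma~\ref{lemma:same}.
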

\begin{proof}

Assume that $D_{p,q}$ is attained. Then there exist functions $(f,g) \in X$ such that $\gamma_1 \norm{f}_\alpha^\alpha + \gamma_2 \norm{g}_\beta^\beta=1$ and $\int_\Omega f Kg =D_{p, q}$. 
We now use Lagrange multipliers and the definition of $\gamma$ to conclude that there exists $\lambda \in \R$ such that 
\begin{equation}\label{lagrange} 
\int_\Omega \varphi K g= \lambda \gamma \int_\Omega |f|^{\frac1p- 1} f \varphi,\qquad
\int_\Omega \psi K f= \lambda \gamma \int_\Omega |g|^{\frac1q -1} g \psi
 \end{equation}
for any $(\varphi, \psi) \in X$. Now let $\tilde \varphi \in L^\alpha(\Omega)$, $\tilde \psi \in L^\beta(\Omega)$. Define
\[ \varphi= \tilde \varphi - \frac1{|\Omega|} \int_\Omega \tilde \varphi, \quad  \psi= \tilde \psi - \frac1{|\Omega|} \int_\Omega \tilde \psi. \]
Notice that $( \varphi,  \psi) \in X$. Then
\[ \int_\Omega \tilde \varphi Kg- \frac1{|\Omega|} \int_\Omega Kg  \int_\Omega  \tilde \varphi=\lambda \gamma \int_\Omega |f|^{\frac1p- 1} f \tilde \varphi  - \lambda \gamma \frac1{|\Omega|} \int_\Omega |f|^{\frac1p- 1} f\int_\Omega \tilde  \varphi. \] 
This implies, recalling that $\int_{\Omega} K g=0$ and $\tilde \varphi$ is arbitrary,  
\[ Kg + \kappa=\lambda \gamma |f|^{\frac1p- 1} f , \quad \text{ with } \quad
 \kappa=\lambda \gamma \frac1{|\Omega|}\int_\Omega |f|^{\frac1p- 1} f. \]
Since 
\[
\int_\Omega |Kg + \kappa|^{p-1}(Kg + \kappa)=(\lambda \gamma)^p\int_\Omega f=0,
\]
then $\kappa=\kappa_p(Kg)$ (recall \eqref{eq:def_kappap}), and \[K_p g=Kg+\kappa=\lambda \gamma |f|^{\frac1p- 1} f. \]
Similarly,
\[K_q f=\lambda \gamma |g|^{\frac1q- 1} g. \]
Also, by \eqref{lagrange},  
\[ \begin{cases}
\int_\Omega f Kg= \lambda \gamma \norm{f}_\alpha^\alpha \\
\int_\Omega gKf=\lambda \gamma \norm{g}_\beta^\beta.
\end{cases},\quad \text{ so that } \quad D_{p,q}=\int_\Omega f Kg =\int_\Omega gKf=\lambda \gamma \norm{f}_\alpha^\alpha=\lambda \gamma \norm{g}_\beta^\beta.
 \]
Hence, since $\gamma_1+\gamma_2=1$,
\[ D_{p, q}= (\gamma_1+\gamma_2)D_{p, q} =\lambda \gamma (\gamma_1 \norm{f}_\alpha^\alpha + \gamma_2 \norm{g}_\beta^\beta)^{\frac{\gamma-1}{\gamma}}=\lambda \gamma, \]
from which we conclude
\[
K_p g= D_{p, q} |f|^{\frac1p- 1} f \qquad
K_q f= D_{p, q} |g|^{\frac1q- 1} g. 
 \]

Now consider the functions $\tilde f=(D_{p,q})^s f$, and $\tilde g=(D_{p,q})^t g$, with $s=-p\frac{q+1}{pq-1}$ and $t=-q\frac{p+1}{pq-1}$. Then, since $K_p,K_q$ are homogeneous of degree 1,
\[
K_p \tilde g=  |\tilde f|^{\frac1p- 1} \tilde f \qquad
K_q \tilde f=  |\tilde g|^{\frac1q- 1}\tilde g. 
\]
In particular, $\Phi'(\tilde f, \tilde g)=0$. 

One has
\begin{align}\label{inequality 1 L}
L_{p,q} \le \Phi(\tilde f, \tilde g) &= \frac{p}{p+1} \|\tilde f\|_\alpha^\alpha +\frac{q}{q+1} \norm{\tilde g}_\beta^\beta - \int_\Omega \tilde f K \tilde g \\
\nonumber&=\frac{pq-1}{(p+1)(q+1)} \int_\Omega \tilde fK \tilde g=\frac{pq-1}{(p+1)(q+1)}  D_{p,q}^{s+t+1} =\frac{pq-1}{(p+1)(q+1)} D_{p,q}^{-\frac{(p+1)(q+1)}{pq-1}}. 
\end{align}

On the other hand, taking $(f,g) \in X\setminus \{(0,0)\}$ any critical point of $\Phi$, then necessarily $f,g\not\equiv 0$ and  $(\frac{f}{\|f\|_\alpha},\frac{g}{\|g\|_\beta})$ is a test function for $D_{p,q}$, and
\begin{align*}
D_{p,q } &\ge \int_\Omega \frac{ f}{\| f\|_\alpha} K \frac{g}{\norm{ g}_\beta} = \frac{\int_\Omega fK  g}{(\int_\Omega  f K  g)^{\frac1\alpha} (\int_\Omega  f K  g)^{\frac1\beta} } \\
&= \left(\int_\Omega  f K  g\right)^{-\frac{pq-1}{(p+1)(q+1)}} = \left( \frac{(p+1)(q+1)}{pq-1} \Phi( f,  g) \right)^{- \frac{pq-1}{(p+1)(q+1)}}.
\end{align*} Taking the infimum in $(f,g)$ and comparing it with the opposite inequality, we obtain the identity:
from which we conclude \eqref{eq:relation_energylevels}. Also, by \eqref{inequality 1 L} we have that $L_{p, q}$ is attained at $(\tilde f, \tilde g)$.  We can now conclude from Lemma \ref{lemma:same} and using the identity \eqref{eq:exponent_of_scaling}.
\end{proof}

\subsection{Regularity of solutions}
In order to prove Proposition \ref{reg2}, we recall the following result
\begin{proposition}[Proposition 9 in \cite{DRW}, Lemma 3.1 in \cite{reywei}]\label{green}
Let $f $ be such that $\int_\Omega f=0$. Let $G$ be the Green function for
\begin{equation}\label{eq:green} -\Delta u = f \quad \text{ in } \Omega, \qquad  \partial_\nu u=0 \quad \text{ in } \partial \Omega, \end{equation}
namely $G : \Omega \times \Omega \setminus \{ (x, x): x \in \Omega \} \to \R$ is such that 
\begin{itemize}
\item[(i)] $G(x, \cdot) \in L^1(\Omega)$, 
\item[(ii)] $\int_\Omega G(x, y) \, dy=0$, 
\item[(iii)] for any $u $ solution to \eqref{eq:green} one has 
\[ u(x) - \frac{1}{|\Omega|} 	\int_\Omega u = \int_\Omega G(x, y) f(y) \, dy. \]
\end{itemize}
Then
\[ \abs{ G(x, y)} \le C(\Omega) \abs{x-y}^{2-N}, \]
 for all $x, y \in \Omega$, $x \ne y$. 
\end{proposition}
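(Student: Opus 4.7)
My plan is to construct the Green function $G$ by separating the interior singularity, which will be given by the standard fundamental solution of $-\Delta$ on $\R^N$, from a regular correction enforcing the Neumann boundary condition and the mean-zero normalization, and then bound each piece.

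I would first set up the distributional equation that $G(x,\cdot)$ must satisfy: fixing $x\in \Omega$,
\[
-\Delta_y G(x,y) = \delta_x - \frac{1}{\abs{\Omega}}\text{ in }\Omega, \qquad \partial_{\nu_y} G(x,y)=0 \text{ on }\partial\Omega, \qquad \int_\Omega G(x,y)\,dy = 0.
\]
Property \textit{(iii)} would then follow by testing this equation against $u$, using that $\partial_\nu u=0$ so the boundary terms in the integration by parts vanish. To produce $G$ concretely, I use the ansatz $G(x,y)=\Gamma(x-y)+H(x,y)$, with $\Gamma(z)=c_N\abs{z}^{2-N}$ the fundamental solution of $-\Delta$ on $\R^N$. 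Since $-\Delta_y\Gamma(x-\cdot)=\delta_x$ distributionally, $H(x,\cdot)$ must classically solve
\[
-\Delta_y H(x,\cdot) = -\frac{1}{\abs{\Omega}} \text{ in }\Omega, \qquad \partial_{\nu_y}H(x,\cdot) = -\partial_{\nu_y}\Gamma(x-\cdot) \text{ on }\partial\Omega,
\]
which is compatible (from the identity $\int_{\partial\Omega}\partial_{\nu_y}\Gamma(x-y)\,dS_y=-1$, itself an application of the divergence theorem to $\nabla_y\Gamma(x-\cdot)$); a constant is then added to $H$ to enforce $\int_\Omega G(x,\cdot)\,dy=0$, which takes care of \textit{(ii)}, while \textit{(i)} is immediate from $\Gamma(x-\cdot)\in L^1(\Omega)$ and boundedness of $H$.

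The main obstacle is to bound $H$ uniformly, i.e.\ to prove $\sup_{x\in \Omega}\norm{H(x,\cdot)}_{L^\infty(\Omega)}\leq C(\Omega)$. Although the right-hand side of the PDE for $H$ is constant, the Neumann data $\partial_{\nu_y}\Gamma(x-y)\sim \abs{x-y}^{1-N}$ becomes singular as $x$ approaches $\partial\Omega$, so naive $L^p$ and trace estimates for the Neumann problem do not deliver bounds uniform in $x$. I would handle this by a parametrix-of-reflections argument: cover a tubular neighborhood of $\partial\Omega$ by finitely many $C^2$ charts flattening the boundary and, in each chart, add to $\Gamma(x-\cdot)$ its reflected copy $\Gamma(x-\tilde y)$ across the flat boundary. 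On a half-space the combination $\Gamma(x-y)+\Gamma(x-\tilde y)$ exactly satisfies the homogeneous Neumann condition, so gluing the local images via a partition of unity yields a global parametrix $P(x,\cdot)$ whose Neumann mismatch on $\partial\Omega$ is $C^{0,\alpha}$ in $y$ with norms bounded uniformly in $x\in \Omega$ (the curvature of $\partial\Omega$ and the cut-offs contribute only smooth error terms). Applying standard $L^p$--Schauder estimates to the Neumann problem solved by $G-P$ then gives the desired uniform bound on $H$. An alternative I might take is the heat-kernel representation $G(x,y)=\int_0^\infty(p_t(x,y)-1/\abs{\Omega})\,dt$, where $p_t$ is the Neumann heat kernel; the classical Gaussian bound $p_t(x,y)\leq Ct^{-N/2}\exp(-c\abs{x-y}^2/t)$ for $t\leq 1$, combined with spectral-gap decay $\abs{p_t-1/\abs{\Omega}}\leq Ce^{-\lambda t}$ for $t\geq 1$, produces $\abs{G(x,y)}\leq C\abs{x-y}^{2-N}$ directly after the change of variables $s=\abs{x-y}^2/t$.

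Finally, combining the bounds: near the diagonal $\abs{\Gamma(x-y)}=c_N\abs{x-y}^{2-N}$ dominates the bounded term $\abs{H(x,y)}$; away from the diagonal, $\abs{x-y}$ is bounded above by $\mathrm{diam}(\Omega)$, so $\abs{x-y}^{2-N}$ is bounded below by a positive constant that absorbs all uniformly bounded pieces into $C(\Omega)\abs{x-y}^{2-N}$. This yields $\abs{G(x,y)}\leq C(\Omega)\abs{x-y}^{2-N}$ on all of $\Omega\times\Omega\setminus\{(x,x):x\in\Omega\}$, as desired.
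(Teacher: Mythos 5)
The paper does not supply its own proof of Proposition~\ref{green}: it is stated as a cited result from \cite{DRW} and \cite{reywei}, so there is no argument in the text to compare yours against. Your sketch follows the standard route one finds in those references: split $G(x,\cdot)=\Gamma(x-\cdot)+H(x,\cdot)$, check compatibility via
\[
\int_{\partial\Omega}\partial_{\nu_y}\Gamma(x-y)\,dS_y=-1
\]
(which you correctly obtain from the divergence theorem applied on $\Omega\setminus B_\varepsilon(x)$), reduce to a uniform $L^\infty$-bound on $H$, and then absorb everything into $C(\Omega)\abs{x-y}^{2-N}$ using that $\Omega$ is bounded. The distributional set-up, the verification of (i)--(iii), and the final elementary combination near and away from the diagonal are all correct. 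One small but necessary addition is to note that the normalizing constant added to $H$ is itself bounded uniformly in $x$; this follows since $\sup_{x\in\Omega}\int_\Omega\abs{x-y}^{2-N}\,dy<\infty$.

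The one thin spot is exactly where the real work lies: the claim that the Neumann mismatch of the reflected parametrix is $C^{0,\alpha}$ in $y$ with norms uniform in $x\in\Omega$. As $x\to\partial\Omega$, the individual terms $\partial_{\nu_y}\Gamma(x-y)$ and its reflected copy each blow up like $\abs{x-y}^{1-N}$; the cancellation lowers the singularity by one power, but establishing that the resulting error is uniformly controlled (in a norm strong enough to feed Schauder/$L^p$ theory) requires a careful expansion of the boundary chart and is precisely the technical content of the cited proofs, not something that can be asserted. Your heat-kernel alternative is cleaner in this respect: with the classical inputs you quote (Gaussian upper bound for the Neumann heat kernel at small time, exponential spectral-gap decay at large time), the change of variables $s=\abs{x-y}^2/t$ gives $\int_0^1 p_t(x,y)\,dt\le C\abs{x-y}^{2-N}$ for $N\ge 3$, the $-1/\abs{\Omega}$ term over $t\in(0,1)$ is $O(1)$, and $\int_1^\infty\abs{p_t-1/\abs{\Omega}}\,dt$ is bounded; together these yield the stated bound. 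If you want a self-contained proof, that is the route to flesh out.
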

\begin{proof}[Proof of Proposition \ref{reg2}]
We take inspiration from \cite[Theorem 1.3]{ChenLi}, see also \cite[Lemma B.1]{KP}. 
Let us take $r > \frac{N}{N-2}, \frac{N}{Nq-2q-2}$ fixed but arbitrary. Also, we set $s:=\frac{Nrq}{N+2r}$ and 
\[ t:= \frac{q(p+1)}{pq-1} \frac{N+2s}{Ns} >0, \quad t':=rq \frac{N+2s}{Ns} >0. \]
We preliminary notice that 
\begin{equation}\label{s>}
s > \frac{N}{N-2}
\end{equation}
and
\begin{equation}\label{exp holder}
\frac 1 t + \frac 1 {t'}=1.
\end{equation}
Indeed, \eqref{s>} immediately follows recalling $r>\frac{N}{Nq-2q-2}$, and using $q > \frac{2}{N-2}$. 
As for \eqref{exp holder}, one observes that it is equivalent to prove
\[ \frac{N+2r+2rq}{Nrq}=\frac{N+2s}{Ns}= \frac{pq-1}{q(p+1)} + \frac {1}{rq}, \]
namely
\[ \frac {2(q+1)}{N}=\frac{pq-1}{p+1} \]
which is satisfied if and only if $p, q$ belong to the critical hyperbola \eqref{CH}. 

Now, let us define 
\[ u_L(x):=\begin{cases} u(x) &\text{ if } |u(x)| > L \\
0 &\text{ if } |u(x)| \le L. \end{cases} \]
We take $f \in L^r(\Omega)$, and define the operator $T_L: L^r(\Omega) \to L^r(\Omega)$ such that 
\[ T_L(f)(x):=\int_\Omega G(x, y) \left \{ \abs{\int_\Omega G(y, z) h(z) \, dz}^{q-1} \left( \int_\Omega G(y, z) h(z) \, dz \right) \right \}\, dy \]
where
\[ h(z):= |u_L(z)|^{p-\frac 1 q}  |f(z)|^{ \frac 1 q-1} f(z). \]
By applying Proposition \ref{green} and the Hardy--Littlewood--Sobolev inequality (recall that $r > \frac{N}{N-2}$), one has
\[ \norm{T_L f}_r \le C \norm{ \left( \int_\Omega |G(y,z)| |u_L(z)|^{p-\frac 1 q} |f(z)|^{\frac 1 q} \, dz \right)^q}_{\frac{Nr}{N+2r}} = C \norm{\int_\Omega |G(y,z)| |u_L(z)|^{p-\frac 1 q} |f(z)|^{\frac 1 q} \, dz}_s^{q}, \]
where $C>0$ denotes a positive constant.
We now apply once again Proposition \ref{green} and Hardy--Littlewood--Sobolev inequality, this time using $s > \frac{N}{N-2}$, to get
\[ \norm{\int_\Omega |G(y,z)| |u_L(z)|^{p-\frac 1 q} |f(z)|^{\frac 1 q} \, dz}_s^{q} \le C \norm{ |u_L|^{p-\frac 1 q} |f|^{\frac 1 q}}_{\frac{Ns}{N+2s}}^{q}. \]
The H\"older inequality with exponents $t, t'$ yields
\[ \norm{ |u_L|^{p-\frac 1 q} |f|^{\frac 1 q}}_{\frac{Ns}{N+2s}}^{q} \le \norm{|u_L|^{p-\frac 1q}}_{\frac{Nst}{N+2s}}^{q} \norm{|f|^{\frac 1q}}_{\frac{Nst'}{N+2s}}^{q}= \norm{u_L}_{\frac{pq-1}{q} \frac{Nst}{N+2s}}^{pq-1} \norm{f}_{\frac{t'}{q} \frac{Ns}{N+2s}} =\norm{u_L}_{p+1}^{pq-1} \norm{f}_r,  \]
from which we deduce
\[ \norm{T_L f}_r \le C \norm{u_L}_{p+1}^{pq-1} \norm{f}_r. \]
Since $u \in L^{p+1}(\Omega)$, we can choose $L$ large enough such that $\norm{u_L}_{p+1}$ is small enough. This implies that $T_L(f)$ is a contraction mapping from $L^r(\Omega)$ into itself. Moreover, 
\[ u_L=T_L (u_L) + F, \quad \text{ where } F \text{ is uniformly bounded.} \]
 Thus, by the contraction mapping theorem, see Theorem 1 in \cite{ChenJinLiLim}, one has
 $u_L \in L^r(\Omega)$. Since $r$ is arbitrary, we conclude that $u \in L^\infty(\Omega)$. 
 
 Notice that the same arguments give $v \in L^\infty(\Omega)$. Thus, $|u|^{p-1} u \in  L^{r/p}(\Omega)$ for any $r >p$, whereas $|v|^{q-1} v \in L^{r/q}(\Omega)$ for any $r >q$. This immediately gives $v \in W^{2, r}(\Omega)$ for any $r >p$, and $u \in W^{2, r}(\Omega)$ for any $r >q$. We now apply Sobolev embeddings to get $u, v \in C^{0, \gamma}(\overline \Omega)$ for any $\gamma \in (0, 1)$. Schauder regularity theory \cite[Theorem 6.31]{GT} yields $(u, v) \in C^{2, \zeta}(\overline \Omega) \times C^{2, \eta}(\overline \Omega)$, where $\zeta, \eta$ are as in the statement. 
  \end{proof}
\begin{remark}
Actually if $p, q \ge 1$ one can conclude $u, v \in C^\infty(\overline \Omega)$ by a bootstrap argument. 
Notice that the proof above also shows regularity for the Dirichlet system 
\[ \begin{cases}
-\Delta u= \abs{v}^{q-1}v & \text{ in } \Omega\\
-\Delta v= \abs{u}^{p-1}u & \text{ in } \Omega\\
u=v=0 & \text{ on } \partial \Omega,
\end{cases}  \]
with $p, q$ on the critical hyperbola \eqref{CH}. 
\end{remark}

\section{A compactness condition to have $D_{p,q}$ attained}\label{sec:suff}
This Section is devoted to the proof of the following lemma. 
\begin{lemma}[Compactness condition]\label{condition D}
Let $D_{p,q}$ and $S_{p,q}$ be as in \eqref{eq:Dpq} and \eqref{eq:Spq}, respectively. If $p, q$ satisfies \eqref{CH} and  
\[  D_{p,q} > \frac{2^{2/N}}{S_{p,q} }, \]
then $D_{p,q}$ is attained.
\end{lemma}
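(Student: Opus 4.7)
The approach is the standard concentration-compactness scheme adapted to the dual formulation: I would take a maximizing sequence for $D_{p,q}$, pass to weak limits, decompose the bilinear form, and use the Cherrier inequality from Theorem \ref{thm:Cherrier} to bound the part of the sequence that ``escapes'' the weak limit by exactly the threshold $2^{2/N}/S_{p,q}$. To set things up, take $(f_n,g_n)\in X$ with $\gamma_1\|f_n\|_\alpha^\alpha+\gamma_2\|g_n\|_\beta^\beta=1$ and $\int_\Omega f_n K g_n\to D_{p,q}$ (using the characterisation \eqref{eq:Dpq}). After passing to a subsequence, $f_n\rightharpoonup f$ in $L^\alpha$ and $g_n\rightharpoonup g$ in $L^\beta$; testing with the constant $1$ preserves the zero-mean condition, so $(f,g)\in X$. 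Write $\tilde f_n:=f_n-f$, $\tilde g_n:=g_n-g$. Continuity of $K$ from $X^\beta$ to $W^{2,\beta}(\Omega)$ (Lemma \ref{lemma:regularity}) combined with Rellich--Kondrachov gives $K\tilde g_n\to 0$ strongly in $W^{1,\beta}(\Omega)$, and analogously $K\tilde f_n\to 0$ in $W^{1,\alpha}(\Omega)$.

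Using self-adjointness of $K$ and the fact that $Kf\in L^{\beta'}(\Omega)$ is a fixed test function against which $\tilde g_n\rightharpoonup 0$ pairs to zero (and symmetrically for the other cross term), I obtain
\[
D_{p,q}=\int_\Omega f K g+\lim_{n\to\infty}\int_\Omega \tilde f_n K \tilde g_n .
\]
For the first term, the definition of $D_{p,q}$ yields $\int_\Omega fKg\le D_{p,q}\,\mu^{1/\gamma}$ where $\mu:=\gamma_1\|f\|_\alpha^\alpha+\gamma_2\|g\|_\beta^\beta\in[0,1]$ (by weak lower semicontinuity). For the remainder, H\"older followed by the Cherrier inequality in Theorem \ref{thm:Cherrier} applied to $K\tilde g_n\in W^{2,\beta}_\nu(\Omega)$ with exponent $\eta=\beta$ (noting $N\beta/(N-2\beta)=p+1$ by \eqref{CH}, and $\|\Delta K\tilde g_n\|_\beta=\|\tilde g_n\|_\beta$) gives
\[
\Big|\int_\Omega \tilde f_n K\tilde g_n\Big|\le \Big(\tfrac{2^{2/N}}{S_{p,q}}+\varepsilon\Big)\|\tilde f_n\|_\alpha\|\tilde g_n\|_\beta+C(\varepsilon)\|\tilde f_n\|_\alpha\|K\tilde g_n\|_{W^{1,\beta}}.
\]
The last term is $o(1)$ by Step 1, while Young's inequality \eqref{Young} controls $\|\tilde f_n\|_\alpha\|\tilde g_n\|_\beta$ by $(\gamma_1\|\tilde f_n\|_\alpha^\alpha+\gamma_2\|\tilde g_n\|_\beta^\beta)^{1/\gamma}$. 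After extracting a further subsequence along which $f_n,g_n$ converge a.e., the Brezis--Lieb lemma gives $\gamma_1\|\tilde f_n\|_\alpha^\alpha+\gamma_2\|\tilde g_n\|_\beta^\beta\to 1-\mu$, and letting $\varepsilon\to 0^+$,
\[
D_{p,q}\le D_{p,q}\,\mu^{1/\gamma}+\tfrac{2^{2/N}}{S_{p,q}}(1-\mu)^{1/\gamma}.
\]

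To conclude, criticality \eqref{CH} implies $1/\gamma=1/\alpha+1/\beta=(N+2)/N>1$, so $t^{1/\gamma}+(1-t)^{1/\gamma}<1$ strictly on $(0,1)$. Combined with the strict gap $D_{p,q}>2^{2/N}/S_{p,q}$, the displayed inequality is satisfiable only at $\mu=1$ (the case $\mu=0$ being ruled out by the gap hypothesis). Thus the constraint is saturated by $(f,g)$, weak convergence plus convergence of norms in the uniformly convex spaces $L^\alpha,L^\beta$ upgrades to strong convergence, and $\int_\Omega fKg=D_{p,q}$: the supremum is attained.

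\textbf{Main obstacle.} The subtle point is justifying the Brezis--Lieb splitting for the dual variables $(f_n,g_n)$, since a bounded sequence in $L^\alpha$ need not admit an a.e.\ convergent subsequence. The natural fix is to replace the plain maximising sequence by a Palais--Smale sequence produced by Ekeland's variational principle: along such a sequence the approximate Euler--Lagrange relation expresses $f_n,g_n$ in terms of $Kg_n,Kf_n$ (which do converge a.e.\ along a subsequence by the compact embedding $W^{2,\beta}\hookrightarrow L^r$ for $r<p+1$), so a.e.\ convergence transfers to $f_n,g_n$ and unlocks Brezis--Lieb. Designing the Cherrier inequality with the sharp constant $2^{2/N}/S_{p,q}$ is what makes the gap hypothesis exactly the right threshold.
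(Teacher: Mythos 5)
Your proposal follows essentially the same route as the paper's proof: Ekeland's variational principle (which you correctly flag as the mechanism that secures the a.e.\ convergence needed for Brezis--Lieb, via the approximate Euler--Lagrange relation), the Brezis--Lieb splitting, and the Cherrier-type inequality from Theorem~\ref{thm:Cherrier} with $\eta=\beta$, after which the strict gap $D_{p,q}>2^{2/N}/S_{p,q}$ forces the escaping mass to vanish. The only difference is in the final algebra: the paper cancels the weak-limit contribution from both sides of \eqref{eq:ineq_aux2} to reach \eqref{1est} directly, whereas you keep the mass parameter $\mu$ and use the strict sublinearity $\mu^{1/\gamma}+(1-\mu)^{1/\gamma}<1$ on $(0,1)$ (valid since $1/\gamma=(N+2)/N>1$) to exclude the dichotomy; both conclude equally well.
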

\begin{proof}
Let us define, for any $(f, g) \in X$,
\[ F(f, g)=\int_\Omega f Kg=\int_\Omega g Kf \quad \text{ and } \quad H(f, g)=\gamma_1 \norm{f}_\alpha^\alpha + \gamma_2 \norm{g}_\beta^\beta.\]
Take a maximizing sequence $(f_k, g_k)$ for $D_{p,q}$.
By Ekeland's variational principle, we find $(\tilde f_k, \tilde g_k)\in X$ and $\lambda_k \in \R$ such that
\begin{equation}\label{cond} \gamma_1 \| \tilde f_k \|_\alpha^\alpha + \gamma_2 \norm{\tilde g_k}_\beta^\beta=1, \end{equation}
\[ 
\norm{f_k-\tilde f_k}_\alpha \to 0, \qquad \norm{g_k-\tilde g_k}_\beta \to 0,\qquad F(\tilde f_k, \tilde g_k) \to D_{p, q}, \]
and
\[ F'(\tilde f_k, \tilde g_k)-\lambda_k H'(\tilde f_k, \tilde g_k) \to 0 \text{ in } X^*. \]
Up to a subsequence, there exist $(f, g) \in X$ such that
\begin{equation}\label{eq:weakconv}
 \tilde f_k \rightharpoonup f \text{ weakly in } L^\alpha(\Omega), \quad \tilde g_k \rightharpoonup g \text{ weakly in } L^\beta(\Omega). 
 \end{equation}
Our main goal is to prove that this convergence is strong, after which it is straightforward to see that $(f,g)$ achieves $D_{p,q}$. Since the proof is long, we split it in several steps. 
\smallbreak

\noindent \textit{First step.} We show that $\tilde f_k \to f$ and $\tilde g_k \to g$ almost everywhere. 
For any $\varphi \in L^\alpha(\Omega)$, define 
\[ \tilde \varphi= \varphi - \frac1{\abs{\Omega}} \int_\Omega \varphi \in X^\alpha\]
and notice that $ \| \tilde \varphi \|_\alpha \le 2 \| \varphi \|_\alpha$.
Let $c_k:=\frac{\gamma \lambda_k}{|\Omega|}  \int_\Omega |\tilde f_k |^{\frac1p-1} \tilde f_k$,
so that 
\begin{align*} \int_\Omega \varphi K \tilde g_k +&\int_\Omega c_k \varphi - \gamma \, \lambda_k \int_\Omega |\tilde f_k |^{\frac1p-1} \tilde f_k \varphi=
\int_\Omega \tilde \varphi K \tilde g_k -\gamma\, \lambda_k \int_\Omega |\tilde f_k |^{\frac1p-1} \tilde f_k \tilde \varphi \\
&\le \norm{\partial_1 F(\tilde f_k, \tilde g_k)-\lambda_k \partial_1 H(\tilde f_k, \tilde g_k)}_{(X^{\alpha})^*} \norm{ \tilde \varphi}_\alpha  \le2 \norm{\partial_1 F(\tilde f_k, \tilde g_k)-\lambda_k \partial_1 H(\tilde f_k, \tilde g_k)}_{(X^{\alpha})^*} \norm{ \varphi}_\alpha. \end{align*}
Hence
\begin{align*}
\norm{ K \tilde g_k + c_k - \lambda_k  \gamma |\tilde f_k |^{\frac1p-1} \tilde f_k }_{p+1} &= \norm{ K \tilde g_k + c_k - \lambda_k  \gamma |\tilde f_k |^{\frac1p-1} \tilde f_k }_{(L^{\alpha})^*} \\
&= \sup_{\varphi\in L^\alpha (\Omega)\setminus \{0\}} \frac{\abs{ \int_\Omega \varphi K \tilde g_k +\int_\Omega c_k \varphi - \gamma \, \lambda_k \int_\Omega |\tilde f_k |^{\frac1p-1} \tilde f_k \varphi }}{\norm{\varphi}_\alpha} \\
&\le 2 \norm{\partial_1 F(\tilde f_k, \tilde g_k)-\lambda_k \partial_1 H(\tilde f_k, \tilde g_k)}_{(X^{\alpha})^*} \to 0.
\end{align*}
In particular, up to a subsequence,
\[
K \tilde g_k + c_k - \lambda_k  \gamma |\tilde f_k |^{\frac1p-1} \tilde f_k  \quad \text{ converges a.e. in } \Omega.
\]
All terms in this expression converge a.e. Indeed, up to subsequences:
\begin{itemize}
\item $K \tilde g_k$ converges a.e. This is because, by Lemma \ref{lemma:regularity} and \eqref{cond}, we have that $\norm{K\tilde g_k}_{W^{2,\beta}}\leq C\norm{\tilde g_k}_\beta\leq \tilde C$  for every $k$. Therefore, by the compact embeddings $W^{2,\beta}(\Omega)\hookrightarrow L^t(\Omega)$ for every $t\in [1,p+1)$, we have the existence of $w\in L^{p+1}(\Omega)$ such that
\[
K\tilde g_k \to w \quad \text{ strongly in } L^t(\Omega) \text{ for any $1\leq t<p+1$, pointwisely a.e.}.
\]
\item $\lambda_k \to  \frac1\gamma D_{p, q} \ne 0$, since we have \begin{align*} 
o(1)&=F'(\tilde f_k, \tilde g_k)\left(\gamma_1 \tilde f_k, \gamma_2 \tilde g_k\right)- \lambda_k H'(\tilde f_k, \tilde g_k)\left(\gamma_1 \tilde f_k, \gamma_2 \tilde g_k\right) \\
&=(\gamma_1+\gamma_2) D_{p, q} - \lambda_k \gamma (\gamma_1 \| \tilde f_k\|_\alpha^\alpha + \gamma_2 \norm{\tilde g_k}_\beta^\beta) + o(1)\\
&=D_{p, q} - \lambda_k \gamma + o(1), \end{align*}
(where we recall  that $\gamma=\gamma_1 \alpha=\gamma_2 \beta$ and $\gamma_1+\gamma_2=1$).
\item there exists a constant $c_0 \in \mathbb{R}$ such that, up to a subsequence   $c_k \to c_0$ since, by \eqref{cond} and the previous paragraph, we have that  $c_k$  is bounded. 
\end{itemize}
Combining all of this we deduce that $|\tilde f_k|^{\frac{1}{p}-1}\tilde f_k$ converges a.e, and so also $\tilde f_k$. Together with \eqref{eq:weakconv}, this yields that $\tilde f_k\to f$ a.e. in $\Omega$. Similarly one proves that also $\tilde g_k \to g$ a.e. 
\smallbreak

\noindent \textit{Second step.} $\tilde f_k \to f$ strongly in $L^\alpha(\Omega)$, $\tilde g_k \to g$ strongly in $L^\beta(\Omega)$.
We write 
\[ \tilde f_k = f + w_k \qquad \text{ and } \qquad \tilde g_k =g+z_k, \]
where $w_k \rightharpoonup 0$ in $L^\alpha(\Omega)$, and $z_k \rightharpoonup 0$ in $L^\beta(\Omega)$. By Br\'ezis-Lieb's Lemma, we have
\begin{align*} 
\gamma_1 \norm{f}_\alpha^\alpha + \gamma_1\norm{w_k}_\alpha^\alpha + \gamma_2 \norm{g}_\beta^\beta + \gamma_2 \norm{z_k}_\beta^\beta 					&=\gamma_1 \|\tilde f_k\|_\alpha^\alpha+\gamma_2\|\tilde g_k\|_\beta^\beta+o(1)=1+o(1).
 \end{align*}
In particular, since $\gamma<1$ (which is equivalent to $pq>1$), then 
\begin{equation}\label{eq:BL_2}
\left(\gamma_1 \norm{f}_\alpha^\alpha + \gamma_2 \norm{g}_\beta^\beta\right)^{1/\gamma} + \left( \gamma_1\norm{w_k}_\alpha^\alpha+ \gamma_2 \norm{z_k}_\beta^\beta\right)^{1/\gamma} \leq 1+o(1).
\end{equation}
Now,
\begin{align}
D_{p, q} + o(1) = \int_\Omega \tilde g_k K \tilde f_k  &= \int_\Omega g K \tilde f_k + \int_\Omega z_k K \tilde f_k = \int_\Omega g K f + \int_\Omega g K w_k + \int_\Omega z_k K f + \int_\Omega z_k K w_k \notag\\
&=\int_\Omega g K f +\int_\Omega z_k K w_k + o(1) \label{eq:ineq_aux2}.
\end{align}
By definition of $D_{p, q}$, we can estimate the right-hand side of \eqref{eq:ineq_aux2} as follows
\begin{align*} \int_\Omega g K f +\int_\Omega z_k K w_k + o(1) &\le D_{p, q} (\gamma_1 \norm{f}_\alpha^\alpha + \gamma_2 \norm{g}_\beta^\beta)^{1/\gamma} + \int_\Omega z_k K w_k+ o(1).
\end{align*}
The left-hand side of \eqref{eq:ineq_aux2} can be estimated using \eqref{eq:BL_2} as 
\begin{align*} D_{p, q}+o(1) \ge D_{p, q}(\gamma_1\norm{w_k}_\alpha^\alpha + \gamma_2 \norm{z_k}_\beta^\beta)^{1/\gamma} +D_{p, q}(\gamma_1 \norm{f}_\alpha^\alpha + \gamma_2 \norm{g}_\beta^\beta)^{1/\gamma} +o(1).
\end{align*}
From this we deduce that
\begin{equation}\label{eq:aux_compact1}
D_{p, q}  (\gamma_1\norm{w_k}_\alpha^\alpha + \gamma_2 \norm{z_k}_\beta^\beta)^{1/\gamma}  \le \int_\Omega z_k Kw_k+ o(1). 
\end{equation}
Notice that $\norm{K w_k}_{W^{1,\alpha}} \to 0$. Indeed, by continuity of $K$ (Lemma \ref{lemma:regularity}) one has $K w_k \rightharpoonup 0$ weakly in $W^{2, \alpha}(\Omega)$ and the conclusion follows by compactness of the embedding $W^{2, \alpha}(\Omega) \hookrightarrow W^{1, \alpha}(\Omega)$. Also, 
\[ K w_k \in W^{2, \alpha}_\nu(\Omega)=\{u \in W^{2, \alpha}(\Omega): u_\nu=0 \text{ on } \partial \Omega \}, \]
 by definition of $K$. Assume without loss of generality that 
 \[
 q \ge \frac{N+2}{N-2}
 \]
(the other case follows identically since $S_{p,q}=S_{q,p}$). Since $N > 2 \frac{q+1}{q}$, we can apply the Cherrier's type inequality \eqref{cherrier} with $u= K z_k$  and $\eta=\frac{q+1}{q}$ to obtain
\[  \norm{Kz_k}_{p+1}  \le \left( \frac{2^{2/N}}{S_{p, q}} + \varepsilon \right) \norm{z_k}_{\frac{q+1}{q}} + C(\varepsilon) \norm{Kz_k}_{W^{1,\frac{q+1}{q}}}, \]
where $S_{p, q}$ is the best Sobolev constant for the embedding $\mathcal{D}^{2, {\frac{q+1}{q}}}(\R^N) \hookrightarrow L^{p+1}(\R^N)$, recall \eqref{eq:Spq}. 

We obtain
\begin{align}
\int_\Omega z_k Kw_k =\int_\Omega w_k Kz_k &\le \norm{w_k}_\alpha \norm{K z_k}_{p+1}  \le \norm{w_k}_\alpha \left( \frac{2^{2/N}}{S_{p, q}} + \varepsilon \right) \norm{ z_k}_{\frac{q+1}{q}}+o(1) \notag\\
								&=\left( \frac{2^{2/N}}{S_{p, q}} + \varepsilon \right) \norm{w_k}_\alpha \norm{z_k}_\beta + o(1) \notag\\
								&\le   \left( \frac{2^{2/N}}{S_{p, q}} + \varepsilon \right) (\gamma_1\norm{w_k}_\alpha^\alpha + \gamma_2 \norm{z_k}_\beta^\beta)^{1/\gamma }
+o(1), \label{eq:aux_compact2}
\end{align}
where in the last inequality we used \eqref{Young} (a consequence of Young's inequality).
Combining  \eqref{eq:aux_compact1} with \eqref{eq:aux_compact2}  yields
\begin{equation}\label{1est}  (\gamma_1\norm{w_k}_\alpha^\alpha + \gamma_2 \norm{z_k}_\beta^\beta)^{1/\gamma} \left(D_{p, q} -\frac{2^{2/N}}{S_{p,q}} - \varepsilon \right)  \le  
o(1). \end{equation}
Hence,  if  
\[  1 > \frac{2^{2/N}}{S_{p,q} D_{p,q}} \]
holds true, then we have, by taking $\varepsilon$ small enough, that 
\[  \norm{w_k}_\alpha\to 0 \quad \text{ and } \quad  \norm{z_k}_\beta \to 0, \] whence  $\tilde f_k \to f$ strongly in $L^\alpha(\Omega)$ and $\tilde g_k \to g$ strongly in $L^\beta(\Omega)$. 

\smallbreak

\noindent \textit{Third step.} $D_{p, q}$ is attained in $(f, g)$. Indeed, by Lemma \ref{lemma:regularity}, we have that also $K \tilde g_k \to K g$ strongly in $L^{p+1}$, thus
\[ \int_\Omega \tilde f_k K \tilde g_k \to \int_\Omega f K g =D_{p, q}. \]
Notice that $(f, g) \in X$, and, by passing to the limit in \eqref{cond}, we also have
\[ \gamma_1 \| f \|_\alpha^\alpha + \gamma_2 \norm{g}_\beta^\beta=1, \]
which concludes the proof. 
\end{proof}

\section{Proof of Theorem \ref{main thm}} 
\label{sec:proof}
Throughout this Section we assume that $q\leq p$  satisfy the assumptions of Theorem \ref{main thm} which, in this case, are equivalent to
\begin{equation}\label{eq:assumptions_pq}
\text{$p,q$ satisfy \eqref{CH}},\qquad  N\geq 6 \qquad \text{ and } \qquad \frac{N+2}{2(N-2)}< q\leq \frac{N+2}{N-2}
\end{equation}
or 
\begin{equation}\label{eq:assumptions_pq2}
\text{$p,q$ satisfy \eqref{CH}},\qquad  N= 5 \qquad \text{ and } \qquad \frac{17}{13}< q\leq \frac{7}{3}
\end{equation}
or
\begin{equation}\label{eq:assumptions_pq3}
\text{$p,q$ satisfy \eqref{CH}},\qquad  N= 4 \qquad \text{ and } \qquad \frac{7}{3}< q\leq 3.
\end{equation}

Recall from Subsection \ref{sec:bubbles} that we denote by $(U,V)\in W^{2,\beta}(\R^N)\times W^{2,\alpha}(\R^N)$ the radially decreasing solution of \eqref{eq:limitingsystem}, which satisfies the identity \eqref{eq:Spq_achieved} and the decay estimates \eqref{eq:decayestimate}. 

From now on, we choose $x_0$ to be a point on the boundary $\partial \Omega$ such that the curvature is positive. Without loss of generality, up to a rotation and a translation, we assume that
\[
x_0=0\in \partial \Omega,
\]
and that for a small $\eta>0$ we have
\begin{equation}\label{eq:boundary close to 0}
\Omega \cap B_\eta(0)=\{(x',x_N):\ x_N>\rho(x')\},\qquad \rho(x')=\sum_{j=1}^{N-1} \rho_j x_j^2 + O(|x'|^3).
\end{equation}
Therefore, for $x\in B_\eta(0)\cap \partial \Omega$,
\begin{equation}\label{boundary}
x_N=\sum_{j=1}^{N-1} \rho_j x_j^2 + O(|x'|^3),\quad \text{ and the mean curvature at $0$ is } \quad H(0)=\frac{2\sum_{j=1}^{N-1} \rho_j}{N-1}>0.
\end{equation} 
Here and henceforth we denote $x'=(x_1, \dots, x_{N-1})$. 
An outward normal close to $0$ has the expression
\[
(\nabla \rho(x'),-1)=(2\rho_1 x_1 + O(\abs{x'}^2), \dots, 2\rho_{N-1} x_{N-1} + O(\abs{x'}^2), -1),
\]
Hence the unitary exterior normal has the form:
\begin{equation}\label{eq:unitarynormal}
\nu(x)=\frac{(2\rho_1 x_1 + O(\abs{x'}^2), \dots, 2\rho_{N-1} x_{N-1} + O(\abs{x'}^2), -1)}{\sqrt{1+O(|x'|^2)}} \qquad \text{ as } x'\to 0.
\end{equation}

Recalling \eqref{eq:familybubbles},  we take, for $\varepsilon>0$, the scalings of $(U,V)$ centered at $x_0=0$:
\[U_\varepsilon(x):= U_{\varepsilon, 0} (x)=
\varepsilon^{-\frac N{p+1}} U \left(\frac{x}{\varepsilon} \right), \qquad V_\varepsilon(x):=V_{\varepsilon, 0} (x)=\varepsilon^{-\frac N{q+1}} V \left(\frac{x}{\varepsilon} \right), \]
which belong to $W^{2,\beta}(\Omega)\times W^{2,\alpha}(\Omega)$ and solve
\[
-\Delta U_{\varepsilon} = V_{\varepsilon}^q, \qquad -\Delta V_{\varepsilon} = U_{\varepsilon}^p\qquad  \text{ in  } \R^N.
\]

Let us define
\[ \tildeue=\ue^p +\we, \qquad \tildeve=\ve^q+\ze, \]
where 
\[ \we:=-\frac{1}{|\Omega|} \int_\Omega \ue^p, \qquad \ze:=-\frac{1}{|\Omega|} \int_\Omega \ve^q. \]
This way, $(\tildeue, \tildeve) \in X$ and we can, in particular, apply the operator $K$ to each component.

We prove the following result below.
\begin{proposition}\label{claim}
Assume $p,q$ satisfy \eqref{eq:assumptions_pq}, \eqref{eq:assumptions_pq2} or \eqref{eq:assumptions_pq3}.
Then, there exists a positive constant $c$ such that one has 
\[ \frac{\displaystyle \int_\Omega \tildeue K \tildeve}{\| \tildeue \|_\alpha \| \tildeve \|_\beta}  \ge \frac{2^{\frac2N}}{S_{p,q}} + c \eps + o(\eps)\qquad \text{ as } \eps\to 0^+. \]
\end{proposition}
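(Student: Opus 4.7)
The plan is to show that
\[ \mathcal R_\eps := \frac{\int_\Omega \tildeue K\tildeve}{\|\tildeue\|_\alpha\,\|\tildeve\|_\beta} = \frac{2^{2/N}}{S_{p,q}} + c\,H(0)\,\eps + o(\eps) \]
with $c>0$, by separately expanding numerator and denominator to order $\eps$. After rescaling $y=x/\eps$, the rescaled domain $\Omega/\eps$ locally converges to the half-space $\{y_N>0\}$ rather than to $\R^N$, which halves the leading integrals and improves the whole-space Sobolev ratio $1/S_{p,q}$ to $2^{2/N}/S_{p,q}$. By \eqref{eq:norm_estimates} one has $|\we|,|\ze|=O(\eps^{N/(p+1)}),O(\eps^{N/(q+1)})$, so the corrections attributable to $\we,\ze$ are of order $O(\eps^{N-2})$ and, since $N\geq 4$, are strictly lower order than the $O(\eps)$ contribution coming from the boundary curvature $H(0)>0$.

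For the denominator, a direct Taylor expansion gives $\|\tildeue\|_\alpha^\alpha = \int_\Omega \ue^{p+1}\,dx + o(\eps)$, and an analogous identity for $\|\tildeve\|_\beta^\beta$. Changing variables $y=x/\eps$ converts $\int_\Omega\ue^{p+1}\,dx$ into $\int_{\Omega/\eps} U^{p+1}(y)\,dy$, and by \eqref{boundary} the defect with respect to the half-space is a thin strip of thickness $\eps\sum_j \rho_j y_j^2+O(\eps^2)$ at the hyperplane $\{y_N=0\}$, whence
\[ \int_\Omega\ue^{p+1}\,dx = \tfrac{1}{2}\,S_{p,q}^{N/2} - \eps\,\tfrac{H(0)}{2}\int_{\R^{N-1}}|y'|^2\,U^{p+1}(y',0)\,dy' + o(\eps), \]
with an analogous formula for $\int_\Omega \ve^{q+1}\,dx$. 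Integrability of the moments $|y'|^2 U^{p+1}(y',0)$ and $|y'|^2 V^{q+1}(y',0)$ on $\R^{N-1}$, checked via the sharp decay \eqref{eq:decayestimate}, is the point at which the restrictions \eqref{eq:assumptions_pq}--\eqref{eq:assumptions_pq3} enter.

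For the numerator, introduce $R_\eps:=K\tildeve-\ue+\bar U_\eps$ with $\bar U_\eps=|\Omega|^{-1}\int_\Omega \ue$, so that $\int_\Omega R_\eps=0$, $-\Delta R_\eps=\ze$ in $\Omega$ and $\partial_\nu R_\eps=-\partial_\nu\ue$ on $\partial\Omega$. Since $\int_\Omega \tildeue=0$,
\[ \int_\Omega \tildeue K\tildeve = \int_\Omega \ue^{p+1}\,dx + \we\int_\Omega \ue\,dx + \int_\Omega \ue^{p} R_\eps\,dx, \]
and two integrations by parts against $-\Delta\ve=\ue^p$ transform the last term into the key boundary integral $-\int_{\partial\Omega} \ve\,\partial_\nu \ue\,d\sigma$ modulo lower-order pieces. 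Using \eqref{eq:unitarynormal} and the radial identity $\nabla U(y)\cdot\nu(x)=U'(|y|)\,(y\cdot\nu)/|y|$, one computes $y\cdot\nu=\eps\sum_j \rho_j y_j^2+O(\eps^2)$ on $\partial\Omega$; since $U'(r)<0$ this integral is positive, and rescaling produces
\[ -\int_{\partial\Omega}\ve\,\partial_\nu\ue\,d\sigma = \eps\,\tfrac{H(0)}{2}\int_{\R^{N-1}} U(y')\,V(y')\,dy' + o(\eps). \]

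Assembling with $M:=\tfrac{1}{2}\,S_{p,q}^{N/2}$,
\[ \mathcal R_\eps = \frac{2^{2/N}}{S_{p,q}} + \frac{H(0)\,\eps}{M^{(N+2)/N}}\,\Bigl[\,D - \frac{C_U}{p+1} + \frac{q\,C_V}{q+1}\,\Bigr] + o(\eps), \]
where $D:=\tfrac{1}{2}\int_{\R^{N-1}} UV\,dy'$, $C_U:=\tfrac{1}{2}\int_{\R^{N-1}}|y'|^2 U^{p+1}(y',0)\,dy'$ and $C_V:=\tfrac{1}{2}\int_{\R^{N-1}}|y'|^2 V^{q+1}(y',0)\,dy'$ are positive. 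The main obstacle is to prove that the bracket is strictly positive for all $(p,q)$ in the range of Theorem \ref{main thm}: the positive contributions $D$ and $qC_V/(q+1)$ must dominate the negative term $C_U/(p+1)$. Since $U,V$ have no closed form, this relies on the sharp decay \eqref{eq:decayestimate}--\eqref{eq:decayestimates2} combined with Pohozaev-type identities to relate the three moment integrals, and it is precisely in this step that the bounds $p,q>(N+2)/(2(N-2))$, strengthened to $p,q<N$ in low dimensions, are used.
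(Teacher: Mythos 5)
Your approach is genuinely different from the paper's, and it has a real gap at its crux. You try to compute the exact first-order expansion of the ratio $\mathcal{R}_\eps$ in powers of $\eps$ and then must prove that the resulting $O(\eps)$ coefficient, the bracket $D - \tfrac{C_U}{p+1} + \tfrac{qC_V}{q+1}$, is strictly positive. You correctly identify this as the main obstacle, and you propose to resolve it via decay estimates and ``Pohozaev-type identities,'' but this is precisely the step that is not carried out and is far from obvious: $U,V$ have no closed form, the three moment integrals $D,C_U,C_V$ involve different powers, and a positive term must dominate a negative one of a priori comparable size. Nothing in \eqref{eq:decayestimate}--\eqref{eq:decayestimates2} directly yields such a comparison.

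The paper sidesteps this issue entirely. It never expands the boundary integral $\int_{\partial\Omega}\ve\,\partial_\nu\ue$ asymptotically; Lemma \ref{step4} only establishes its \emph{sign} (it is negative, hence drops favorably). The key device is the interpolation in the proof of Proposition \ref{claim}: for $\lambda\in(\tfrac{1}{p+1},\tfrac{q}{q+1})$ one writes $\int_\Omega\nabla\ue\cdot\nabla\ve$ as a convex combination $\lambda\int\ue^{p+1}+(1-\lambda)\int\ve^{q+1}$ plus boundary terms, applies Young's inequality, and divides by $\|\tildeue\|_\alpha\|\tildeve\|_\beta$. The net exponents on $\|\ue\|_{p+1}$ and $\|\ve\|_{q+1}$ are then $1-\lambda(p+1)<0$ and $\lambda(q+1)-q<0$ respectively, so the \emph{negative} $O(\eps)$ corrections to $\int\ue^{p+1}$ and $\int\ve^{q+1}$ coming from Lemma \ref{step6} are raised to negative powers and flip into a strictly \emph{positive} $c\,\eps$ in the ratio. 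This yields the conclusion without ever needing to know the size of the boundary integral, only its sign, and without comparing moments of $U$ and $V$. Your approach, if the bracket could be proved positive, would give a sharper constant $c$; the paper's gives a crude but certain one.

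Two further, smaller issues in your proposal. First, the asymptotic you write for the boundary term is miscomputed: from \eqref{eq:normal_der} and \eqref{eq:unitarynormal}, after rescaling one gets $-\int_{\partial\Omega}\ve\,\partial_\nu\ue = \eps\,\tfrac{H(0)}{2}\int_{\R^{N-1}}\bigl(-U'(|y'|)\bigr)V(|y'|)\,|y'|\,dy' + o(\eps)$, not $\eps\,\tfrac{H(0)}{2}\int_{\R^{N-1}}UV\,dy'$; both are positive but they are different quantities, and correcting this changes your $D$. Second, the statement $\|\tildeue\|_\alpha^\alpha = \int_\Omega\ue^{p+1}\,dx + o(\eps)$ ``by direct Taylor expansion'' is not a Taylor expansion: $\we$ is not pointwise small compared to $\ue^p$ away from the concentration point, and one must use the inequality $\bigl||a+b|^\alpha-|a|^\alpha\bigr|\le C(|a|^{\alpha-1}|b|+|b|^\alpha)$ together with the $L^1$ estimates \eqref{eq:norm_estimates}, as the paper does in Lemma \ref{step5}.
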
 
Notice that, once Proposition \ref{claim} is proved, then we may conclude the following.

\begin{proof}[Proof of Theorem \ref{main thm}]
Under the assumptions of Theorem \ref{main thm}, assume $q\leq p$. Then \eqref{eq:assumptions_pq}, \eqref{eq:assumptions_pq2} or \eqref{eq:assumptions_pq3} holds true, and from \eqref{def D eq 2} and Proposition  \ref{claim}  we have
\[ 
D_{p,q}=\mathop{\sup_{(f,g)\in X}}_{f,g\neq 0}\frac{\int_\Omega f Kg}{\|f\|_\alpha \|g\|_\beta}\geq  \frac{\int_\Omega \tildeue K \tildeve}{\| \tildeue \|_\alpha \| \tildeve \|_\beta} > \frac{2^{\frac2N}}{S_{p,q}} + \frac{c}{2} \eps > \frac{2^{2/N}}{S_{p, q}} \] 
by taking $\eps$ small enough. The same is true in case $p\leq q$, by exchanging the roles of $p$ and $q$, and $U$ and $V$. In conclusion, Theorem \ref{main thm} is proved  by recalling Lemma \ref{condition D}. Regularity follows from Proposition \ref{reg2}. 
\end{proof}

In order to prove Proposition \ref{claim}, we generalize to systems the ideas from \cite{CK} and use estimates in \cite{HMV, HV}. 
We preliminary define $\Phi_\eps$ and $\Psi_\eps$ in the following way:
\[ K \tildeue=\ve + \psie, \qquad K \tildeve=\ue+\phie, \]
so that, in particular,
\[ \int_\Omega(\ue+\phie)=0, \qquad \partial_\nu (\ue+\phie)=0 \text { on } \partial \Omega, \]
and
\[ \int_\Omega(\ve+\psie)=0, \qquad \partial_\nu (\ve+\psie)=0 \text{ on } \partial \Omega.\]

Equivalently,  $\Phi_\eps$ and $\Psi_\eps$ satisfy 
\[ 
\begin{cases}
-\Delta \phie = \ze &\text{ in } \Omega \\
\partial_\nu \phie= - \partial_\nu \ue & \text{ on } \partial \Omega \\
\displaystyle \int_\Omega \phie = - \int_\Omega \ue & 
\end{cases}\qquad \text{ and } \qquad \begin{cases}
-\Delta \psie = \we &\text{ in } \Omega \\
\partial_\nu \psie= - \partial_\nu \ve & \text{ on } \partial \Omega \\
\displaystyle \int_\Omega \psie = - \int_\Omega \ve. & 
\end{cases} \]

The proof of Proposition \ref{claim} is split into several lemmas. 
\begin{lemma}\label{step1}
 One has
\[  \int_\Omega \tildeue K \tildeve =\int_\Omega \ue^{p+1} + \int_\Omega \ve^{q+1} -\int_\Omega \nabla \ue \cdot \nabla \ve + \int_\Omega \nabla \psie \cdot \nabla \phie +  \int_\Omega \ve \ze + \int_\Omega \we \ue.\]
\end{lemma}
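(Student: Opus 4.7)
The plan is to compute $\int_\Omega \tildeue K\tildeve$ by exchanging one factor of $K$ for a Laplacian via integration by parts. The key observation is that, by definition of the operator $K$, one has $-\Delta(\ve+\psie)=\tildeue$ with $\partial_\nu(\ve+\psie)=0$ on $\partial\Omega$, and similarly $K\tildeve=\ue+\phie$ with $\partial_\nu(\ue+\phie)=0$. Hence
\[
\int_\Omega \tildeue K\tildeve
=\int_\Omega\bigl[-\Delta(\ve+\psie)\bigr](\ue+\phie)
=\int_\Omega \nabla(\ve+\psie)\cdot\nabla(\ue+\phie),
\]
with \emph{no} boundary contribution since both Neumann traces vanish. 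Expanding the product yields four pieces, one of which is already the desired $\int_\Omega \nabla\psie\cdot\nabla\phie$.

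Next I would handle the two mixed terms by a second Green identity, this time shifting the Laplacian onto $\phie$ and $\psie$, which satisfy $-\Delta\phie=\ze$, $\partial_\nu\phie=-\partial_\nu\ue$ and $-\Delta\psie=\we$, $\partial_\nu\psie=-\partial_\nu\ve$. This gives
\[
\int_\Omega \nabla \ve\cdot\nabla\phie=\int_\Omega \ve\ze-\int_{\partial\Omega}\ve\,\partial_\nu\ue,\qquad
\int_\Omega \nabla \psie\cdot\nabla\ue=\int_\Omega \we\ue-\int_{\partial\Omega}\ue\,\partial_\nu\ve,
\]
producing exactly the two mean-correction contributions $\int_\Omega\ve\ze$ and $\int_\Omega\we\ue$ of the claimed formula, at the price of two unwanted boundary integrals.

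To dispose of those boundary integrals, together with the remaining interior term $\int_\Omega \nabla\ue\cdot\nabla\ve$, I would use that $\ue,\ve$ still satisfy the whole-space bubble equations $-\Delta\ve=\ue^p$, $-\Delta\ue=\ve^q$ on $\Omega$. Green's formula on $\Omega$ gives
\[
\int_\Omega \ue^{p+1}=\int_\Omega \nabla\ue\cdot\nabla\ve-\int_{\partial\Omega}\ue\,\partial_\nu\ve,\qquad
\int_\Omega \ve^{q+1}=\int_\Omega \nabla\ue\cdot\nabla\ve-\int_{\partial\Omega}\ve\,\partial_\nu\ue,
\]
so that summing the two identities yields
\[
-\int_{\partial\Omega}\ue\,\partial_\nu\ve-\int_{\partial\Omega}\ve\,\partial_\nu\ue=\int_\Omega \ue^{p+1}+\int_\Omega \ve^{q+1}-2\int_\Omega \nabla\ue\cdot\nabla\ve.
\]
Substituting into the expression built in the previous two paragraphs collapses the remaining boundary integrals and produces precisely the claimed identity.

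There is no real conceptual obstacle here: the lemma is essentially a careful bookkeeping of Green identities. The only small point to be careful about is the very first step, namely that the definition $\tildeue=\ue^p+\we$, $\tildeve=\ve^q+\ze$ ensures $(\tildeue,\tildeve)\in X$, which is exactly what allows the identification $\tildeue=-\Delta(\ve+\psie)$ used at the outset; all subsequent integrations by parts are legitimate since $\ue,\ve$ are smooth on $\overline\Omega$ (restrictions of smooth functions on $\R^N$) and $\phie,\psie$ are smooth solutions to Poisson problems with constant right-hand sides and smooth Neumann data.
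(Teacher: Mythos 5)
Your proof is correct; it takes a genuinely different bookkeeping route from the paper. The paper never uses the PDEs satisfied by $\phie,\psie$ (nor those satisfied by $\ue,\ve$): starting from $\int_\Omega \tildeue(\ue+\phie)$, it repeatedly substitutes the decompositions $K\tildeue=\ve+\psie$, $\phie=K\tildeve-\ue$ and the defining properties $-\Delta Kh=h$, $\partial_\nu Kh=0$ of the operator $K$, so all the bubble and correction equations stay implicit. You instead perform one global Green identity to shift the Laplacian onto $K\tildeue=\ve+\psie$ (whose Neumann trace vanishes), expand $\int_\Omega\nabla(\ve+\psie)\cdot\nabla(\ue+\phie)$ into four terms, and then appeal explicitly to the Poisson problems $-\Delta\phie=\ze$, $-\Delta\psie=\we$ (with $\partial_\nu\phie=-\partial_\nu\ue$, $\partial_\nu\psie=-\partial_\nu\ve$) as well as to the bubble system $-\Delta\ve=\ue^p$, $-\Delta\ue=\ve^q$ to cancel the boundary integrals. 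Both arguments are the same collection of Green identities in a different order and of comparable length; the paper's version buys a little robustness in that it never needs the bubble PDE to prove this lemma (those equations only enter when one derives the alternative characterization of $\phie,\psie$ stated just before the lemma), whereas yours invokes them directly and is arguably closer in spirit to how one would guess the identity from the energy formulation $I(u,v)$.
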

\begin{proof}
Using the definitions of $\tildeue$ and $\phie$, one gets 
\[ \int_\Omega \tildeue K \tildeve = \int_\Omega \tildeue(\ue+\phie)= \int_\Omega \ue^{p+1} + \int_\Omega \ue^p \phie + \int_\Omega \we \ue +\int_\Omega \we \phie. \]

Also,  by using the definition of the operator $K$,
\begin{align*}
\int_\Omega \ue^p \phie +  \int_\Omega \we \phie &= \int_\Omega (\ue^p + \we)\phie =\int_\Omega \nabla K(\ue^p + \we) \cdot \nabla \phie  \\
&= \int_\Omega \nabla (K \tildeue - \ve + \ve) \cdot \nabla \phie =  \int_\Omega\nabla \psie \cdot \nabla \phie + \int_\Omega \nabla \ve \cdot \nabla \phie  \\
&=  \int_\Omega\nabla \psie \cdot \nabla \phie + \int_\Omega \nabla \ve \cdot \nabla K \tildeve -\int_\Omega \nabla \ue \cdot \nabla \ve \\
&=  \int_\Omega \nabla \psie \cdot \nabla \phie + \int_\Omega \ve^{q+1}+  \int_\Omega \ve \ze -\int_\Omega \nabla \ue \cdot \nabla \ve,
\end{align*}
which yields the conclusion. 
\end{proof}

\begin{lemma}\label{step2} 
Assume $p,q$ satisfy \eqref{eq:assumptions_pq}, \eqref{eq:assumptions_pq2} or \eqref{eq:assumptions_pq3}. Then the following estimates hold
\[ \int_\Omega \ve \ze, \int_\Omega \we \ue = o(\eps) \qquad \text{ as } \eps\to 0^+. \]
\end{lemma}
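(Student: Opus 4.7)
My plan is to exploit the fact that, by construction, $\we$ and $\ze$ are merely constants, so both integrals immediately factor as products of moments:
\[
\int_\Omega \ve \ze = -\frac{1}{|\Omega|}\left(\int_\Omega \ve\right)\left(\int_\Omega \ve^q\right), \qquad \int_\Omega \we \ue = -\frac{1}{|\Omega|}\left(\int_\Omega \ue\right)\left(\int_\Omega \ue^p\right).
\]
Each factor is then controlled via the asymptotic estimates \eqref{eq:norm_estimates}. Concretely, $\int_\Omega \ve \approx \varepsilon^{N/(p+1)}$ and $\int_\Omega \ue^p \approx \varepsilon^{N/(p+1)}$ unconditionally, while $\int_\Omega \ve^q$ and $\int_\Omega \ue$ share the same three distinct asymptotics depending on the position of $q$ relative to $N/(N-2)$ (so the two integrals in the statement will end up having identical orders in $\varepsilon$).

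When $q \geq N/(N-2)$, collecting the exponents and using the critical hyperbola \eqref{CH} collapses each product to $\varepsilon^{N-2}$, with an extra $|\log \varepsilon|$ factor in the equality case. Since $N \geq 4$, this is certainly $o(\varepsilon)$. In the remaining range $q < N/(N-2)$, the product equals $\varepsilon^{N(q+1)/(p+1)}$, and a short manipulation using \eqref{CH} shows that the exponent exceeds $1$ precisely when $q > 3/(N-2)$.

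The final task is then to verify $q > 3/(N-2)$ (in the regime $q<N/(N-2)$) under each of the hypotheses \eqref{eq:assumptions_pq}--\eqref{eq:assumptions_pq3}. Hypothesis \eqref{eq:assumptions_pq3} forces $q > 7/3 > 2 = N/(N-2)$ when $N=4$, so the problematic range is empty. Under \eqref{eq:assumptions_pq} one checks that $(N+2)/(2(N-2)) > 3/(N-2)$ whenever $N > 4$, automatic since $N \geq 6$. Under \eqref{eq:assumptions_pq2} one simply notes $q > 17/13 > 1 = 3/(N-2)$ for $N=5$. I do not anticipate any genuine obstacle here: once the factorisation is written down and \eqref{eq:norm_estimates} is invoked, the conclusion reduces to a case-by-case comparison of exponents in the critical hyperbola.
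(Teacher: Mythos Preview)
Your proposal is correct and follows essentially the same approach as the paper: factor each integral as a product of $L^1$-norms using that $\we,\ze$ are constants, apply the estimates \eqref{eq:norm_estimates}, and check that the resulting power of $\eps$ exceeds $1$ in each of the three regimes for $q$. The paper compresses your case analysis into the single observation $q>\tfrac{N+2}{2(N-2)}\geq \tfrac{3}{N-2}$ (valid for all $N\geq 4$ and implied by each of \eqref{eq:assumptions_pq}--\eqref{eq:assumptions_pq3}), but the content is the same.
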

\begin{proof}
Notice that, using the estimates  \eqref{eq:norm_estimates},
\[ 
\abs{\int_\Omega \ve \ze } \le \norm{\ze}_\infty \norm{\ve}_1 = \frac{1}{|\Omega|} \norm{\ve}_1\norm{\ve^q}_1 \approx
\begin{cases}
\eps^\frac{N}{p+1}\eps^\frac{N}{q+1}=\eps^{N-2}  & \text{ if } \frac{N}{N-2} < q \le \frac{N+2}{N-2}\\
\eps^{\frac{(N-2)^2}{2(N-1)}}\eps^\frac{N(N-2)}{2(N-1)}=\varepsilon^{N-2} |\log \varepsilon| & \text{ if }  q = \frac{N}{N-2}\\
\eps^\frac{N}{p+1}\eps^\frac{qN}{p+1}=\varepsilon^{\frac{N(q+1)}{p+1}} & \text{ if } q < \frac{N}{N-2}.
\end{cases}
\]
Observe that, since $N\geq 4$ and $q> \frac{N+2}{2(N-2)} \ge \frac{3}{N-2}$, in all three cases the quantities are $o(\eps)$.
For
\[ \abs{\int_\Omega \ue \we } \le \frac{1}{|\Omega|} \norm{\ue}_1\norm{\ue^p}_1,\]
using again  \eqref{eq:norm_estimates} we obtain the exact same decays, hence the proof is complete. 
\end{proof}

\begin{lemma}\label{step3} 
Assume $p,q$ satisfy \eqref{eq:assumptions_pq}, \eqref{eq:assumptions_pq2} or \eqref{eq:assumptions_pq3}. One has 
\[  \int_\Omega \nabla  \phie \cdot\nabla \psie =o(\eps)\qquad \text{ as } \eps\to 0^+. \]
\end{lemma}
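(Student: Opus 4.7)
The plan is to integrate by parts once, reducing $\int_\Omega \nabla\phie\cdot\nabla\psie$ to a boundary integral plus a volume term that is already controlled, and then estimate the boundary integral by duality between trace spaces.

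First, using that $-\Delta\psie=\we$ (a constant) and $\partial_\nu\psie=-\partial_\nu\ve$ on $\partial\Omega$, the divergence theorem gives
\[
\int_\Omega \nabla\phie\cdot\nabla\psie\,dx \;=\; \int_{\partial\Omega}\phie\,\partial_\nu\psie\,d\sigma - \int_\Omega \phie\,\Delta\psie\,dx \;=\; -\int_{\partial\Omega}\phie\,\partial_\nu\ve\,d\sigma + \we\int_\Omega\phie\,dx.
\]
Since $\int_\Omega\phie=-\int_\Omega\ue$, the last term equals $-\we\int_\Omega\ue = -\int_\Omega\we\ue$, which is $o(\eps)$ by Lemma~\ref{step2}. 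Hence the claim reduces to showing that the boundary integral $\int_{\partial\Omega}\phie\,\partial_\nu\ve\,d\sigma$ is $o(\eps)$.

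For the boundary term I would apply H\"older's inequality on $\partial\Omega$ with suitably chosen exponents and estimate the two factors separately. For $\partial_\nu\ve$, the key observation is that the boundary geometry gives extra vanishing near the concentration point: by \eqref{boundary} and \eqref{eq:unitarynormal} one has $x\cdot\nu(x)=O(|x'|^2)$ for $x\in\partial\Omega$ near $0$, so the radial identity $\nabla V(y)=V'(|y|)\,y/|y|$ yields the pointwise bound
\[
|\partial_\nu\ve(x)| \;\lesssim\; \eps^{-\frac{N}{q+1}-1}\,|V'(|x|/\eps)|\,|x'|,
\]
which after changing variables $y=x/\eps$ and using the decay \eqref{eq:decayestimate}--\eqref{eq:decayestimates2} for $V'$ gives a sharp $\eps$-power. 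For $\|\phie\|_{L^s(\partial\Omega)}$, the plan is to pass through the trace theorem and the elliptic regularity estimate for the non-homogeneous Neumann problem satisfied by $\phie$,
\[
\|\phie\|_{W^{2,r}(\Omega)} \;\le\; C\Bigl(\|\ze\|_r + \|\partial_\nu\ue\|_{W^{1-1/r,r}(\partial\Omega)} + \bigl|\textstyle\int_\Omega\phie\bigr|\Bigr),
\]
and to control each right-hand side term by the known decays \eqref{eq:norm_estimates} together with the analogue of the pointwise formula above for $\partial_\nu\ue$.

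The hard part will be matching the resulting $\eps$-exponents sharply. Depending on whether $q<N/(N-2)$, $q=N/(N-2)$ or $q>N/(N-2)$ (and similarly for $p$), both the decay rates for $U$, $V$ and the powers arising from the $L^r$-norms fall into different regimes, leading to a short but delicate case analysis in the spirit of Lemma~\ref{step2} but with both $\partial_\nu\ue$ and $\partial_\nu\ve$ entering simultaneously. The final exponent of $\eps$ will be strictly larger than~$1$ precisely under the condition $p,q>(N+2)/(2(N-2))$, consistent with the authors' remark that this technical hypothesis is exploited at the very end of the proof of Lemma~\ref{step3}.
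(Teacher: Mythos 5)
Your first integration by parts is correct and gives a genuinely different starting decomposition from the paper's. The paper instead begins with Cauchy--Schwarz,
\[
\Bigl|\int_\Omega\nabla\phie\cdot\nabla\psie\Bigr|\le\Bigl(\int_\Omega|\nabla\phie|^2\Bigr)^{1/2}\Bigl(\int_\Omega|\nabla\psie|^2\Bigr)^{1/2},
\]
and then estimates each factor by integrating $\int_\Omega|\nabla\phie|^2$ by parts and bounding the boundary term $\int_{\partial\Omega}\partial_\nu\phie\,(\phie-\bar\phie)$ by
$\|\partial_\nu\ue\|_{L^{2(N-1)/N}(\partial\Omega)}\,\|\phie-\bar\phie\|_{L^{2(N-1)/(N-2)}(\partial\Omega)}$, the latter being controlled by $\|\nabla\phie\|_2$ via the zero-average trace embedding. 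This produces a self-bounding inequality for $\|\nabla\phie\|_2$ whose only boundary input is the $L^{2(N-1)/N}(\partial\Omega)$-norm of $\partial_\nu\ue$, which is exactly what Lemma~\ref{eq:estimate_normal_derivative} provides. Multiplying the resulting bounds and using the critical hyperbola then gives $\eps^2$ (up to logs), hence $o(\eps)$ precisely when $q>(N+2)/(2(N-2))$.

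The genuine gap in your plan is the proposed estimate of $\|\phie\|_{L^s(\partial\Omega)}$ via $W^{2,r}$ regularity. Writing
$\|\phie\|_{W^{2,r}(\Omega)}\lesssim\|\ze\|_r+\|\partial_\nu\ue\|_{W^{1-1/r,r}(\partial\Omega)}+|\int_\Omega\phie|$
forces you to estimate a \emph{fractional} Sobolev norm of $\partial_\nu\ue$ on $\partial\Omega$. That quantity is strictly harder than the $L^{2(N-1)/N}(\partial\Omega)$-norm the paper uses: the Gagliardo seminorm carries an extra $\eps^{-(1-1/r)}$ under the concentration scaling, and near the concentration point the geometric gain $x\cdot\nu(x)=O(|x'|^2)$ that you correctly identify is a pointwise fact that does not transfer directly to the nonlocal seminorm. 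You give no plan for this step, and it is not clear that the final power of $\eps$ survives. To salvage the route you started, the natural move is to pair $\partial_\nu\ve$ with $\phie-\bar\phie$ using the trace $\|\phie-\bar\phie\|_{L^{2(N-1)/(N-2)}(\partial\Omega)}\le C\|\nabla\phie\|_2$ (the constant part of $\phie$ contributes a term equal to $-\int_\Omega\we\ue=o(\eps)$ by Lemma~\ref{step2}); but then you still need the bound on $\|\nabla\phie\|_2$, which is precisely the paper's Step~1 self-bounding argument, so you end up reproducing the paper's proof rather than circumventing it. In short: the identity is correct, the pointwise boundary bound for $\partial_\nu\ve$ is correct, but the elliptic estimate for $\phie$ that you propose is the wrong one and the key estimate $\|\nabla\phie\|_2=O(\eps^{-N/(p+1)+N/2})$ (with its log-corrected variants for small $q$) is what is actually missing.
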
 
\begin{proof}
We first notice that
\[ \abs{ \int_\Omega \nabla \phie \cdot \nabla \psie} \le \left( \int_\Omega \abs{\nabla \phie}^2 \right)^{1/2} \left( \int_\Omega \abs{\nabla \psie}^2 \right)^{1/2}. \]
Recall that 
\[ \left \{ u \in H^1(\Omega): \int_\Omega u=0 \right \} \hookrightarrow L^{\frac{2(N-1)}{N-2}}(\partial \Omega) \]
in the sense of traces, 
see for instance \cite[Theorem 5.36]{Adams} (Trace Inequality) combined with \cite[Theorem 8.11]{LiebLoss} (Poincar\'e's inequality).
Let us start with the case $N \ge 5$.

\noindent Step 1. Estimate of $\displaystyle \left(\int_\Omega \abs{\nabla \phie}^2\right)^2$.

By integrating by parts,
\begin{align*} \nonumber \int_\Omega \abs{\nabla \phie}^2  &= - \frac{1}{|\Omega|} \int_\Omega \ve^q \int_\Omega \phie + \int_{\partial \Omega} \partial_\nu \phie \left(\phie - \frac{1}{|\Omega|} \int_\Omega  \phie + \frac{1}{|\Omega|} \int_\Omega \phie \right) \\
\nonumber& \le - \frac{1}{|\Omega|} \int_\Omega \ve^q \int_\Omega \phie  + \frac{1}{|\Omega|}  \int_{\partial \Omega} \partial_\nu \phie \int_\Omega  \phie \\
\nonumber&\hspace{4cm} + \norm{\partial_\nu \phie}_{L^{\frac{2(N-1)}{N}}(\partial \Omega)} \norm{ \phie - \frac{1}{|\Omega|} \int_\Omega  \phie }_{L^{\frac{2(N-1)}{N-2}}(\partial \Omega)}   \\
\nonumber& = \frac{1}{|\Omega|} \int_\Omega \ve^q \int_\Omega \ue  +\frac{1}{|\Omega|} \int_{\partial \Omega} \partial_\nu \ue  \int_\Omega  \ue\\
\nonumber & \hspace{4cm} + \norm{\partial_\nu \ue}_{L^{\frac{2(N-1)}{N}}(\partial \Omega)} \norm{ \phie - \frac{1}{|\Omega|} \int_\Omega  \phie }_{L^{\frac{2(N-1)}{N-2}}(\partial \Omega)}  \\
& \label{stima grad phi}  \le \frac{1}{|\Omega|} \norm{ \ve^q}_1 \norm{ \ue}_1  + \frac{1}{|\Omega|}\norm{\ue}_1  \int_{\partial \Omega} \partial_\nu \ue  +
C\norm{\partial_\nu \ue}_{L^{\frac{2(N-1)}{N}}(\partial \Omega)} \left( \int_\Omega \abs{\nabla \phie }^2 \right)^{1/2} .
\end{align*}
We now point out that 
\[ \left| \int_{\partial \Omega} \partial_\nu U_\eps \right| \le c \norm{\partial_\nu \ue}_{L^{\frac{2(N-1)}{N}}(\partial \Omega)}. \]
This observation, combined with Lemma \ref{eq:estimate_normal_derivative} - which provides the estimates of $\| \partial_\nu U_\eps\|_{L^{\frac{2(N-1)}{N}}(\partial \Omega)}$ - and  \eqref{eq:norm_estimates}, gives
\[ \int_\Omega \abs{\nabla \phie}^2 \le 
\begin{cases}
h_q(\eps)\left( \left(\int_\Omega \abs{\nabla \phie}^2 \right)^{1/2} + \eps^{\frac{N}{q+1}} \right) + \eps^{\frac{2N}{q+1}} &\text{ if } q > \frac{N}{N-2} \\
h_q(\eps) \left( \left(\int_\Omega \abs{\nabla \phie}^2 \right)^{1/2} +  \eps^{\frac{N(N-2)}{2(N-1)}} |\log \eps| \right) + \eps^{\frac{N(N-2)}{N-1}} |\log \eps|^2 &\text{ if } q = \frac{N}{N-2} \\
h_q(\eps) \left( \left(\int_\Omega \abs{\nabla \phie}^2 \right)^{1/2} + \eps^{\frac{Nq}{p+1}} \right) + \eps^{\frac{2qN}{p+1}} &\text{ if } q < \frac{N}{N-2},
\end{cases} \]
where $h_q(\eps)$ is defined in \eqref{stima norma}. 

We claim that
\begin{equation}\label{stima phi 1_resumo}
\left( \int_\Omega \abs{\nabla \phie}^2 \right)^{1/2} =
\begin{cases}
O(\eps^{-\frac{N}{p+1} + \frac N2}) & \text{ if } q>\frac{N+4}{2(N-2)}\\
O(\eps^{-\frac{N}{p+1} + \frac N2} |\log \eps|^{\frac{N}{2(N-1)}}) &\text{ if } q>\frac{N+4}{2(N-2)}\\
O(\eps^{-\frac{N}{p+1} + q(N-2) -2 }) & \text{ if } q<\frac{N+4}{2(N-2)}
\end{cases}.\end{equation}
Indeed:
\begin{itemize}
    \item If $q > \frac{N}{N-2}$, then
\[ \int_\Omega \abs{\nabla \phie}^2 \le C\, \left( \eps^{-\frac{N}{p+1} + \frac N2} \left(\int_\Omega \abs{\nabla \phie}^2 \right)^{1/2}  + \eps^{-\frac{N}{p+1} +\frac N2 +\frac{N}{q+1}} + \eps^{\frac{2N}{q+1}} \right), \]
thus, since $-\frac{N}{p+1} + \frac N2<-\frac{N}{p+1} +\frac N2 +\frac{N}{q+1},\frac{2N}{q+1}$,
\begin{equation}\label{stima phi 1} \left( \int_\Omega \abs{\nabla \phie}^2 \right)^{1/2} \le C\, \eps^{-\frac{N}{p+1} + \frac N2}. \end{equation}
\item If $q=\frac{N}{N-2}(>\frac{N+4}{2(N-2)})$, then computations are analogous since we use the same estimate as before from \eqref{stima norma}, and we still  get  \eqref{stima phi 1}. 
\item For the case $\frac{N+4}{2(N-2)} < q < \frac{N}{N-2}$, we preliminary notice that
\[ \frac{q+1}{p+1} > \frac1 2 \quad \text{ if and only if } \quad q> \frac{N+4}{2(N-2)}. \]
Then
\[ \int_\Omega \abs{\nabla \phie}^2 \le C\, \left( \eps^{-\frac{N}{p+1} + \frac N2} \left(\int_\Omega \abs{\nabla \phie}^2 \right)^{1/2}  + \eps^{N \frac{q-1}{p+1} + \frac N2} + \eps^{\frac{2qN}{p+1}}\right), \]
and again \eqref{stima phi 1} holds.
\item Let now $q=\frac{N+4}{2(N-2)}$. Then 
\[ \int_\Omega \abs{\nabla \phie}^2 \le C\, \left( \eps^{-\frac{N}{p+1} + \frac N2}  |\log \eps|^{\frac{N}{2(N-1)}} \left(\int_\Omega \abs{\nabla \phie}^2 \right)^{1/2}  + \eps^{N \frac{q-1}{p+1} + \frac N2}  |\log \eps|^{\frac{N}{2(N-1)}}  + \eps^{\frac{2qN}{p+1}} \right), \]
which gives
\begin{equation*}\label{stima phi 2} \left( \int_\Omega \abs{\nabla \phie}^2 \right)^{1/2} \le C\, \eps^{-\frac{N}{p+1} + \frac N2} |\log \eps|^{\frac{N}{2(N-1)}}. \end{equation*}
\item Finally, let us consider $q < \frac{N+4}{2(N-2)}$. In this case, 
\[ \int_\Omega \abs{\nabla \phie}^2 \le C\, \left( \eps^{-\frac{N}{p+1} + q(N-2) -2 } \left(\int_\Omega \abs{\nabla \phie}^2 \right)^{1/2}  + \eps^{-\frac{N}{p+1} + q(N-2) -2 + \frac{Nq}{p+1}}   + \eps^{\frac{2qN}{p+1}}\right), \]
thus
\begin{equation*}\label{stima phi 3} \left( \int_\Omega \abs{\nabla \phie}^2 \right)^{1/2} \le C\,  \eps^{-\frac{N}{p+1} + q(N-2) -2 }. \end{equation*}
\end{itemize}
In conclusion, claim \eqref{stima phi 1_resumo} holds true.

\smallbreak

 \noindent Step 2. Estimate of $\displaystyle \left(\int_\Omega \abs{\nabla \psie}^2\right)^\frac{1}{2}$.

 Following the same proof above, we have
\[\int_\Omega |\nabla \Psi_\eps|^2 \le \frac{1}{|\Omega|} \norm{ \ue^p}_1 \norm{ \ve}_1  + \frac{1}{|\Omega|}\norm{\ve}_1  \int_{\partial \Omega} \partial_\nu \ve  +
C\norm{\partial_\nu \ve}_{L^{\frac{2(N-1)}{N}}(\partial \Omega)} \left( \int_\Omega \abs{\nabla \psie }^2 \right)^{1/2} .
\]
By Lemma \ref{eq:estimate_normal_derivative2},
\[ \int_\Omega \abs{\nabla \psie}^2 \le 
c\eps^{-\frac{N}{q+1}+\frac{N}{2}}\left( \left(\int_\Omega \abs{\nabla \psie}^2 \right)^{1/2} + \eps^{\frac{N}{p+1}} \right) + \eps^{\frac{2N}{p+1}}, \]
hence
\[ \left( \int_\Omega \abs{\nabla \psie}^2 \right)^{1/2} \le
\eps^{-\frac{N}{q+1} + \frac N2}. \]

\smallbreak

Therefore, by \eqref{stima phi 1} and combining the previous two steps, 
\[
\int_\Omega \nabla \phie \cdot \nabla \psie \le 
\begin{cases}
c\, \eps^{-\frac{N}{q+1}-\frac{N}{p+1}+N}=c\, \eps^{2} &\text{ if } q > \frac{N+4}{2(N-2)} \\
c\, \eps^2 |\log \eps|^{\frac{N}{2(N-1)}}   &\text{ if } q = \frac{N+4}{2(N-2)} \\
c\, \eps^{q(N-2)-\frac{N}{2}} &\text{ if } q < \frac{N+4}{2(N-2)}.
\end{cases}
\]
It is immediate that this is an $o(\eps)$ if $q > \frac{N+2}{2(N-2)}$.  

\smallbreak

We now consider the case $N=4$. 
Here, the same integration by parts as in Step 1, combined with estimates \eqref{stima norma N=4} and \eqref{eq:norm_estimates}, provides
\[ \int_\Omega \abs{\nabla \phie}^2 \le c \left(  \eps^{2 \frac{p-1}{p+1}} (\log \eps)^{\frac 23} \left( \int_\Omega \abs{\nabla \phie}^2  \right)^{\frac 12} + \eps^{4\frac{p-1}{p+1}} (\log \eps)^{\frac 23 } +  \eps^{4\frac{p-1}{p+1}} \right) \] 
from which 
\[ \left( \int_\Omega \abs{\nabla \phie}^2  \right)^{\frac 12} \le c \eps^{2\frac{p-1}{p+1}} (\log \eps)^{\frac 23 }. \]
Analogously, using Lemma \ref{eq:estimate_normal_derivative2}, 
\[ \left( \int_\Omega \abs{\nabla \psie}^2  \right)^{\frac 12} \le c \eps^{2\frac{q-1}{q+1}} (\log \eps)^{\frac 23 }. \]
Therefore,
\[ \int_\Omega \nabla \Phi_\eps \cdot \nabla \Psi_\eps \le c \eps^2 (\log \eps)^{\frac 43 } =o(\eps), \]
from which the conclusion. 
\end{proof} 

\begin{lemma}\label{step4} 
Assume $p,q$ satisfy \eqref{eq:assumptions_pq}, \eqref{eq:assumptions_pq2} or \eqref{eq:assumptions_pq3}. There exists a small positive value $\eta$ such that  
\[  \int_{\partial \Omega} \ue \partial_\nu \ve = \int_{\partial \Omega \cap B_\eta(0)} \ue \partial_\nu \ve + o(\eps), \quad \text{ and } \quad \int_{\partial \Omega \cap B_\eta(0)} \ue \partial_\nu \ve <0. \]
Analogously, one gets, for a suitable small $\eta$,
\[  \int_{\partial \Omega} \ve \partial_\nu\ue = \int_{\partial \Omega \cap B_\eta} \ve \partial_\nu \ue + o(\eps), \quad \text{ and } \int_{\partial \Omega \cap B_\eta(0)} \ve \partial_\nu \ue <0.\]
\end{lemma}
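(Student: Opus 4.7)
The plan is to split the integral as $\int_{\partial\Omega}=\int_{\partial\Omega\cap B_\eta(0)}+\int_{\partial\Omega\setminus B_\eta(0)}$ and handle the two pieces via different methods: decay estimates for the far piece, and a rescaling argument exploiting the positive mean curvature $H(0)>0$ for the near piece. The radial monotonicity of $U$ and $V$, together with the formula $\partial_\nu V_\eps(x)=V_\eps'(|x|)\,\frac{x\cdot\nu(x)}{|x|}$ (with $V_\eps'<0$), play central roles.

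For the far piece, I would use the pointwise decay estimates \eqref{eq:decayestimate} together with \eqref{eq:decayestimates2} to control $U_\eps(x)$ and $|\nabla V_\eps(x)|$ on $\{|x|\ge\eta\}$. After rescaling and collecting the $\eps$-powers via the critical hyperbola \eqref{CH} (so that $\tfrac{N}{p+1}+\tfrac{N}{q+1}=N-2$), the product $U_\eps(x)\,|\nabla V_\eps(x)|$ is bounded by $C\eps^{N-2}/|x|^{2N-3}$ when $q\ge N/(N-2)$ (with an extra $|\log\eps|$ at the threshold), and by $C\eps^{q(N-2)-2}/|x|^{q(N-2)+N-3}$ when $q<N/(N-2)$. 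Integrating over the compact surface $\partial\Omega\setminus B_\eta(0)$ yields an error of order $\eps^{\min(N-2,\,q(N-2)-2)}$, and the hypothesis $q>(N+2)/(2(N-2))$ from \eqref{eq:assumptions_pq}--\eqref{eq:assumptions_pq3} implies $q>3/(N-2)$ for $N\ge 4$, so this bound is $o(\eps)$.

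For the near piece, I would parametrize $\partial\Omega\cap B_\eta(0)$ by $x'\in B_\eta'\subset\R^{N-1}$ with $x_N=\rho(x')$, so that \eqref{eq:unitarynormal} and a direct expansion give $x\cdot\nu(x)=\sum_j\rho_j x_j^2+O(|x'|^3)$ and $dS=(1+O(|x'|^2))\,dx'$. The change of variables $y=x/\eps$ and the relation \eqref{CH} reduce the integral, up to an $o(\eps)$ error, to
\[
-\eps\int_{|y'|<\eta/\eps}U(|y'|)\,|V'(|y'|)|\,\frac{\sum_{j=1}^{N-1}\rho_j\,y_j^2}{|y'|}\,dy'.
\]
Pointwise negativity fails unless all $\rho_j>0$; the key observation is that the weight $U(|y'|)|V'(|y'|)|/|y'|$ is radial in $\R^{N-1}$, so rotational symmetry permits replacing $\sum_j\rho_j y_j^2$ by $\frac{|y'|^2}{N-1}\sum_j\rho_j=\frac{|y'|^2}{2}H(0)$ under the integral sign. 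The remaining scalar integral $\int_{\R^{N-1}}|y'|\,U(|y'|)\,|V'(|y'|)|\,dy'$ converges to a strictly positive constant, with integrability at infinity ensured by $q>3/(N-2)$ together with \eqref{eq:decayestimate}--\eqref{eq:decayestimates2}, and near $0$ by smoothness of $U$ and $V'$. Since $H(0)>0$, we obtain a leading contribution $-c\,\eps$ with $c>0$, which gives both the claimed $o(\eps)$ remainder and the strict negativity of the near integral.

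The analogous statements for $\int_{\partial\Omega}V_\eps\,\partial_\nu U_\eps$ follow by swapping the roles of $U$ and $V$; the decay conditions are symmetric under $(p,q)\leftrightarrow(q,p)$, and the corresponding scalar integral $\int_{\R^{N-1}}|y'|V(|y'|)|U'(|y'|)|dy'$ again converges under $q>3/(N-2)$. The principal technical obstacle is bookkeeping across the three decay regimes for $U$ (namely $q\gtreqless N/(N-2)$): verifying that the far-field bound is $o(\eps)$ in each regime, and that the rescaled integrand near $0$ admits a dominated-convergence passage to the limit that exhibits the nontrivial $\eps$-order coefficient proportional to $H(0)$.
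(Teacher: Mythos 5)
Your proof is correct, and the far-piece estimate coincides with the paper's (bound both $U_\eps$ and $\partial_\nu V_\eps$ in $L^\infty(\partial\Omega\setminus B_\eta)$ via \eqref{eq:decayestimate}--\eqref{eq:decayestimates2}, use \eqref{CH} to collect powers of $\eps$, and invoke $q>\frac{N+2}{2(N-2)}\ge\frac{3}{N-2}$). Where you differ is the near piece: the paper argues by \emph{pointwise} negativity of the integrand $U_\eps\,\partial_\nu V_\eps$ on $\partial\Omega\cap B_\eta(0)$, which follows from $V'<0$, $U>0$ and $\sum_j\rho_j x_j^2+O(|x'|^3)>0$; the latter inequality requires that \emph{all} $\rho_j>0$, i.e.\ that $x_0$ is chosen at a point where all principal curvatures of $\partial\Omega$ are positive (such a point always exists on a bounded $C^2$ domain, and this is implicitly what the paper means by ``a point where the curvature is positive'', consistently with how the set $\Sigma$ in Lemma~\ref{step6} is parametrized). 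You instead rescale, drop lower-order terms, and symmetrize the angular factor $\sum_j\rho_j y_j^2$ to $\frac{H(0)}{2}|y'|^2$ using the radiality of $U(|y'|)|V'(|y'|)|/|y'|$ in $\R^{N-1}$. This buys two things: it requires only $H(0)>0$ rather than strict convexity at $x_0$, and it produces an explicit $-c\,H(0)\,\eps$ expansion of the near piece. Neither of these is needed for the way Lemma~\ref{step4} is used in the proof of Proposition~\ref{claim} (only the sign and the $o(\eps)$ bound on the far piece are used), but the argument is sound and you correctly check integrability of the resulting radial profile at infinity across the three decay regimes using $q>\frac{3}{N-2}$.
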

\begin{proof}
Let us preliminary notice that $rV'(r) \approx r^{2-N}$ as $r\to \infty$,
and
\begin{equation}\label{normal derivative}
\partial_\nu V_\eps(x) = \eps^{-\frac{N}{q+1} - 1}  V'(|x'|/\eps) \frac{ \sum_{j=1}^{N-1} \rho_j x_j^2 + O(|x'|^3)}{|x|\sqrt{1+|x'|^2}}. 
\end{equation}
Therefore,  taking into account \eqref{eq:norm_estimates},
\[ 
\norm{\partial_\nu \ve}_{L^\infty(\partial \Omega \setminus B_\eta)} \approx \eps^{-\frac{N}{q+1}+N-2}= \eps^{\frac{N}{p+1}}. \]
Recall also from \cite[pp. 2360--2362]{HMV} that 
\[ \norm{U_\eps}_{L^\infty(\partial \Omega \setminus B_\eta)} \approx \|U_\eps\|_1 \approx
\begin{cases}
\eps^{\frac{N}{q+1}} & \text{ if } q > \frac{N}{N-2} \\
\eps^{\frac{N(N-2)}{2(N-1)}} |\log \eps|  & \text{ if } q = \frac{N}{N-2}  \\
 \eps^{\frac{qN}{p+1}}  & \text{ if } q < \frac{N}{N-2}. 
\end{cases} 
\]

Thus 
\begin{align*}
\left|\int_{\partial \Omega \setminus B_\eta} \ue \partial_\nu \ve \right| &\le c \norm{\ue}_{L^\infty(\partial \Omega \setminus B_\eta)}\norm{\partial_\nu \ve}_{L^\infty(\partial \Omega \setminus B_\eta)}=\left.\begin{cases}
O(\eps^{N-2})& \text{ if } q > \frac{N}{N-2}\\
O(\eps^{N-2}\log \eps)& \text{ if } q = \frac{N}{N-2}\\
O(\eps^{q(N-2)-2})& \text{ if } q < \frac{N}{N-2}
\end{cases}\right\}=o(\eps).  \end{align*}
where in the last identity we use the fact that $N\geq 4$ and $q >\frac{N+2}{2(N-2)} \ge \frac{3}{N-2}$.
Moreover, \eqref{normal derivative} shows that 
\[  \int_{\partial \Omega \cap B_\eta} \ue \partial_\nu \ve <0, \]
if $\eta$ is small enough;
indeed, $V'<0$ and $0$ is a point of positive curvature and $\sum_{j=1}^{N-1} \rho_j x_j^2+O(|x'|^3)>0$ as $|x'|\to 0$ (recall \eqref{eq:boundary close to 0}--\eqref{boundary}). The lemma is proved. 
\end{proof}

\begin{lemma}\label{step5} 
Assume $p,q$ satisfy \eqref{eq:assumptions_pq}, \eqref{eq:assumptions_pq2} or \eqref{eq:assumptions_pq3}. As $\eps\to 0^+$, one has
\[ \| \tildeue \|_{\alpha} \le \norm{\ue}_{p+1}^p + o(\eps), \qquad \| \tildeve \|_{\beta} \le \norm{\ve}_{q+1}^q + o(\eps). \]
\end{lemma}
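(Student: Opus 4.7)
Since $\alpha = (p+1)/p$, I have the identity $\norm{\ue^p}_\alpha^\alpha = \int_\Omega \ue^{p+1} = \norm{\ue}_{p+1}^{p+1}$, so the inequality I want is equivalent to $\norm{\tildeue}_\alpha \le \norm{\ue^p}_\alpha + o(\eps)$. Because $0\in \partial \Omega$ is a smooth point, the rescaling $y=x/\eps$ shows $\int_\Omega \ue^{p+1}\to \tfrac12 S_{p,q}^{N/2}>0$ as $\eps\to 0^+$ (the rescaled domain $\Omega/\eps$ tends to a half-space), so $\norm{\ue^p}_\alpha$ stays bounded below away from $0$. By concavity of $t\mapsto t^{1/\alpha}$, the claim therefore reduces to
\[
\norm{\tildeue}_\alpha^\alpha \le \norm{\ue^p}_\alpha^\alpha + o(\eps).
\]

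For this step I would exploit convexity of $x\mapsto \abs{x}^\alpha$ for $\alpha\ge 1$: the subgradient inequality at $a+b$ yields the pointwise bound
\[
\abs{a+b}^\alpha \le \abs{a}^\alpha + \alpha \abs{a+b}^{\alpha-1}\mathrm{sgn}(a+b)\,b.
\]
Applying this with $a = \ue^p\ge 0$ and $b = \we< 0$ and integrating gives
\[
\norm{\tildeue}_\alpha^\alpha \le \norm{\ue^p}_\alpha^\alpha + \alpha \we \int_\Omega \abs{\tildeue}^{\alpha-1}\mathrm{sgn}(\tildeue).
\]
Since $\we<0$, the contribution of the region $\{\tildeue\ge 0\}$ to this correction is nonpositive and can be discarded. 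On its complement $\{\tildeue<0\}$ one has $0\le \ue^p<\abs{\we}$, hence $\abs{\tildeue}\le \abs{\we}$, so the whole correction is bounded above by $\alpha\abs{\we}^\alpha\abs{\Omega}$.

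To conclude, the estimates \eqref{eq:norm_estimates} give $\abs{\we} = \abs{\Omega}^{-1}\norm{\ue^p}_1 \lesssim \eps^{N/(p+1)}$, hence $\abs{\we}^\alpha \lesssim \eps^{N\alpha/(p+1)} = \eps^{N/p}$, which is $o(\eps)$ precisely when $p<N$. This is where conditions (i)--(iii) are used, as noted in the remark following Theorem~\ref{main thm}: in case (i) the combination of $q\le p$, $q>\frac{N+2}{2(N-2)}$, and \eqref{CH} forces $p\le \frac{2(N-1)(N+2)}{(N-2)^2}<N$ for $N\ge 6$ (elementary inequality); in cases (ii) and (iii) the stronger restrictions on $q$ are by design equivalent to $p<N=5$ and $p<N=4$. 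An identical argument with the roles of $(p,U)$ and $(q,V)$ interchanged handles $\norm{\tildeve}_\beta$, the required bound $q<N$ being automatic since $q\le p<N$. The only real subtlety is choosing the convex expansion so that the sign of $\we$ can be exploited; after that, everything reduces to reading off \eqref{eq:norm_estimates}.
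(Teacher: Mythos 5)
Your argument for $\|\tilde U_\eps\|_\alpha$ is correct and is essentially the paper's approach (bound the $\alpha$-th power, then expand $(\cdot)^{1/\alpha}$), with one genuine improvement: by using the one-sided convexity inequality at $a+b$ together with the sign $W_\eps<0$, you avoid the term $\int_\Omega U_\eps|W_\eps|$ that the paper has to estimate separately via the symmetric two-sided bound $||a+b|^\alpha-|a|^\alpha|\le C(|a|^{\alpha-1}|b|+|b|^\alpha)$. Your identification of the constraint $p<N$ (coming from $|W_\eps|^\alpha\approx\eps^{N\alpha/(p+1)}=\eps^{N/p}$) matches exactly the role of assumptions (i)--(iii) flagged in the paper's remark.

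However, the final sentence --- ``an identical argument with the roles of $(p,U)$ and $(q,V)$ interchanged handles $\|\tilde V_\eps\|_\beta$, the required bound $q<N$ being automatic'' --- contains a gap. The roles are not symmetric, because \eqref{eq:norm_estimates} (with the convention $q\le p$, $q\le\frac{N+2}{N-2}$) gives $\|U_\eps^p\|_1\approx\eps^{N/(p+1)}$ unconditionally, but for $\|V_\eps^q\|_1$ (hence $|Z_\eps|$) there are three regimes depending on whether $q>\frac{N}{N-2}$, $q=\frac{N}{N-2}$ or $q<\frac{N}{N-2}$. The clean bound $|Z_\eps|^\beta\lesssim\eps^{N/q}$ (requiring $q<N$) holds only in the first regime. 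In the third regime one has $|Z_\eps|^\beta\approx\eps^{N(q+1)/(p+1)}$, and $o(\eps)$ instead requires $N(q+1)>p+1$; in the borderline case there is an extra logarithmic factor. These sub-cases do occur under the stated assumptions (e.g.\ for $N=6$ one can have $q\in(1,\tfrac32)$, and for $N=5$ one can have $q\in(\tfrac{17}{13},\tfrac53)$), so they need to be verified, as the paper does explicitly after \eqref{stima:211}. The required inequality $N(q+1)>p+1$ does indeed hold under \eqref{eq:assumptions_pq}--\eqref{eq:assumptions_pq3}, so your method closes, but the justification as written is incomplete: you should spell out the three regimes for $|Z_\eps|$ rather than appeal to symmetry.
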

\begin{proof}
We preliminary notice that, exploiting estimates in \eqref{eq:norm_estimates},
\begin{align*} 
\int_\Omega \ue |\we| &=\frac{1}{|\Omega|}\|U_\eps\|_1\|U_\eps^p\|_1=\left.\begin{cases}
O(\eps^{N-2})& \text{ if } q > \frac{N}{N-2}\\
O(\eps^{N-2}\log \eps)& \text{ if } q = \frac{N}{N-2}\\
O(\eps^{q(N-2)-2})& \text{ if } q < \frac{N}{N-2}
\end{cases}\right\}\, =o(\eps).
\end{align*}
By exploiting the inequality:
\[
|| a+\left.b\right|^\alpha-|a|^\alpha \mid \leq C\left(|a|^{\alpha-1}|b|+|b|^\alpha\right) \quad \forall a, b \in \mathbb{R}
\]
(recall that $\alpha=\frac{p+1}{p}>1$, so this is a consequence of a Taylor expansion with Lagrange remainder), and once again \eqref{eq:norm_estimates}, we obtain
\begin{align*} 
\| \tildeue \|_{\alpha} ^\alpha &=\int_\Omega(U_\eps^p+W_\eps)^\alpha \leq \int_\Omega U_\eps^{p+1}+C\int_\Omega (|W_\eps|^\alpha+U_\eps|W_\eps|) \le C\left( \norm{\ue}_{p+1}^{p+1} + \norm{\ue^p}_1^\alpha + \int_\Omega \ue |\we| \right)\\
& \le C\left( \norm{\ue}_{p+1}^{p+1} + \norm{\ue^p}_1^\alpha  + o(\eps) \right)\le C \left( \norm{\ue}_{p+1}^{p+1} + O(\eps^\frac{N}{p})  + o(\eps) \right) \le C\norm{\ue}_{p+1}^{p+1} + o(\eps).  \end{align*}
Notice that, due to \eqref{eq:assumptions_pq}, \eqref{eq:assumptions_pq2} and \eqref{eq:assumptions_pq3}, 
one has $p < N$, hence $O(\eps^\frac{N}{p})=o(\eps)$. 
The conclusion follows using the Taylor expansion
\begin{equation}\label{Taylor}
    (y+x)^s=y^s +s y^{s-1} x  +y^s o\left(xy^{-1} \right) \qquad \text{ as } xy^{-1}\to 0, \end{equation}
with $s=1/\alpha$, $y=\norm{U_\eps}^{p+1}_{p+1}$ and $x=o(\eps)$.
Notice that 
\[ \norm{U_\eps}^{p+1}_{p+1}=\int_{\Omega/\eps} U(y)^{p+1} \, dy 
\begin{cases}
\ge \int_{\frac 1 \eps \left( B_\eta(0) \cap \Omega \right)} U(y)^{p+1} \, dy \to \int_{\R^N_+} U(y)^{p+1} \, dy = \frac 12 S^{\frac N2} \\
\leq \int_{\R^N_+} U(y)^{p+1} \, dy = \frac 12 S^{\frac N2}
\end{cases}
\quad \text{ as } \eps \to 0,
\]
hence it is bounded from above, and below away from zero.

Similarly we can treat $\tildeve$. 
This time we get
\[ \| \tildeve \|_{\beta}^\beta \le 
\begin{cases}
C \|\ve\|_{q+1}^{q+1} + O(\eps^{N/q}) + o(\eps) &\text{ if $q > \frac{N}{N-2}$} \\
C \|\ve\|_{q+1}^{q+1} + O(\eps^{\frac{N-2}{2}}|\log \eps|^{\frac{2(N-1)}{N}}) + o(\eps) &\text{ if $q = \frac{N}{N-2}$} \\
C \|\ve\|_{q+1}^{q+1} + O(\eps^{\frac{N(q+1)}{p+1}}) + o(\eps) &\text{ if $q < \frac{N}{N-2}$.}
\end{cases} \]
Notice that in case $N=4$, then \eqref{eq:assumptions_pq3} implies $q > 2=\frac{N}{N-2}$, hence we are in the first case, and using $q<N$, 
\begin{equation}\label{stima:211} \| \tildeve \|_{\beta}^\beta \le C \|\ve\|_{q+1}^{q+1} +o(\eps). \end{equation}
If $N \ge 5$, any of the three cases above can occur, however $N > q$, $\frac{N-2}{2} > 1$ and $\frac{N(q+1)}{p+1}>1$, hence 
\eqref{stima:211} still holds. 
\end{proof}

\begin{lemma}\label{step6} 
Assume $p,q$ satisfy \eqref{eq:assumptions_pq}, \eqref{eq:assumptions_pq2} or \eqref{eq:assumptions_pq3}.
One has, as $\eps\to 0^+$,
\[ \norm{\ue}_{p+1}^{p+1} \le \frac12 S_{p,q}^{\frac N2} -C_1 \eps  + o(\eps), \qquad  \norm{\ve}_{q+1}^{q+1} \le \frac12 S_{p,q}^{\frac N2}  -C_2 \eps  + o(\eps), \]
for the constants
\[
C_1:=\frac{H(0)}{2}\int_{\R^{N-1}} |y'|^2U^{p+1}(y',0)\, dy'>0,\quad C_2:=\frac{H(0)}{2}\int_{\R^{N-1}} |y'|^2V^{q+1}(y',0)\, dy'>0,
\]
where $H(0)$ is the mean curvature at $0$ (recall \eqref{boundary}).
\end{lemma}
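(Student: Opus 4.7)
The plan is a standard boundary-expansion computation: since $\ue$ concentrates at the boundary point $0$, which has positive mean curvature $H(0)>0$, only "half" of the full-space $L^{p+1}$-mass of $U$ is captured by integration against $\Omega$, and the first-order correction reflects the curvature. Rescaling by $y=x/\eps$,
\[
\|\ue\|_{p+1}^{p+1} = \int_{\Omega/\eps} U(y)^{p+1}\, dy,
\]
and I would split this integral into its restrictions to $B_{\eta/\eps}(0)$ and to its complement. For the far part, using \eqref{eq:decayestimate} together with the identity $(q(N-2)-2)(p+1)=N(q+1)$ (which follows from \eqref{eq:exponent_of_scaling}), a direct polar-coordinate computation gives
\[
\int_{|y|>\eta/\eps} U^{p+1}\, dy = O\bigl(\eps^{N(p+1)/(q+1)}\bigr) \text{ if } q\ge \tfrac{N}{N-2},\qquad O(\eps^{Nq}) \text{ if } q<\tfrac{N}{N-2},
\]
(with a logarithmic correction in the borderline case), and both exponents exceed $1$ under the hypotheses of Theorem \ref{main thm}, hence the tail is $o(\eps)$. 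The same bound controls $\int_{\R^N_+ \setminus B_{\eta/\eps}} U^{p+1}$.

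For the near-zero part, the local parametrization \eqref{eq:boundary close to 0}, combined with $\rho_j\ge 0$, yields
\[
(\Omega/\eps)\cap B_{\eta/\eps} \;=\; \bigl(\R^N_+ \cap B_{\eta/\eps}\bigr) \setminus A_\eps,\qquad A_\eps:=\{(y',y_N):\, 0<y_N<\tfrac{1}{\eps}\rho(\eps y'),\ |y'|<\eta/\eps\},
\]
so that
\[
\int_{(\Omega/\eps)\cap B_{\eta/\eps}} U^{p+1}\,dy = \int_{\R^N_+\cap B_{\eta/\eps}} U^{p+1}\,dy - \int_{A_\eps} U^{p+1}\,dy.
\]
By radial symmetry of $U$ and \eqref{eq:Spq_achieved}, $\int_{\R^N_+} U^{p+1} = \tfrac{1}{2} S_{p,q}^{N/2}$, so combining with the tail bound the first term on the right equals $\tfrac{1}{2} S_{p,q}^{N/2}+o(\eps)$.

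For the thin-layer integral I would Taylor-expand $\rho(\eps y')/\eps = \eps\sum_j \rho_j (y_j')^2 + O(\eps^2|y'|^3)$, which gives, for each fixed $y'$,
\[
\frac{1}{\eps}\int_{0}^{\rho(\eps y')/\eps} U(y',y_N)^{p+1}\,dy_N \longrightarrow \sum_{j=1}^{N-1} \rho_j (y_j')^2\,U(y',0)^{p+1} \qquad \text{as } \eps\to 0.
\]
Radial monotonicity of $U$ provides the dominating function $C|y'|^2\, U(y',0)^{p+1}$, whose integrability on $\R^{N-1}$ follows from \eqref{eq:decayestimate} and \eqref{eq:exponent_of_scaling}. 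Dominated convergence, combined with the symmetry identity $\int_{\R^{N-1}}(y_j')^2 U(y',0)^{p+1}\,dy' = \frac{1}{N-1}\int_{\R^{N-1}}|y'|^2 U(y',0)^{p+1}\,dy'$ and $\sum_j \rho_j = (N-1)H(0)/2$ (from \eqref{boundary}), yields $\eps^{-1}\int_{A_\eps}U^{p+1}\,dy \to C_1$. Putting the three contributions together produces the first bound. The second bound for $\|\ve\|_{q+1}^{q+1}$ follows by the identical argument applied to $V$, which by \eqref{eq:decayestimate} enjoys the simpler universal decay $V(r)\approx r^{-(N-2)}$.

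The main obstacle is the bookkeeping of the decay estimates: ensuring both that the tail $\int_{|y|>\eta/\eps}U^{p+1}$ is $o(\eps)$ and that the dominating integrand $|y'|^2 U(y',0)^{p+1}$ is integrable on $\R^{N-1}$ requires a case analysis depending on whether $q>N/(N-2)$ or $q\le N/(N-2)$, and it is exactly here that the restrictions (i)--(iii) of Theorem \ref{main thm}, together with \eqref{eq:exponent_of_scaling}, intervene.
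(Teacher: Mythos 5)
Your argument is correct and follows essentially the same strategy as the paper's proof: split into near-boundary and far contributions, show that both the far tail and the half-ball excess are $o(\eps)$, and extract the leading $C_1\eps$ term from the thin boundary layer $\{0<y_N<\rho(\eps y')/\eps\}$ via a Taylor expansion of $\rho$, using the curvature identity $\sum_j\rho_j=(N-1)H(0)/2$. The only organizational differences are that you rescale $y=x/\eps$ at the outset and close the thin-layer limit with dominated convergence, whereas the paper rescales only inside the $\Sigma$-integral and justifies the limit with an explicit second-order Taylor bound on $f(s)=\int_0^s U^{p+1}(y',y_N)\,dy_N$; these are cosmetically different but mathematically equivalent.
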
 
\begin{proof}
Take $\eta$ as in \eqref{eq:boundary close to 0}, and recall that we assume that $0\in \partial \Omega$ has positive mean curvature. Let us first observe that 
\begin{align*}
\int_\Omega \ue^{p+1} &= \int_{\Omega \cap B_\eta(0)} \ue^{p+1} + \int_{\Omega \setminus B_\eta(0)} \ue^{p+1} = \frac 12 \int_{B_\eta(0)} \ue^{p+1} - \int_\Sigma \ue^{p+1} + \int_{\Omega \setminus B_\eta(0)} \ue^{p+1} \\
 & \le  \frac 12 \int_{\R^N}  \ue^{p+1} - \int_\Sigma \ue^{p+1}  + \int_{\R^N \setminus B_\eta(0)} \ue^{p+1},
\end{align*}
where
\begin{align*} \Sigma:= B_\eta^+(0) \cap \Omega^c&=\{x\in B_\eta(0)\cap \Omega^c: x_n>0\}=\{x\in B_\eta(0):\ 0<x_n<\sum_{j=1}^{N-1} \rho_j x_j^2 + O(|x'|^3)\}.
\end{align*}
Now, recalling Section \ref{sec:bubbles},
\begin{equation}\label{stima1} \frac 12 \int_{\R^N}  \ue^{p+1}= \frac 12 S_{p,q}^{\frac N2}. \end{equation}
Also, in the case $q > \frac{N}{N-2}$, by the decay estimates \eqref{eq:decayestimate},
\begin{align} 
\nonumber \int_{\R^N \setminus B_\eta} \ue^{p+1} &= \eps^{-N} \int_{B_\eta^c} U(|x|/\eps)^{p+1} = O(\eps^{-N+(p+1)(N-2)}) \int_\eta^{+\infty} r^{-(p+1)(N-2)+N-1} \\
 &= O(\eps^{-N+(p+1)(N-2)})=o(\eps)  \label{stima2} \end{align} 
since $p>\frac{3}{N-2}$. Similarly in the other cases. 

We now estimate the integral on $\Sigma$. We first notice that 
\begin{align*} \int_\Sigma \ue^{p+1}  &=\eps^{-N} \int_\Sigma U(x/\eps)^{p+1} = \eps^{-N}\int_{\Delta_\eta} \int_0^{\sum_j \rho_j x_j^2 + O(|x'|^3)} U(x/\eps)^{p+1} dx_N \, dx'\\
&=\int_{\Delta_\eta/\eps} \int_0^{\eps \sum_j \rho_j y_j^2 + \eps^2 O(|y'|^3)} U(y)^{p+1} dy_N \, dy'.\end{align*}
Now we claim that 
\begin{equation}\label{claim expansion} \int_0^s U(|y|)^{p+1} dy_N  = U^{p+1}(y', 0)(s+ O(s^2)). \end{equation}
Indeed, consider 
\[ f(s)=\int_0^s U^{p+1}(y) dy_N \]
and Taylor expand it. Notice that 
\[ 
f'(0)=U^{p+1}(y',0),  \qquad f''(s)= (p+1) U^p(y', s) \frac{\partial U}{\partial y_N} (y', s). \]
Also, 
 \begin{equation}\label{bound second derivative} |f''(s)| \le CU^{p+1}(y', 0) \end{equation}
where $C$ is a positive constant independent of $y', s$: to get this bound, one observes that $U$ has a maximum at $0$, and 
\[ \abs{\frac{\partial U}{\partial y_N}(y)} \le \abs{\frac{\partial U}{\partial y_N}(y', 0)} \le C U(y', 0) \]
if $|y|>R$ with $R$  big enough, by decay estimates in \eqref{decay derivative}, and
\[ \abs{\frac{\partial U}{\partial y_N}(y)} \le C U(y', 0) \]
if $|y| \le R$, for some positive constant $C=C(R)>0$, since they are all bounded quantities. Hence \eqref{bound second derivative} holds, and the claim \eqref{claim expansion} is proved. 

Therefore, 
\[ \int_\Sigma \ue^{p+1}  = \int_{\Delta_\eta/\eps} U^{p+1}(y', 0)  \left[ ( \eps \sum_j \rho_j y_j^2 + \eps^2 O(|y'|^3) ) + O( \eps \sum_j \rho_j y_j^2 + \eps^2 O(|y'|^3) )^2\right].
\]
Notice that 
\[\int_{\R^{N-1}} |y'|^{3} U^{p+1}(y', 0)\, dy',\  \int_{\R^{N-1}} |y'|^6 U^{p+1}(y', 0)\, dy' < +\infty, \]
due to the decay estimates for $U$, \eqref{eq:decayestimate}, 
thus 
\begin{align} \nonumber \int_\Sigma \ue^{p+1}  &= \eps \int_{\R^{N-1}} \sum_j \rho_j y_j^2 \, U^{p+1}(y', 0) + o(\eps) \\
\nonumber &= \eps \sum_j \rho_j \int_{\R^{N-1}} y_1^2 \, U^{p+1}(y', 0) + o(\eps) \\
\label{stima3} &= C_1\eps + o(\eps).
\end{align}
The conclusion follows due to \eqref{stima1}, \eqref{stima2} and \eqref{stima3}. The proof for $V_\eps$ is analogous.
\end{proof}

We are now ready to prove Proposition \ref{claim}.
\begin{proof}[Proof of Proposition \ref{claim}] 
Since for any $0 < \lambda <1$ 
\begin{align*} \int_\Omega \nabla \ue \cdot \nabla \ve 
&=\lambda \int_\Omega \nabla \ue \cdot \nabla \ve + (1-\lambda) \int_\Omega \nabla \ue \cdot \nabla \ve \\
&= \lambda \int_\Omega \ue^{p+1}  + (1-\lambda)\int_\Omega \ve^{q+1} +\lambda \int_{\partial \Omega} \ue \partial_\nu \ve +(1-\lambda) \int_{\partial \Omega} \ve \partial_\nu \ue, 
\end{align*}
and also recalling Lemmas \ref{step1}, \ref{step2} and \ref{step3}, we have 
\[ \int_\Omega \tildeue K \tildeve = (1-\lambda) \int_\Omega \ue^{p+1}  + \lambda \int_\Omega \ve^{q+1} - \lambda \int_{\partial \Omega} \ue \partial_\nu \ve -(1-\lambda) \int_{\partial \Omega} \ve \partial_\nu \ue +o(\eps).\]
Therefore, using Lemma \ref{step4} and Young's inequality, 
\begin{align*} \int_\Omega \tildeue K \tildeve& \ge (1-\lambda) \int_\Omega \ue^{p+1}  + \lambda \int_\Omega \ve^{q+1} +o(\eps) \ge \norm{\ue}_{p+1}^{(1-\lambda) (p+1)}  \norm{\ve}_{q+1}^{\lambda(q+1)} +o(\eps). \end{align*}
We now exploit Lemma \ref{step5} and \eqref{Taylor} to obtain 
\begin{align}\nonumber
\frac{\int_\Omega \tildeue K \tildeve}{\| \tildeue \|_\alpha \| \tildeve \|_\beta}  &\ge \frac{\norm{\ue}_{p+1}^{(1-\lambda) (p+1)}  \norm{\ve}_{q+1}^{\lambda(q+1)} +o(\eps)}{(\|U_\eps\|_{p+1}^p+o(\eps))(\|V_\eps\|_{p+1}^q+o(\eps))} \\
&\ge \norm{\ue}_{p+1}^{(p+1)(1-\lambda - \frac{p}{p+1})} \norm{\ve}_{q+1}^{(q+1)(\lambda - \frac{q}{q+1})}+ o(\eps).  \label{stima basso}
\end{align}

We choose $\lambda$ such that 
\[ \frac{1}{p+1} <\lambda < \frac{q}{q+1}, \]
which exists since $pq>1$.

By the Taylor expansion \eqref{Taylor}  and Lemma \ref{step6} we obtain, since $1-\lambda-\frac{p}{p+1}=\frac{1}{p+1}-\lambda<0$,
\begin{align*} \norm{\ue}_{p+1}^{(p+1)(1-\lambda - \frac{p}{p+1})} & \ge \left(\frac{1}{2}S_{p,q}^\frac{N}{2}-C_1\eps+o(\eps)\right)^{\frac{1}{p+1}-\lambda}\\
&\ge \left( \frac 12 S_{p,q}^{\frac N2} \right)^{\frac 1{p+1} -\lambda } -C_1 \left(\frac 1{p+1} -\lambda\right)\left( \frac 12 S_{p,q}^{\frac N2} \right)^{\frac 1{p+1} -\lambda -1}\eps + o(\eps) \\
& =  \left( \frac 12 S_{p,q}^{\frac N2} \right)^{\frac 1{p+1} -\lambda } + c_1 \eps + o(\eps)   \end{align*}
with $c_1 >0$ due to our choice of $\lambda$. Similarly, we obtain
\[\norm{\ve}_{q+1}^{(q+1)(\lambda - \frac{q}{q+1})}  \ge  \left( \frac 12 S_{p,q}^{\frac N2} \right)^{\lambda - \frac{q}{q+1}} + c_2 \eps + o(\eps)    \]
with $c_2>0$.

Finally, recalling \eqref{stima basso}, 
\begin{align*} \frac{\int_\Omega \tildeue K \tildeve}{\| \tildeue \|_\alpha \| \tildeve \|_\beta} & \ge \left( \frac12 S_{p,q}^{\frac N2} \right)^{-1+\frac{1}{p+1}+\frac{1}{q+1}} + c \eps + o(\eps)=  \frac{2^{\frac2N}}{S_{p,q}} + c \eps + o(\eps) > \frac{2^{\frac2N}}{S_{p,q}}, \end{align*}
for a positive constant $c>0$, if 
$\varepsilon$ is small enough.
\end{proof}

\section{A perturbation argument: proof of Theorem \ref{main thm1.2}. }\label{sec: close to 1}
In this section, we prove Theorem \ref{main thm1.2}. The idea is to study the case $(p,q)=(1,\frac{N+4}{N-4})$,  when system \eqref{system} reduces to the single equation \eqref{biharmonic}, proving Theorem \ref{main thm 2}, and then to consider a perturbation argument. 

\begin{lemma}\label{pq close}
Assume that, for some $p^*, q^*$ satisfying \eqref{CH}, the following two conditions hold:
\begin{itemize}
\item[(i)] $\displaystyle D_{p^*, q^*} > \frac{2^{2/N}}{S_{p^*, q^*}}$;

\item[(ii)] $S_{p, q} \to S_{p^*, q^*}$ as $(p,q)\to (p^*,q^*)$ on the critical hyperbola.
\end{itemize}
Then there exists $\eps=\eps(\Omega,N)>0$ such that
\[ 
D_{p, q} > \frac{2^{\frac 2N}}{S_{p, q}} 
\]
for $(p, q)$ satisfying \eqref{CH} with $|p-p^*|+|q- q^*|<\eps$. 
\end{lemma}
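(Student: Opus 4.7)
The plan is a continuity argument built on the variational characterization \eqref{def D eq 2} of $D_{p,q}$, combined with the observation that nontrivial bounded zero-mean functions are admissible test pairs simultaneously for every $(p,q)$ on the critical hyperbola.

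First I would extract a quantitative gap from hypothesis (i): fix $\eta>0$ with $D_{p^*,q^*}\ge 2^{2/N}/S_{p^*,q^*}+3\eta$. Then, by hypothesis (ii), on a sufficiently small neighborhood of $(p^*,q^*)$ along \eqref{CH} one has $2^{2/N}/S_{p,q}\le 2^{2/N}/S_{p^*,q^*}+\eta$, so it suffices to prove that $D_{p,q}\ge D_{p^*,q^*}-2\eta$ on a (possibly smaller) neighborhood.

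The core step is to produce a \emph{universal} test pair. Using \eqref{def D eq 2} together with the density of $L^\infty(\Omega)\cap X^s$ in $X^s$ for every $s>1$, I would choose once and for all bounded nontrivial functions $\bar f,\bar g\in L^\infty(\Omega)$, each with zero mean on $\Omega$, such that
\[
\frac{\int_\Omega \bar fK\bar g}{\|\bar f\|_{\alpha_*}\|\bar g\|_{\beta_*}}\ \ge\ D_{p^*,q^*}-\eta,\qquad \alpha_*:=\tfrac{p^*+1}{p^*},\ \beta_*:=\tfrac{q^*+1}{q^*}.
\]
This same pair $(\bar f,\bar g)$ is admissible in \eqref{def D eq 2} for \emph{every} $(p,q)$ close to $(p^*,q^*)$: boundedness on a bounded domain guarantees $(\bar f,\bar g)\in X^\alpha\times X^\beta$ for all $\alpha,\beta>1$, and the numerator $\int_\Omega\bar fK\bar g$ depends only on $\Omega$, not on $p,q$.

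Next I would invoke the continuity of $s\mapsto \|\bar f\|_s$ and $s\mapsto \|\bar g\|_s$ on $(1,\infty)$, which follows from dominated convergence for bounded functions on a bounded domain, to conclude that $\|\bar f\|_\alpha\to \|\bar f\|_{\alpha_*}$ and $\|\bar g\|_\beta\to\|\bar g\|_{\beta_*}$ as $(p,q)\to(p^*,q^*)$ along \eqref{CH}. Chaining everything together, for $(p,q)$ close enough one obtains
\[
D_{p,q}\ \ge\ \frac{\int_\Omega\bar fK\bar g}{\|\bar f\|_\alpha\|\bar g\|_\beta}\ \ge\ D_{p^*,q^*}-2\eta\ \ge\ \frac{2^{2/N}}{S_{p^*,q^*}}+\eta\ >\ \frac{2^{2/N}}{S_{p,q}},
\]
which is the desired inequality. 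There is no substantive obstacle: the only care is to pick bounded, zero-mean test functions so that the same pair works for all nearby exponents, and to use continuity of the $L^s$-norm in $s$; both steps are routine, and hypothesis (ii) supplies the continuity on the $S$-side for free.
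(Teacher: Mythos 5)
Your proof is correct, and it takes a genuinely different route from the paper's in one meaningful respect. The paper first invokes hypothesis~(i) together with Lemma~\ref{condition D} to conclude that $D_{p^*,q^*}$ is \emph{achieved} at some $(f_*,g_*)$, truncates $f_*,g_*$ at level $L$, re-centers them to have zero mean, and then sends $L\to\infty$ at the end. You instead bypass the existence of a maximizer altogether: since $L^\infty(\Omega)\cap X^s$ is dense in $X^s$ (truncate and re-center, exactly as the paper does, but applied to a maximizing sequence rather than a maximizer) and the Rayleigh-type quotient in~\eqref{def D eq 2} is continuous in $(f,g)$, you can extract a single bounded zero-mean pair $(\bar f,\bar g)$ that is $\eta$-close to the supremum. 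The remaining ingredients are the same in both proofs: a bounded zero-mean pair is an admissible test pair for every $(p,q)$ on the hyperbola, the numerator is $(p,q)$-independent, and $s\mapsto\|\bar f\|_s$, $s\mapsto\|\bar g\|_s$ are continuous by dominated convergence on the bounded domain.

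The upshot of your variant is that hypothesis~(i) is \emph{only} used for the final chain of inequalities, not to produce the test pair; what you actually establish, independently of~(i), is the lower semicontinuity $\liminf_{(p,q)\to(p^*,q^*)}D_{p,q}\ge D_{p^*,q^*}$ along the hyperbola. In contrast, the paper's route uses~(i) (through Lemma~\ref{condition D} and hence the Cherrier inequality) already to obtain a maximizer, which is a heavier input than necessary for this step. Both arguments are valid; yours is a modest but genuine simplification of the logical structure.
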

\begin{proof}
By (i) and Lemma \ref{condition D}, there exists $(f_*,g_*)\in X^{\frac{p^*+1}{p^*}}\times X^{\frac{q^*+1}{q^*}}$ which achieves $D_{p_*,q_*}$. For any $L>0$ we define 
\[ \tilde f_L(x)=
\begin{cases}
    f_*(x) &\text{ if } |f_*(x)| < L \\
    L & \text{ if } |f_*(x)| \ge L
\end{cases}\qquad \tilde g_L(x)=
\begin{cases}
    g_*(x) &\text{ if } |g_*(x)| < L \\
    L &\text{ if } |g_*(x)| \ge L
\end{cases}\]
and
\[ f_L(x)=\tilde f_L(x)- \frac{1}{|\Omega|} \int_\Omega \tilde f_L,\qquad g_L(x)=\tilde g_L(x)- \frac{1}{|\Omega|}  \int_\Omega \tilde g_L. \]
Notice that $f_L,g_L \in L^\infty(\Omega)$ and have zero average, thus $(f_L,g_L)\in X^\alpha\times X^\beta$ for every $(p,q)$  and $(f_L, g_L)$ is a test function for $D_{p, q}$. Moreover, 
\[ \| f_L \|_{\frac{p+1}{p}} \to  \| f_L \|_{\frac{p^*+1}{p^*}},\qquad  g_L \|_{\frac{q+1}{q}} \to  \| g_L \|_{\frac{q^*+1}{q^*}}\]
as $(p,q)\to (p^*,q^*)$ by exploiting the dominated convergence theorem. Thus, 
\begin{equation}\label{eq:aux_cont} \lim_{(p, q) \to (p^*, q^*)} D_{p, q} \ge \lim_{(p, q) \to (p^*, q^*)}  \frac{\int_\Omega f_L K g_L}{\|f_L\|_\alpha \|g_L\|_\beta} = \frac{\int_\Omega f_L K g_L}{\|f_L\|_{\frac{p^*+1}{p^*}} \|g_L\|_{\frac{q^*+1}{q^*}} }. \end{equation}

We claim that, as $L \to \infty$, 
\begin{equation}\label{claim conv fL}
    \|f_L\|_{\frac{p^*+1}{p^*}} \to \|f_*\|_{\frac{p^*+1}{p^*}} \quad \text{ and } \quad \|g_L\|_{\frac{q^*+1}{q^*}} \to \|g_*\|_{\frac{q^*+1}{q^*}} 
    \end{equation} 
and
\begin{equation}\label{claim conv fLKgL}
\int_\Omega f_L K g_L \to \int_\Omega f_* K g_*. \end{equation}
Indeed, 
\begin{equation}\label{conv lp}
\tilde f_L \to f_* \quad \text{ in } L^\frac{p^*+1}{p^*},\qquad  \tilde g_L \to g_* \quad \text{ in } L^\frac{q^*+1}{q^*}
\end{equation}
by dominated convergence, and so
\[ \int_\Omega \tilde f_L \to \int_\Omega f_*=0, \quad \int_\Omega \tilde g_L \to \int_\Omega g_*=0.
\]
From this we have that \eqref{claim conv fL} follows; by the continuity properties of the operator $K$, also \eqref{claim conv fLKgL} holds true.

Thus, using $(i)$-$(ii)$ and \eqref{eq:aux_cont},  
\begin{align*}
\lim_{(p, q) \to (p^*, q^*)} D_{p, q}&=
\lim_{L \to \infty} \lim_{(p, q) \to (p^*, q^*)} D_{p, q}  \ge \lim_{L \to \infty} \frac{\int_\Omega f_L K g_L}{\|f_L\|_{\frac{p^*+1}{p^*}} \|g_L\|_{\frac{q^*+1}{q^*}} } \\
&= \frac{\int_\Omega f_* K g_*}{\|f_*\|_{\frac{p^*+1}{p^*}} \|g_*\|_{\frac{q^*+1}{q^*}} } =D_{p^*, q^*} > \frac{2^{\frac 2N}}{S_{p^*, q^*}} = \lim_{(p, q) \to (p^*, q^*)}  \frac{2^{\frac 2N}}{S_{p, q}}. 
\end{align*}
Thus 
\[ D_{p, q} > \frac{2^{\frac 2N}}{S_{p, q}}\quad \text{for $(p, q)$ close enough to $(p^*, q^*)$. } \qedhere \]
\end{proof}

The following Lemma immediately implies Theorem \ref{main thm 2} and it will be the starting point of our perturbation argument.
\begin{lemma}\label{sigma attained}
Let 
\[ \Sigma =\inf_{u \in H^2_\nu(\Omega)} \left \{ \int_\Omega |\Delta u|^2 : \int_\Omega |u|^{\frac{2N}{N-4}} =1 \right \},\quad \text{ where }\quad
 H^2_\nu(\Omega)= \left \{ u \in H^2(\Omega): \partial_\nu u= 0 \text{ on } \partial \Omega \right\}. \] 
Then, $\Sigma$ is attained, and 
\begin{equation}\label{dis sigma} \Sigma < \frac{S^2_{1, \frac{N+4}{N-4}}}{2^{\frac 4N}}.\end{equation}
\end{lemma}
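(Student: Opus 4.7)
My plan is a two-step argument: first prove the strict inequality \eqref{dis sigma}, and then deduce that $\Sigma$ is attained by a concentration-compactness-type argument based on the Cherrier inequality of Appendix~\ref{app:A}.

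For the strict inequality, pick a boundary point $x_0 \in \partial\Omega$ with positive mean curvature $H(x_0) > 0$ (which exists since $\Omega$ is bounded), and set up local coordinates at $x_0$ exactly as in \eqref{eq:boundary close to 0}--\eqref{boundary}. Consider the biharmonic Aubin--Talenti bubble
\[
U^*_{\eps,x_0}(x) = c_N \frac{\eps^{(N-4)/2}}{\bigl(\eps^2+|x-x_0|^2\bigr)^{(N-4)/2}},
\]
which extremizes the biharmonic Sobolev inequality on $\R^N$ and satisfies, up to normalization,
\[
\|\Delta U^*_{\eps,x_0}\|_{L^2(\R^N)}^{2} = \|U^*_{\eps,x_0}\|_{L^{2N/(N-4)}(\R^N)}^{2N/(N-4)} = S_*^{N/4}, \qquad S_*:=S_{1,(N+4)/(N-4)}.
\]
Since $U^*_{\eps,x_0}$ need not belong to $H^2_\nu(\Omega)$, I correct it by a function $\Phi_\eps$ solving an auxiliary Neumann problem that enforces $\partial_\nu(U^*_{\eps,x_0}+\Phi_\eps)=0$ on $\partial\Omega$. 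Boundary regularity arguments parallel to those of Appendix~\ref{app:B} and Lemma~\ref{step3} show that $\Phi_\eps$ is of lower order, so that $\psi_\eps := U^*_{\eps,x_0}+\Phi_\eps \in H^2_\nu(\Omega)$ serves as a legitimate test function. Asymptotic expansions in the style of Lemmas~\ref{step5}--\ref{step6} then yield
\[
\int_\Omega |\Delta\psi_\eps|^2 = \tfrac{1}{2}S_*^{N/4} - a_1 H(x_0)\,\eps + o(\eps), \qquad
\int_\Omega |\psi_\eps|^{\frac{2N}{N-4}} = \tfrac{1}{2}S_*^{N/4} - a_2 H(x_0)\,\eps + o(\eps),
\]
with explicit constants $a_1,a_2>0$ obtained as weighted integrals of $|\Delta U^*|^2$ and $(U^*)^{2N/(N-4)}$ on the trace hyperplane $\{y_N=0\}$. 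A direct computation using the explicit form of $U^*$ gives $a_1 > \tfrac{N-4}{N}\,a_2$, so that the Rayleigh quotient satisfies
\[
\frac{\int_\Omega |\Delta\psi_\eps|^2}{\bigl(\int_\Omega|\psi_\eps|^{2N/(N-4)}\bigr)^{(N-4)/N}} = \frac{S_*^2}{2^{4/N}} - c H(x_0)\,\eps + o(\eps) < \frac{S_*^2}{2^{4/N}}
\]
for $\eps$ small, proving \eqref{dis sigma}.

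For attainment, take a minimizing sequence $(u_k)\subset H^2_\nu(\Omega)$ with $\|u_k\|_{\frac{2N}{N-4}}=1$ and $\int|\Delta u_k|^2\to\Sigma$. Boundedness in $H^2_\nu$ yields (up to a subsequence) a weak limit $u_k \rightharpoonup u$. Writing $u_k=u+v_k$ and applying Br\'ezis--Lieb splits both $\|u_k\|_{\frac{2N}{N-4}}^{\frac{2N}{N-4}}$ and $\|\Delta u_k\|_2^2$ into the contributions of $u$ and of the residual $v_k$. Applying the Cherrier-type inequality of Appendix~\ref{app:A} with $\eta=2$,
\[
\|v\|_{\frac{2N}{N-4}} \le \Bigl(\tfrac{2^{2/N}}{S_*}+\delta\Bigr)\|\Delta v\|_2 + C(\delta)\|v\|_{W^{1,2}}, \qquad v\in H^2_\nu(\Omega),
\]
together with the compact embedding $H^2(\Omega)\hookrightarrow W^{1,2}(\Omega)$ and the strict inequality \eqref{dis sigma}, the residual is forced to satisfy $\|v_k\|_{\frac{2N}{N-4}}\to 0$, exactly as in Steps 2--3 of the proof of Lemma~\ref{condition D}. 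Consequently $\|u\|_{\frac{2N}{N-4}}=1$ and weak lower semicontinuity of $\|\Delta\cdot\|_2^2$ gives $\int|\Delta u|^2\le\Sigma$, so $u$ attains $\Sigma$.

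The main obstacle is the precise verification of the sign inequality $a_1>\tfrac{N-4}{N}a_2$. This is a specific computation with the explicit bubble $U^*$ whose Laplacian is not proportional to a power of $U^*$, making the boundary integrals genuinely different from the ones appearing in Lemma~\ref{step6}. A closely analogous computation has been carried out in \cite{BCN} for the Paneitz--Branson operator with a lower-order perturbation, and the expected inequality reflects the classical phenomenon that positive boundary mean curvature strictly lowers the Sobolev constant under Neumann-type boundary conditions. A secondary technical point is controlling $\Phi_\eps$ sharply enough that its contributions to both the numerator and denominator are genuinely $o(\eps)$; this relies on sharp boundary estimates of $\|\partial_\nu U^*_{\eps,x_0}\|_{L^s(\partial\Omega)}$ analogous to Lemmas \ref{eq:estimate_normal_derivative}--\ref{eq:estimate_normal_derivative2}.
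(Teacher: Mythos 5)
Your attainment argument follows essentially the same route as the paper's: bounded minimizing sequence, weak limit, Br\'ezis--Lieb splitting, Cherrier inequality \eqref{cherrier} with $\eta=2$, then weak lower semicontinuity. (The paper additionally verifies that $u\mapsto(\|\Delta u\|_2^2+\|u\|_2^2)^{1/2}$ is a norm on $H^2_\nu(\Omega)$, via Lemma \ref{lemma:regularity}, to upgrade your $L^2$ bounds on $\Delta u_k$ and $u_k$ to boundedness in $H^2$, and it isolates a ``Step one: $u\not\equiv 0$'' since the Br\'ezis--Lieb argument alone does not preclude the limit from vanishing; both points are soft and your sketch absorbs them, but they should be made explicit.)

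The strict inequality \eqref{dis sigma} is where your proposal departs from the paper and also where it has a genuine gap. You propose a direct test-function computation with the biharmonic bubble, corrected by $\Phi_\eps$ to lie in $H^2_\nu(\Omega)$, and asymptotic expansion in the spirit of Lemmas \ref{step5}--\ref{step6}. This route is plausible, but the entire content of \eqref{dis sigma} is the sign inequality $a_1>\frac{N-4}{N}a_2$, and you do not establish it: you state it as the output of ``a direct computation'' and then, in the same breath, call it ``the main obstacle.'' Since $\Delta U^*$ is not a power of $U^*$, there is no structural reason the two boundary integrals compare as you need without actually doing the integral; this is exactly the delicate part of \cite[Lemma 4.1]{BCN}. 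Also note a normalization slip: with $S_*:=S_{1,\frac{N+4}{N-4}}$ the first-power Sobolev constant, the extremal satisfies $\|\Delta U^*\|_{L^2(\R^N)}^2=\|U^*\|_{L^{2N/(N-4)}(\R^N)}^{2N/(N-4)}=S_*^{N/2}$, not $S_*^{N/4}$ (compare \eqref{eq:Spq_achieved} with $q=1$, $\beta=2$); this does not change the structure of the argument but the $\tfrac12 S_*^{N/4}$ in your expansions should read $\tfrac12 S_*^{N/2}$.

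The paper, by contrast, obtains \eqref{dis sigma} with no asymptotic computation at all: for any $\alpha>0$, dropping the nonnegative terms $|\nabla u|^2+\alpha|u|^2$ gives
\[
\Sigma \le \inf_{u\in H^2_\nu(\Omega)}\left\{\int_\Omega\bigl(|\Delta u|^2+|\nabla u|^2+\alpha|u|^2\bigr):\ \int_\Omega|u|^{\frac{2N}{N-4}}=1\right\}<\frac{S_{1,\frac{N+4}{N-4}}^2}{2^{4/N}},
\]
where the strict inequality is precisely \cite[Lemma 4.1]{BCN} for the Paneitz--Branson functional. Since your functional is pointwise dominated by theirs, the strict inequality is inherited for free. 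You should either carry out the $a_1$ versus $a_2$ estimate in full (which would amount to re-proving that lemma of \cite{BCN}) or, better, simply invoke the domination.
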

\begin{proof}
Notice that \eqref{dis sigma} is an easy consequence of \cite[Lemma 4.1]{BCN}, as for any $\alpha>0$ one has
\[ \Sigma \le \inf_{u \in H^2_\nu(\Omega)} \left \{ \int_\Omega (|\Delta u|^2 + |\nabla u|^2 + \alpha |u|^2): \int_\Omega |u|^{\frac{2N}{N-4}} =1 \right \} < \frac{S^2_{1, \frac{N+4}{N-4}}}{2^{\frac 4N}}. \]
We recall that the constant $S$ appearing in \cite{BCN} (see eq. (2.1) therein) corresponds to our $S^2_{1, \frac{N+4}{N-4}}$. 

We now prove that $\Sigma$ is attained. We borrow some ideas from \cite[Lemma 3.2]{BCN}.
We preliminary notice that 
\[ u\mapsto \left( \int_\Omega \abs{\Delta u}^2 + \int_\Omega |u|^2 \right)^{1/2} \]
is a norm in $H_\nu^2$. Indeed,
let $u \in H_\nu^2$, and consider the problem
\[ 
\Delta w= \Delta u \text{ in } \Omega, \qquad \partial_\nu w=0 \text{ on } \partial \Omega,\qquad 
\int_\Omega w= 0 
\]
This problem is well defined as by the divergence theorem and since $\partial_\nu u =0$ on $\partial \Omega$, 
\[ \int_\Omega \Delta u= \int_{\partial \Omega} \partial_\nu u=0. \]
Therefore
\[ w:= u- \frac{1}{|\Omega|} \int_\Omega u \]
is the unique solution of this problem and, by regularity theory (Lemma \ref{lemma:regularity}), $\norm{w}_{H^2} \le C \norm{\Delta u}_2,$
whence $\norm{u}_{H^2} \le C \left( \norm{u}_2 + \norm{\Delta u}_2 \right).$

Now, take $u_k$ a minimizing sequence for $\Sigma$. Then, 
\[ \int_\Omega \abs{\Delta u_k}^2 \le C \quad \text{ and also } \quad  \int_\Omega \abs{u_k}^2 \le |\Omega|^\frac{4}{N} \left(\int_\Omega \abs{u_k}^{\frac{2N}{N-4}}\right)^\frac{N-4}{N} =|\Omega|^\frac{4}{N}, \]
hence in particular $u_k$ is bounded in $H_\nu^2$. Therefore, up to extracting a subsequence, there exists $u \in H_\nu^2$ such that 
\[
 \text{$u_k \to u$ a.e. in $\Omega$, weakly in $L^2$, strongly in $H^1$;}\quad \text{$\Delta u_k \rightharpoonup \Delta u$ weakly in $L^2$.  }
\]

\textit{Step one: $u \not \equiv 0$.} Assume by contradiction $u \equiv 0$, then $u_k \to 0$ strongly in $H^1(\Omega)$. Thus, by Cherrier's inequality \eqref{cherrier} with $\eta=2$, we have that for every $\eps>0$,
\[  1 =\|u\|_{\frac{2N}{N-4}}  \le \left( \frac{2^{\frac 2N}}{S_{1, \frac{N+4}{N-4}}} + \eps \right) \norm{\Delta u_k}_2 + o(1)=\left( \frac{2^{\frac 2N}}{S_{1, \frac{N+4}{N-4}}} + \eps \right) \Sigma + o(1).\]
 This contradicts \eqref{dis sigma} for sufficiently small $\eps$ and large $k$, and allows us to conclude that $u \not \equiv 0$.

\textit{Step two: strong convergence in $L^{\frac{2N}{N-4}}$.} 
One has
\begin{equation}\label{brezislieb} 1- \int_\Omega |u_k -u|^{\frac{2N}{N-4}} = \int_\Omega |u|^{\frac{2N}{N-4}} + o(1),\end{equation}
by the Brezis-Lieb lemma, see for instance \cite[Lemma 1.32]{Willem}. 
Since $\Delta u_k \rightharpoonup \Delta u$ weakly in $L^2$, 
\begin{equation}\label{decomposition} \int_\Omega \abs{\Delta u_k}^2= \int_\Omega \abs{\Delta(u_k-u)}^2 + \int_\Omega \abs{\Delta u}^2 + 2 \int_\Omega \Delta(u_k - u) \Delta u= \int_\Omega \abs{\Delta(u_k-u)}^2 + \int_\Omega \abs{\Delta u}^2 + o(1). \end{equation}
Also, since $u\not \equiv 0$ by the previous step and by the definition of $\Sigma$,
\begin{equation}\label{stima sigma} \int_\Omega \abs{\Delta u}^2 \ge \Sigma \norm{u}_{\frac{2N}{N-4}}^2. \end{equation}
Therefore, by exploiting \eqref{decomposition}, \eqref{cherrier}, the fact that $\frac{N-4}{N}$<1, \eqref{stima sigma}, and \eqref{brezislieb}, 
\begin{align*}
\Sigma &= \int_\Omega \abs{\Delta(u_k-u)}^2 + \int_\Omega \abs{\Delta u}^2 + o(1) \ge \left( \frac{2^{\frac 4N}}{S_{1, \frac{N+4}{N-4}}^2} + \eps \right)^{-1} \norm{u_k-u}_{\frac{2N}{N-4}}^2 +  \int_\Omega \abs{\Delta u}^2 + o(1) \\
& \ge  \left( \frac{2^{\frac 4N}}{S_{1, \frac{N+4}{N-4}}^2} + \eps \right)^{-1} \norm{u_k-u}_{\frac{2N}{N-4}}^2 +  \Sigma \norm{u}_{\frac{2N}{N-4}}^2 + o(1) \\
& = \left[  \left( \frac{2^{\frac 4N}}{S_{1, \frac{N+4}{N-4}}^2} + \eps \right)^{-1} - \Sigma \right] \norm{u_k-u}_{\frac{2N}{N-4}}^2 + \Sigma  \left( \norm{u_k-u}_{\frac{2N}{N-4}}^2 + \norm{u}_{\frac{2N}{N-4}}^2 \right) + o(1) \\
& \ge \left[  \left( \frac{2^{\frac 4N}}{S_{1, \frac{N+4}{N-4}}^2} + \eps \right)^{-1} - \Sigma \right] \norm{u_k-u}_{\frac{2N}{N-4}}^2 + \Sigma  \left( \norm{u_k-u}_{\frac{2N}{N-4}}^{\frac{2N}{N-4}} + \norm{u}_{\frac{2N}{N-4}}^{\frac{2N}{N-4}} \right)^{\frac{N-4}{N}} + o(1) \\
& =  \left[  \left( \frac{2^{\frac 4N}}{S_{1, \frac{N+4}{N-4}}^2} + \eps \right)^{-1} - \Sigma \right] \norm{u_k-u}_{\frac{2N}{N-4}}^2 + \Sigma + o(1).
\end{align*}
By taking $\eps>0$ sufficiently small so that $ \left(\frac{2^{\frac 4N}}{S_{1, \frac{N+4}{N-4}}^2} + \eps\right)^{-1}-\Sigma>0$, we deduce by \eqref{dis sigma} that $u_k \to u$ strongly in $L^{\frac{2N}{N-4}}$, thus $\norm{u}_{\frac{2N}{N-4}}=1$. 

\textit{Step three: conclusion of the proof.}
By weak lower semicontinuity of $\norm{\cdot}_2$, one has
\[ \Sigma \le \int_\Omega \abs{\Delta u}^2 \le \liminf_{k\to \infty} \int_\Omega \abs{\Delta u_k}^2 = \Sigma, \]
hence $\Sigma$ is attained at $u$. 
 \end{proof}

We are now ready to prove Theorem \ref{main thm1.2}.  
\begin{proof}[Proof of Theorem \ref{main thm1.2}]
We show that  $(i)$-$(ii)$ in Lemma \ref{pq close} hold with $p^*=1$ and $q^*=\frac{N+4}{N-4}$. 
We start by proving condition $(i)$. 
Since $\Sigma$ is attained by Lemma \ref{sigma attained}, there exists $\bar u \in H^2_\nu(\Omega)$ such that 
\[ \int_\Omega |\Delta \bar u|^2= \Sigma, \quad \int_\Omega |\bar u|^{\frac{2N}{N-4}} =1, \]
and $\bar u$ satisfies 
\[
\Delta^2 \bar u= |\bar u|^{\frac{8}{N-4}} \bar u \text{ in } \Omega,\qquad  
\partial_\nu \bar u=\partial_\nu (\Delta \bar u)=0 \text{ on } \partial \Omega.  \]
Take 
\[ f=- \Delta \bar u, \quad g=|\bar u|^{\frac{8}{N-4}}\bar u. \]
One has $(f, g) \in X$, and 
\[ \int_\Omega g K f= \int_\Omega \bar u^{\frac{2N}{N-4}} =1,\qquad \norm{f}_2= \Sigma^{\frac 12},\qquad \text{ and } \norm{g}_{\frac{2N}{N+4}}= 
\left( \int_\Omega |\bar u|^{\frac{2N}{N-4}}  \right)^{\frac{N+4}{2N}}=1. \]
Thus, by the definition of $D_{1,\frac{N+4}{N-4}}$ and \eqref{dis sigma},
\[ D_{1, \frac{N+4}{N-4}} \ge \frac{\int_\Omega g K f}{\norm{f}_2\norm{g}_{\frac{2N}{N+4}} } = \Sigma ^{-\frac 12}> \frac{2^{\frac 2N}}{S_{1, \frac{N+4}{N-4}}}. \]

Let us now prove condition $(ii)$ in Lemma \ref{pq close}. Recall the bubbles defined in \eqref{eq:familybubbles}. Here, $\eps$ and $x_0$ are not important in the analysis, hence we will just fix $\eps=1$ and $x_0=0$. However, the dependence on $p, q$ (which solve \eqref{CH}) will be crucial. Thus, in what follows, we assume without loss of generality that $q\geq \frac{N+4}{N-4}$ and we will denote by $(U_p, V_p)$ the solution to
\[ 
    -\Delta U_p=V_p^q \text{ in } \R^N,\qquad
    -\Delta V_p=U_p^p  \text{ in } \R^N,\qquad
    U_p(0)=1. \]
By Proposition 2.2 in \cite{Pistoia}, we know that
\[ (U_p, V_p) \to (U_1, V_1)  \text{ in } \mathcal{D}^{2,2}(\R^N) \times \mathcal{D}^{2,\frac{2N}{N+4}}(\R^N)\qquad \text{ as $p \to 1$.} \]

Since $U_p \to U_1$ in $\mathcal{D}^{2,2}(\R^N)$, then up to a subsequence we have $U_p \to U_1$ in  $L^{\frac{2N}{N-4}}(\R^N)$. In particular, 
by the inverse of the dominated convergence theorem, we get a function $h \in L^{\frac{2N}{N-4}}(\R^N)$ such that $\abs{U_p} \le h$.
Therefore, using the fact that the sequence $U_p$ is uniformly bounded (by 1), and since $q+1\geq \frac{2N}{N-4}$, there exists a constant $C>0$ such that 
$ \abs{U_p}^{q+1} \le C \abs{U_p}^{\frac{2N}{N-4}} \le C h^{\frac{2N}{N-4}}$. Whence, as $p \to 1$, by dominated convergence
\[ S_{p, q}^\frac{N}{2}=\int_{\R^N} U_p^{q+1} \to \int_{\R^N} U_1^{\frac{2N}{N-4}}=S_{1, \frac{N+4}{N-4}}^\frac{N}{2}. \]

Now, Lemma \ref{pq close} applies with $p^*=1$ and $q^*=\frac{N+4}{N-4}$, thus
\[ D_{p, q} > \frac{2^{\frac 2N}}{S_{p, q}} \]
for $(p, q)$ close enough to $(1, \frac{N+4}{N-4})$. 
The conclusion now easily follows recalling  Lemma \ref{condition D}. Proposition \ref{reg2} yields regularity of solutions. 
\end{proof}

\section{Symmetry-breaking and solutions with radial symmetry. Proof of Theorem \ref{thm:annulus}}\label{sec:symm}

In this section we restrict our attention to the case
\begin{equation}\label{omega simm} \begin{aligned}
    \Omega&=B_R(0)\setminus B_r(0), \qquad \text{ for some } 0<r<R \quad \text{ or }\\
    \Omega&=B_R(0) \qquad \text{ for some } R>0.
\end{aligned} \end{equation}
The proof of Theorem \ref{thm:annulus} follows more or less directly from the ideas in \cite[Theorems 1.1--1.2]{ST2}, where the subcritical case is treated, combined with the regularity result in Proposition \ref{reg2}. Here we just highlight the differences, presenting a sketch of the proof. We start by defining some radially symmetric counterparts of notions introduced before.

\begin{definition}\label{def:radial}
We define
\[ X_{rad}=\{ (f, g) \in X: f, g \text{ are radially symmetric } \} \]
and
\[ D_{rad}=\sup \left \{\int_\Omega f Kg: \quad  (f, g) \in X_{rad}, \quad \gamma_1 \norm{f}_\alpha^\alpha + \gamma_2 \norm{g}_\beta^\beta =1 \right \}.
\]
\end{definition}

The key point of the proof of Theorem \ref{thm:annulus} (and of \cite[Theorem 1.2]{ST2}) is based on an $L^t$--norm-preservation transformation introduced in \cite{ST2}, which we now recall. Let 
\begin{align*}
&\mathcal{I}:L_{rad}^\infty(\Omega)\to C_{rad}(\overline{\Omega}),\qquad {\cal I}h(x):=\int_{r \leq|y|{\leq}|x|\}} h(y)\ dy
=N\omega_N\int_{r}^{|x|}h(\rho)\rho^{N-1}\ d\rho\\
&\mathfrak{F}: C_{rad}(\overline{\Omega})\to L_{rad}^\infty(\Omega),\qquad \mathfrak{F} h:=(\chi_{\{{\cal I}h>0\}}-\chi_{\{{\cal I}h\leq 0\}})\, h.
\end{align*}
\begin{definition}
For $h\in C_{rad}(\overline{\Omega})$, the $\divideontimes$-transformation is given by 
\begin{align*}
h^{\divideontimes}\in L^\infty_{{rad}}(\Omega),\qquad h^{\divideontimes}(x) := ({\mathfrak F} h)^\#(\omega_N |x|^N-\omega_N {r}^N), 
\end{align*}
where $\omega_N= |B_1|$ is the volume of the unitary ball in $\R^N$ and $\#$ is the decreasing rearrangement given by 
\begin{align*}
h^\#:[0,|\Omega|]\to\R,\qquad h^{\#}(0):=\text{ess sup}_{\Omega} h\quad h^\#(s):=\inf\{t\in\R\::\: |\{h>t\}|<s\},\ s>0.
\end{align*}
\end{definition}
This transformation, in practice, is applied to radial functions $h$ with zero average: $\mathcal{I}h(R)=\int_\Omega h=0$. Loosely speaking, this transformation does the following: in many situations the domain of $h$ may be split in $r=:r_0<r_1<\ldots<r_N=R$, where $\int_{r_i<|x|<r_{i+1}}h=0$; however it may not be true that $\mathcal{I}h(x)$ is nonnegative for every $x$. We flip the graph of $h$ in the annuli $[r_i,r_{i+1}]$ where we don't have this property, obtaining at the end $\mathcal{I}(\mathfrak{F} h)(x)\geq 0$ for every $x\in \Omega$. We then apply a decreasing rearrangement, finally placing the result back in the original annulus using the transformation $x\mapsto \omega_N|x|^N-\omega_N r^N$. For more details, insights, examples and comments regarding the definition of the  \emph{flip-\&-rearrange transformation} $\divideontimes$ we refer to \cite[Section 3.2]{ST2}. The following is a combination of Theorem 1.3 and Proposition 3.4 from \cite{ST2}.

\begin{theorem}\label{thm:trans} Let $p,q>0$ and take $\Omega$ as in \eqref{omega simm}.
Take  $f,g:\overline{\Omega}\to \R$ be continuous and radially symmetric functions with $\int_\Omega f = \int_\Omega g= 0$. Then $(f^\divideontimes,g^\divideontimes)\in X_{rad}$,
\begin{align}\label{in}
\|f^\divideontimes\|_\frac{p+1}{p}=\|f\|_\frac{p+1}{p}\quad \|g^\divideontimes\|_\frac{q+1}{q}=\|g\|_\frac{q+1}{q} \text{ and }\quad  \int_\Omega fK g\leq \int_\Omega f^\divideontimes K g^\divideontimes.
\end{align}
Furthermore, if $f,g$ are nontrivial and the last statement in \eqref{in} holds with equality, then $f,g$ are monotone in the radial variable. Moreover, $(Kf,Kg)$ is radially symmetric and $(Kf)_r(Kg)_r>0$.
\end{theorem}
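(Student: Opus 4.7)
The plan is to reduce all three assertions to one-dimensional statements via the cumulative-mass operator $\mathcal{I}$, which bridges radial functions on $\Omega$ and functions on $[0,|\Omega|]$. Write $V(\rho):=\omega_N\rho^N-\omega_N r^N$ for the annular-volume parametrization (so $h\mapsto h\circ V^{-1}$ is measure-preserving), and recall that $(\mathcal{I}h)'(s)=N\omega_N s^{N-1}h(s)$ for continuous radial $h$. The cornerstone is the pointwise estimate $\mathcal{I}h^{\divideontimes}\ge|\mathcal{I}h|\ge 0$ on $\Omega$, whenever $\int_\Omega h=0$. I will prove it in two substeps. First, $\mathcal{I}(\mathfrak F h)=|\mathcal{I}h|$ pointwise: $\mathcal{I}h$ vanishes at $s=r,R$ and on each maximal sign-interval $(r_i,r_{i+1})$ of $\mathcal{I}h$ one has $\int_{r_i\le|y|\le r_{i+1}}h=0$, so $\mathfrak F$ (which multiplies $h$ by the sign of $\mathcal{I}h$) makes the partial integrals nonnegative and of magnitude $|\mathcal{I}h|$, while contributions from earlier intervals vanish. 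Second, the Hardy--Littlewood inequality applied at each level $V(|x|)\in[0,|\Omega|]$ gives
\[
\mathcal{I}h^{\divideontimes}(x)=\int_0^{V(|x|)}(\mathfrak F h)^\#(t)\,dt\;\ge\;\int_0^{V(|x|)}(\mathfrak F h)(V^{-1}(t))\,dt=\mathcal{I}(\mathfrak F h)(x)=|\mathcal{I}h(x)|.
\]

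From the cornerstone, the structural conclusions are immediate: $\mathfrak F$ preserves $|h|$ pointwise, decreasing rearrangement preserves distribution functions, and $V$ is measure-preserving, hence $\|h^{\divideontimes}\|_s=\|h\|_s$ for any $s>1$, while $\int_\Omega h^{\divideontimes}=\int_\Omega \mathfrak F h=0$ (the second identity follows by summing the vanishing partial integrals over sign-intervals), so $(f^{\divideontimes},g^{\divideontimes})\in X_{rad}$. For the integral inequality, I will solve the radial Neumann problem explicitly: from $-(s^{N-1}(Kh)'(s))'=s^{N-1}h(s)$ with $(Kh)'(r)=(Kh)'(R)=0$, we get $(Kh)'(s)=-\mathcal{I}h(s)/(N\omega_N s^{N-1})$, and an integration by parts in the radial variable (using $\mathcal{I}f(r)=\mathcal{I}f(R)=0$) yields
\[
\int_\Omega f\,Kg\;=\;\frac{1}{N\omega_N}\int_r^R \frac{\mathcal{I}f(s)\,\mathcal{I}g(s)}{s^{N-1}}\,ds.
\]
Combined with the pointwise chain $\mathcal{I}f^{\divideontimes}\mathcal{I}g^{\divideontimes}\ge|\mathcal{I}f|\,|\mathcal{I}g|\ge \mathcal{I}f\,\mathcal{I}g$, the sought inequality follows after integration against $ds/s^{N-1}$.

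The main technical obstacle is the equality case. Equality forces $\mathcal{I}f\cdot\mathcal{I}g=|\mathcal{I}f|\cdot|\mathcal{I}g|$ (same sign a.e.) together with $\mathcal{I}h^{\divideontimes}=|\mathcal{I}h|$ pointwise for $h\in\{f,g\}$; the latter, through the equality case of Hardy--Littlewood at every level $V(|x|)$, forces $\mathfrak F h=(\mathfrak F h)^\#$ a.e., that is, $\mathfrak F h$ is nonincreasing in $|x|$. The delicate step is ruling out interior sign changes of $\mathcal{I}h$: if $r_i\in(r,R)$ separates two maximal intervals of opposite sign of $\mathcal{I}h$, then $r_i$ is an interior critical point of $\mathcal{I}h$, so $h(r_i)=0$ by $(\mathcal{I}h)'(r_i)=N\omega_N r_i^{N-1}h(r_i)$; then $\mathfrak F h$ nonincreasing with $\mathfrak F h(r_i)=0$ forces $\mathfrak F h\ge 0$ on the left and $\mathfrak F h\le 0$ on the right of $r_i$; coupled with $\int_{(r_{i-1},r_i)}h=\int_{(r_i,r_{i+1})}h=0$ (by $\mathcal{I}h(r_{i-1})=\mathcal{I}h(r_i)=\mathcal{I}h(r_{i+1})=0$), this forces $h\equiv 0$ on $(r_{i-1},r_{i+1})$, contradicting the assumption that $\mathcal{I}h$ was of one nonzero sign on each side. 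Hence $\mathcal{I}h$ keeps one sign on $(r,R)$, and $\mathfrak F h=\pm h$ uniformly, so $h$ is monotone in the radial variable; by nontriviality and the zero-mean constraint, $\mathcal{I}h>0$ (or $<0$) strictly on $(r,R)$. The same-sign condition $\mathcal{I}f\cdot\mathcal{I}g\ge 0$ then gives $(Kf)_r(Kg)_r=\mathcal{I}f\,\mathcal{I}g/(N\omega_N s^{N-1})^2>0$ in the interior, and radial symmetry of $Kf,Kg$ is immediate from uniqueness of the Neumann problem applied to radial data.
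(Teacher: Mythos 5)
The paper does not actually prove this theorem---it cites Theorem 1.3 and Proposition 3.4 of \cite{ST2}---so your proposal is a reconstruction of the argument used there, and its main structure is correct: the one-dimensional identity $\int_\Omega fKg=\tfrac{1}{N\omega_N}\int_r^R\mathcal{I}f\,\mathcal{I}g\,s^{1-N}\,ds$, the cornerstone $\mathcal{I}h^{\divideontimes}\ge\mathcal{I}(\mathfrak{F}h)=|\mathcal{I}h|$, the norm and mean preservation, and the resulting inequality are all sound.

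There are, however, two gaps in the equality-case argument. The more serious one: your claim that a sign-change point $r_i$ of $\mathcal{I}h$ is ``an interior critical point'' of $\mathcal{I}h$, hence $h(r_i)=0$, is wrong---$\mathcal{I}h$ generically crosses zero transversally at $r_i$. In fact the hypotheses force the opposite, $h(r_i)<0$: on $(r_{i-1},r_i)$, Hardy--Littlewood equality makes $\mathfrak{F}h=h$ nonincreasing; since $\int_{r_{i-1}}^{r_i}h\,\rho^{N-1}\,d\rho=0$ and $\mathcal{I}h>0$ on the interior, $h(r_i)\ge0$ would by monotonicity imply $h\ge0$, hence $h\equiv0$ by the zero integral, hence $\mathcal{I}h\equiv0$ there---a contradiction. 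The same reasoning on the right side gives $h(r_i)<0$ too. Your subsequent chain ``$\mathfrak{F}h(r_i)=0\Rightarrow\mathfrak{F}h\ge0$ on the left and $\le0$ on the right $\Rightarrow h\equiv0$'' therefore never starts. The correct (and shorter) contradiction is: $\mathfrak{F}h(r_i^-)=h(r_i)<0<-h(r_i)=\mathfrak{F}h(r_i^+)$, which violates monotonicity of $\mathfrak{F}h$. The second, smaller gap: to pass from the product equality $\mathcal{I}f^{\divideontimes}\mathcal{I}g^{\divideontimes}=|\mathcal{I}f||\mathcal{I}g|$ to the factor-wise equality $\mathcal{I}h^{\divideontimes}=|\mathcal{I}h|$ for each $h\in\{f,g\}$, you must control what happens on the zero set of $|\mathcal{I}f||\mathcal{I}g|$; this works only after observing that for nontrivial $g$ with zero mean one has $(\mathfrak{F}g)^{\#}(0)=\operatorname{ess\,sup}\mathfrak{F}g>0$, which forces $\mathcal{I}g^{\divideontimes}>0$ on all of $(r,R)$ (and likewise $\mathcal{I}f^{\divideontimes}>0$), so one may legitimately divide by $\mathcal{I}g^{\divideontimes}$ and conclude. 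State that step explicitly before invoking the Hardy--Littlewood equality case.
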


We also recall some standard notation and the definition of foliated Schwarz symmetry. 
Let $N \ge 4$, and fix $e \in \mathbb{S}^{N-1}$. We consider the half-space $H(e)=\{ x \in \R^N : x\cdot e >0 \}$, and the half domain $\Omega(e)=\{ x \in \Omega : x\cdot e >0 \}$. Given a function $w: \overline \Omega \to \R$ we define $w_e: \overline \Omega \to \R$ as
\[ w_e(x)=w(x-2(x\cdot e)e). \]
The polarization $w^H$ of $w$ with respect to a hyperplane $H=H(e)$ is given by
\[ w^H= \begin{cases}
\max \{ w, w_e \} & \text{ in } \overline{\Omega(e)} \\
\min \{ w, w_e \} & \text{ in } \overline \Omega \setminus \overline{\Omega(e)}.
\end{cases} \]
We say that $u \in C(\overline \Omega)$ is foliated Schwarz symmetric with respect to some unit vector $e^* \in \mathbb{S}^{N-1}$ if $u$ is axially symmetric with respect to the axis $\R e^*$ and non increasing in the polar angle $\theta:= \arccos(\frac{x}{|x|} \cdot e^*) \in [0, \pi]$. 
The following characterization of foliated Schwarz symmetry holds, see \cite{SW}.
\begin{lemma}\label{lem sym}
There is $e^* \in \mathbb{S}^{N-1}$ such that $ u, v \in C(\overline \Omega)$ are foliated Schwarz symmetric with respect to  $e^*$ if and only if for each $e \in \mathbb{S}^{N-1}$ either
\[ u \ge u_e, v \ge v_e \quad \text{ in } \Omega(e) \qquad \text{ or } \qquad u \le u_e, v \le v_e \quad \text{ in } \Omega(e). \]
\end{lemma}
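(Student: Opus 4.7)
The statement is an ``iff'', so I would prove the two implications separately.

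\textbf{Necessity.} Suppose $u, v$ are foliated Schwarz symmetric about some $e^* \in \mathbb{S}^{N-1}$. Fix $e \in \mathbb{S}^{N-1}$; without loss of generality $e\cdot e^* \geq 0$, the other case being symmetric. For $x \in \Omega(e)$, the reflection $\tilde{x} := x - 2(x\cdot e)e$ has $|\tilde{x}| = |x|$ and
\[
\tilde{x}\cdot e^* = x\cdot e^* - 2(x\cdot e)(e\cdot e^*)\leq x\cdot e^*,
\]
so the polar angles satisfy $\theta(x) \leq \theta(\tilde{x})$. Non-increasing monotonicity of $u, v$ in $\theta$ (at fixed radius) then yields $u(x) \geq u(\tilde{x}) = u_e(x)$ and $v(x) \geq v_e(x)$, which is the first polarization alternative.

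\textbf{Sufficiency.} Set $M := \{e \in \mathbb{S}^{N-1} : u \geq u_e \text{ and } v \geq v_e \text{ in } \Omega(e)\}$; by continuity of $u, v$ this set is closed, and the hypothesis of the lemma translates exactly to $M \cup (-M) = \mathbb{S}^{N-1}$. The proof reduces to finding $e^* \in \mathbb{S}^{N-1}$ such that the closed hemisphere $\mathbb{H}^*_{e^*}:= \{e : e\cdot e^* \geq 0\}$ is contained in $M$. Granting such an $e^*$, foliated Schwarz symmetry follows: every $e \in \mathbb{S}^{N-1}$ perpendicular to $e^*$ satisfies $\pm e \in M$, forcing $u = u_e$ and $v = v_e$ everywhere, hence invariance of $u, v$ under the stabilizer $O(N-1)$ of $\R e^*$ inside $O(N)$, which is axial symmetry. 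Monotonicity in $\theta$ follows by reducing any two points $x_1, x_2$ with $|x_1|=|x_2|$ and $\theta(x_1)<\theta(x_2)$ to the coplanar case via axial symmetry, and then picking $e \in \mathbb{S}^{N-1}$ with $e\cdot e^* \geq 0$ such that $R_e x_2 = x_1$ with $x_2 \in \Omega(e)$; the inclusion $e \in M$ directly delivers $u(x_1) \geq u(x_2)$ and $v(x_1) \geq v(x_2)$.

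\textbf{The main obstacle: extracting $e^*$.} The essential difficulty is that an arbitrary closed subset $M \subset \mathbb{S}^{N-1}$ with $M \cup (-M) = \mathbb{S}^{N-1}$ need not contain a closed hemisphere, so the polarization structure underlying $M$ must be exploited. My plan is to show $M$ is \emph{geodesically convex} on $\mathbb{S}^{N-1}$: given $e_1, e_2 \in M$ not antipodal and $e$ on the short geodesic arc between them, the joint polarization inequalities at $e_1, e_2$ for both $u$ and $v$ propagate to $e$ via iterated polarization with respect to $e_1, e_2$ together with the associated rotations (compositions of $R_{e_1}$ and $R_{e_2}$) in $\mathrm{span}\{e_1, e_2\}$. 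Critically, this propagation works only because the hypothesis pairs $u$ and $v$ with the \emph{same} orientation at each direction, which prevents the conflicting signs that would otherwise obstruct the argument. Once geodesic convexity of $M$ is in hand, the spherical Hahn-Banach theorem places $M$ inside some closed hemisphere $\mathbb{H}^*_{e^*}$ (unless $M = \mathbb{S}^{N-1}$, a trivial case); combining $M \subseteq \mathbb{H}^*_{e^*}$ with $M \cup (-M) = \mathbb{S}^{N-1}$ and closedness of $M$ forces $-\mathbb{H}^*_{e^*} \subseteq -M$ and hence, by complementation, $\mathbb{H}^*_{e^*} \subseteq M$. Thus $M = \mathbb{H}^*_{e^*}$, and $e^*$ is the pole asserted by the lemma.
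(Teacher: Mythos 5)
The paper does not actually prove this lemma: it is quoted with the citation \cite{SW}, so there is no in-paper argument to compare against. I therefore assess your proposal on its own terms.

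Your \emph{necessity} direction is correct. In the \emph{sufficiency} direction, the reformulation ($M$ closed, $M\cup(-M)=\mathbb{S}^{N-1}$) and the reduction to finding $e^*$ with $\{e : e\cdot e^*\ge 0\}\subseteq M$ are sound, and the derivation of axial symmetry and angular monotonicity from that inclusion is essentially right. One sign slip: with $\theta(x_1)<\theta(x_2)$, $|x_1|=|x_2|$, and $e=(x_1-x_2)/|x_1-x_2|$, it is $x_1$ (not $x_2$) that lies in $\Omega(e)$, since $x_1\cdot e=(|x_1|^2-x_1\cdot x_2)/|x_1-x_2|>0$; then $e\in M$ gives $u(x_1)\ge u(R_e x_1)=u(x_2)$. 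As written, ``$x_2\in\Omega(e)$'' would yield the reversed inequality $u(x_2)\ge u(x_1)$.

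The decisive gap is the claim that $M$ is geodesically convex. You assert it with only a vague plan (``iterated polarization with respect to $e_1,e_2$ together with the associated rotations \ldots propagate''), which cannot be checked and, more importantly, cannot be correct as described. Membership of $e_1,e_2$ in $M$ does not, by itself, force intermediate directions into $M$: on a two-dimensional annulus with $u(\theta)=\cos 2\theta$, one computes $M=\{\pm(1,0),\pm(0,1)\}$, a four-point set that is nowhere near convex. Of course this $u$ violates $M\cup(-M)=\mathbb{S}^{N-1}$, which is precisely the point: convexity (equivalently, that $M$ is a closed hemisphere) must be extracted from the \emph{global} dichotomy, and your sketch never invokes it. You instead attribute the propagation to ``the hypothesis pair[ing] $u$ and $v$ with the same orientation,'' but the single-function case $v=u$ is a special case of the lemma and has no such pairing to exploit, so that cannot be the mechanism. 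Since locating the pole $e^*$ is where the entire substance of the lemma lies, the sufficiency direction remains unproved; the ``convexity plus spherical Hahn--Banach'' route would, if it could be made to work, be a genuinely different argument from the polarization-based proofs in the literature (e.g.\ \cite{SW}), but as it stands the key step is missing.
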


We are now ready to prove Theorem \ref{thm:annulus}. 
\begin{proof}[Proof of Theorem \ref{thm:annulus}]
Proof of Part (i). By our assumption, $\Omega$ is an annulus. We first observe that $D_{rad} >0$, as follows by taking $(\varphi_1, \varphi_1)$, where $\varphi_1$ is the first non constant radial eigenfunction of the Laplacian with Neumann boundary conditions. Since $\Omega$ is an annulus, for any $t>1$ the embedding
 \begin{equation}\label{eq:embedding_radial}
 W^{2,t}_{rad}(\Omega)\subset W^{1,t}_{rad}(\Omega)\hookrightarrow C(\overline \Omega)
 \end{equation}
 is compact, therefore, also by Lemma \ref{lemma:regularity},  the operator $K$ is compact from $L^{\frac{q+1}{q}}_{rad}(\Omega)$ to $C(\overline \Omega)$. In particular, $K$ is compact from  $L^{\frac{q+1}{q}}_{rad}(\Omega)$ to  $L^{p+1}_{rad}(\Omega)$. With this, we can prove that $D_{rad}$ is achieved at a certain pair $(f,g)$: indeed, taking a maximizing sequence $(f_k,g_k)\in X_{rad}$ such that $\int_\Omega f_kKg_k\to D_{rad}$ and $\gamma_1 \norm{f_k}_\alpha^\alpha + \gamma_2 \norm{g_k}_\beta^\beta =1 $, up to a subsequence we have
 \[
 f_k\rightharpoonup f \text{ weakly in } L^{\frac{p+1}{p}}(\Omega),\qquad g_k\rightharpoonup g \text{ weakly in } L^{\frac{q+1}{q}}(\Omega),\qquad Kg_k \to Kg \text{ strongly in } L^{p+1}(\Omega).
 \]
 Then $\gamma_1 \norm{f}_\alpha^\alpha + \gamma_2 \norm{g}_\beta^\beta\leq 1$ and $\int_\Omega fKg=D_{rad}$, so that $(f,g)\neq (0,0)$ and (\emph{cf.} Lemma \ref{lemma:equivalent_def_D})
 \[
 \frac{D_{rad}}{(\gamma_1 \norm{f_k}_\alpha^\alpha + \gamma_2 \norm{g_k}_\beta^\beta)^\frac{1}{\gamma}} =\int_\Omega \frac{f}{(\gamma_1\|f\|_\alpha^\alpha+\gamma_2\|g\|_\beta^\beta)^\frac{1}{\alpha}} K \left(\frac{g}{(\gamma_1\|f\|_\alpha^\alpha+\gamma_2\|g\|_\beta^\beta)^\frac{1}{\beta}}\right)\leq D_{rad},
 \]
 so that $\gamma_1 \norm{f}_\alpha^\alpha + \gamma_2 \norm{g}_\beta^\beta=1$, and $(f,g)$ achieves $D_{rad}$.
 
 Therefore, reasoning precisely as in the proof of Proposition \ref{prop:equiv}, 
 \begin{equation}\label{eq:relation_Drad_with_Lrad}
 (u, v)=((D_{rad})^{-q \frac{p+1}{pq-1}} K_p g, (D_{rad})^{-p \frac{q+1}{pq-1}} K_q f)=(D_{rad}^{-\frac{q+1}{pq-1}} |f|^{\frac 1 p- 1} f, D_{rad}^{-\frac{p+1}{pq-1}} |g|^{\frac 1 q- 1} g)
 \end{equation}
 is a least-energy radial solution of \eqref{system}.  
 
 From the second relation in \eqref{eq:relation_Drad_with_Lrad}, we have that $f, g$ are also continuous up to the boundary. From Theorem \ref{thm:trans} (using the maximality of $(f,g)$) and by the first relation in \eqref{eq:relation_Drad_with_Lrad}, we now conclude that $u_rv_r>0$.
 
 
Proof of Part (ii): Under the assumptions of the statement, we have that $D_{p,q}$ is achieved. By simple scalings (recall the proof of Proposition \ref{prop:equiv}), we have that
 \[
 D_{p,q}^{-\frac{N}{2}}\frac{2}{N}=\tilde D_{p,q}:=\inf_\mathcal{N} \phi(f,g),
 \]
  where  $\mathcal{N}=\{(f,g)\in X\setminus\{(0,0)\}: \ \phi'(f,g)(\gamma_1f,\gamma_2g)=0\}$ is the standard Nehari manifold, and $\tilde f=(D_{p,q})^s f$, and $\tilde g=(D_{p,q})^t g$, with $s=-p\frac{q+1}{pq-1}$ and $t=-q\frac{p+1}{pq-1}$ achieves $\tilde D_{p,q}$. 
Then we are precisely in the assumptions of \cite[Theorem 4.1]{ST2} (whose proof does not depend on the value of $(p,q)$ with respect to the critical hyperbola, if we take into account Theorem \ref{thm:trans} and the regularity result in Proposition \ref{reg2}). 
Then $\tilde f, \tilde g$ are not radially symmetric, and neither are $f,g$.

We finally show that $u, v$ are foliated Schwarz symmetric. 
Fix a hyperplane $H=H(e)$ for some $|e|=1$. Notice that  (see Proposition \ref{prop:equiv})
\[ (u, v)=(D_{p, q}^{-q\frac{p+1}{pq-1}} \, K_p g, \; D_{p,q}^{-p\frac{q+1}{pq-1}} \,K_q f) \]
is a solution of \eqref{system}, and in particular $-\Delta u= D_{p,q}^{-q\frac{p+1}{pq-1}} g$ and $-\Delta v=D_{p,q}^{-p\frac{q+1}{pq-1}} f$. 
Define 
\[ (\tilde u, \tilde v)=(D_{p,q}^{-q\frac{p+1}{pq-1}} \, K_p(g^H), \; D_{p,q}^{-p\frac{p+1}{pq-1}} \,K_q(f^H)). \]
By Proposition \ref{reg2}, we know that $(u, v) \in C^{2, \zeta}(\overline \Omega) \times C^{2, \eta}(\overline \Omega)$ with suitable $\zeta, \eta$, and moreover $f^H, g^H \in L^\infty(\Omega)$ and $(\tilde u, \tilde v) \in (W^{2, p}(\Omega) \cap C^{1, \zeta}(\overline \Omega)) \times (W^{2, q}(\Omega) \cap C^{1, \eta}(\overline \Omega))$. 
Let $V=v_e + v- \tilde v -\tilde v_e$, thus 
\[ -\Delta V= D_{p,q}^{-p \frac{p+1}{pq-1}}  \, (f- f^H - (f^H)_e + f_e)=0 \text{ in } \Omega, \quad \partial_\nu V=0 \text{ on } \partial \Omega. \]
Testing this equation with $V$ and integrating by parts we obtain that $V=k$ for some $k \in \R$. Then
\begin{equation}\label{V=k}
v_e + v= \tilde v_e + \tilde v +k \quad \text{ in } \Omega. 
\end{equation}
Let 
\[ \Gamma_1= \{ x \in \partial \Omega(e): x \cdot e =0 \}, \qquad \Gamma_2= \{ x \in \partial \Omega(e): x \cdot e > 0 \}, \]
$w_1= \tilde v - v +k/2$, and $w_2= \tilde v- v_e + k/2$. Since $v=v_e$ and $\tilde v=\tilde v_e$ on $\Gamma_1$, we have that $w_1=w_2=0$ on $\Gamma_1$, and $\partial_\nu w_1=\partial_\nu w_2=0$ on $\Gamma_2$. 
Also,
\[ -\Delta w_1= D_{p,q}^{-p \frac{p+1}{pq-1}} (f^H - f )\ge 0 \quad \text{ in } \Omega(e) \qquad -\Delta w_2= D_{p,q}^{-p \frac{p+1}{pq-1}}(f^H-f_e) \ge 0 \quad \text{ in } \Omega(e), \]
which implies that $w_1 \ge 0$ and $w_2 \ge 0$ in $\Omega(e)$. 
Therefore, since by \eqref{V=k} one has $\tilde v_e= v_e + v - \tilde v - k$, and by definition of $g^H$ one has $g_e^H=g_e+g-g^H$, and also recalling $\int_\Omega g=0$, 
\begin{align}\label{stima sim}
\nonumber D^{-p\frac{q+1}{pq-1}} \int_\Omega (g K f - g^H K f^H) &=\int_\Omega (g v - g^H \tilde v) = \int_{\Omega(e)} gv+ g_e v_e - g^H \tilde v - (g^H)_e \tilde v_e\\
\nonumber  &=\int_{\Omega(e)} gv+ g_e v_e -g^H \tilde v - (g_e+g-g^H)(v_e + v - \tilde v - k) \\
&=\int_{\Omega(e)}  (g_e-g^H)w_1 + (g-g^H) w_2 + \frac k2 (g_e +g) \le 0.
\end{align}
To show that $u, v$ are foliated Schwarz symmetric, we now use Lemma \ref{lem sym}, and argue by contradiction. Then, we assume that there is $e \in \mathbb{S}^{N-1}$ and the corresponding half space $H=H(e)$ such that $u \ne u^H$ in $\Omega(e)$ and either $v_e \ne v^H$ in $\Omega(e)$ or $u_e \ne u^H$ in $\Omega(e)$. Since $t \mapsto  c\, |t|^s t$ is a strictly monotone increasing function in $\R$ for $s > -1$, and since $f= D_{p,q}^{-p\frac{p+1}{pq-1}} |u|^{p-1} u$, 
then 
\[ f \ne f^H \qquad \text{ and either } f_e \ne f^H \quad \text{ or } g_e \ne g^H \]
in $\Omega(e)$. 
Therefore, $w_1>0$ and either $w_2 >0$ or $0 \ne g_e - g^H \le 0$ in $\Omega(e)$.   
Also, notice that if $f \ne f^H$ in $\Omega(e)$, then $g \ne g_e$ in $\Omega(e)$, namely either $g - g^H\ne0$ or $g_e - g^H \ne 0$. Summing up, we have two possibilities: either $w_1>0$ and $0 \ne g_e - g^H \le 0$; or $w_2 >0$ and $0 \ne g - g^H \le 0$. In both cases,
recalling now \eqref{stima sim}, we conclude
\[ \int_\Omega g Kf < \int_\Omega g^H K f^H, \]
which contradicts the maximality of $(f, g)$. 
\end{proof}

\begin{remark}\label{rmk:pohoz}
Notice that, when $p=q$, the system reduces to a  single equation. Then, the fact that least energy solutions are non radial in a ball $\Omega=B_R(0)$ is a simple consequence of the Pohozaev identity: if a radial solution existed, since $u$ is sign-changing  we would have $u=0$ on $\partial B_\rho(0)$ for some $0<\rho<R$, and we would have a positive solution of 
\[
-\Delta u=u^\frac{N+2}{N-2} \in B_{\rho}(0),\quad u=0 \text{ on } \partial B_\rho(0),
\]
a contradiction. Such argument does not extend to the case $p\neq q$; even though a Pohozaev-type  identity holds (see \cite{ST2}), if $(u,v)$ is a least energy solution it is not clear if both $u$ and $v$ vanish on the same sphere, so we do not obtain, in principle, a solution to a Dirichlet problem in a smaller ball.
\end{remark}

\appendix
\section{Cherrier's-type inequalities}\label{app:A}
For $\eta> 1$ and  $N>2\eta$, we define $S$ to be the best Sobolev constant of the embedding $\mathcal{D}^{2,\eta}(\R^N)\hookrightarrow L^{\frac{N\eta}{N-2\eta}}(\R^N)$, that is 
\[
S=\inf \left\{  \norm{\Delta u}_{L^\eta(\R^N)} :\ u\in \mathcal{D}^{2,\eta}(\R^N),\ \int_{\R^N} |u|^\frac{N\eta}{N-2\eta}=1 \right\},
\]
$S \|u\|_{L^{\frac{\eta N}{N-2\eta}}(\R^N)}\leq  \|\Delta u\|_{L^\eta(\R^N)}$. With our notation above, we have 
\[ S= S_{\frac{\eta N - N+2\eta}{N-2\eta},\frac{1}{\eta-1}}.\]

The main purpose of this appendix is to prove the following Cherrier inequality. We would like to point out that this one is not included in Cherrier's original paper \cite{Cherrier}. We are inspired by the proof of \cite[Lemma 1.2]{BCN}, which deals with the case $\eta=2$ and $N\geq 5$.

\begin{theorem}\label{thm:Cherrier} Let $\eta> 1$ and $N > 2\eta$. For any $\varepsilon>0$ there exists a constant $C(\varepsilon)>0$ such that, for any $u \in W^{2, \eta}_\nu(\Omega):=\{u \in W^{2, \eta}(\Omega): u_\nu=0 \text{ on } \partial \Omega \}$, one has 
\begin{equation}\label{cherrier} \norm{u}_{\eta^*} \le  \left (\frac{2^{\frac2N}}{S} +\varepsilon\right ) \norm{\Delta u}_{\eta} + C(\varepsilon) \norm{u}_{W^{1,\eta}}, \end{equation}
where $\eta^*=\frac{N\eta}{N-2\eta}$.
\end{theorem}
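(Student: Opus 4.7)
The approach is by contradiction via concentration-compactness, after first isolating the two relevant local Sobolev constants. The interior constant is $1/S$ by the very definition of $S$ on $\R^N$. The improved boundary constant $2^{2/N}/S$ arises from an \emph{even reflection}: given $\phi\in W^{2,\eta}(\R^N_+)$ compactly supported with $\partial_{x_N}\phi=0$ on $\{x_N=0\}$, the even extension $\tilde\phi(x',x_N):=\phi(x',|x_N|)$ lies in $W^{2,\eta}(\R^N)$ --- crucially thanks to the Neumann condition, without which $\Delta\tilde\phi$ would carry a singular part on $\{x_N=0\}$. Combining the identities
\[
\|\tilde\phi\|_{L^{\eta^*}(\R^N)}^{\eta^*}=2\|\phi\|_{L^{\eta^*}(\R^N_+)}^{\eta^*},\qquad \|\Delta\tilde\phi\|_{L^\eta(\R^N)}^\eta=2\|\Delta\phi\|_{L^\eta(\R^N_+)}^\eta
\]
with the Sobolev inequality on $\R^N$ and the identity $\tfrac{1}{\eta}-\tfrac{1}{\eta^*}=\tfrac{2}{N}$ yields the half-space estimate
\[
\|\phi\|_{L^{\eta^*}(\R^N_+)}\leq \tfrac{2^{2/N}}{S}\,\|\Delta\phi\|_{L^\eta(\R^N_+)}.
\]

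Set $A:=2^{2/N}/S$. If \eqref{cherrier} failed for some $\varepsilon>0$, one would find $u_k\in W^{2,\eta}_\nu(\Omega)$ with $\|u_k\|_{\eta^*}=1$, $\|\Delta u_k\|_\eta<1/(A+\varepsilon)$ and $\|u_k\|_{W^{1,\eta}}<1/k$. By the compactness of $W^{2,\eta}(\Omega)\hookrightarrow W^{1,\eta}(\Omega)$ and the boundedness of $\{u_k\}$ in $W^{2,\eta}$, the sequence converges to $0$ in $W^{1,\eta}$ (hence a.e.), while keeping $\|u_k\|_{\eta^*}=1$, so it must concentrate. Lions' second concentration-compactness principle, adapted to $W^{2,\eta}$, then produces at most countably many points $\{x_j\}\subset\overline\Omega$ and nonnegative atoms $\mu_j,\nu_j$ with
\[
|u_k|^{\eta^*}\overset{*}{\rightharpoonup}\sum_j\mu_j\,\delta_{x_j},\qquad |\Delta u_k|^\eta\overset{*}{\rightharpoonup}\omega\geq \sum_j\nu_j\,\delta_{x_j},
\]
$\sum_j\mu_j=1$, and the local reverse Sobolev inequality $\mu_j^{1/\eta^*}\leq \Lambda_j\,\nu_j^{1/\eta}$, where $\Lambda_j=1/S$ if $x_j\in\Omega$ and $\Lambda_j\leq A$ if $x_j\in\partial\Omega$. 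The boundary case is precisely where the even-reflection computation above is invoked, after flattening $\partial\Omega$ at $x_j$ by a $C^2$ chart with differential equal to the identity and performing a blow-up at the natural scale; the rescaled profiles satisfy $\partial_{x_N}v=0$ on the flattened boundary thanks to $u_\nu=0$ on $\partial\Omega$.

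Using $\Lambda_j\leq A$ uniformly, $\nu_j\geq \mu_j^{\eta/\eta^*}/A^\eta$, and hence
\[
\tfrac{1}{A^\eta}\sum_j\mu_j^{\eta/\eta^*}\leq \sum_j\nu_j\leq \omega(\overline\Omega)=\lim_k\|\Delta u_k\|_\eta^\eta<\tfrac{1}{(A+\varepsilon)^\eta},
\]
so $\sum_j\mu_j^{\eta/\eta^*}<A^\eta/(A+\varepsilon)^\eta<1$. On the other hand, since $\mu_j\in[0,1]$, $\sum_j\mu_j=1$ and $\eta/\eta^*<1$, the pointwise inequality $\mu_j^{\eta/\eta^*}\geq\mu_j$ yields $\sum_j\mu_j^{\eta/\eta^*}\geq 1$, the desired contradiction.

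The heart of the argument is the concentration-compactness alternative at boundary points with the sharp constant $A=2^{2/N}/S$: locally flattening $\partial\Omega$ and blowing up the concentration at $x_j\in\partial\Omega$ produces profiles supported approximately in a half-space, and the Neumann condition $u_\nu=0$ is what keeps these limits in $W^{2,\eta}$ of the half-space with vanishing normal derivative, so that the even-reflection argument gives the boundary Sobolev estimate with factor $2^{2/N}$. Once this boundary version is in place, the closing contradiction is essentially a one-line computation; without the Neumann condition the constant would instead be just $1/S$ and the whole improvement would collapse.
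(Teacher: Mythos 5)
Your proof is correct and reaches the result by a genuinely different global mechanism than the paper's, while relying on the same essential local ingredient. The key computation — even reflection across the flattened boundary turning the Neumann condition into $W^{2,\eta}(\R^N)$ membership, yielding the half-space Sobolev inequality with constant $A = 2^{2/N}/S$ — is precisely the heart of the paper's Case Two; both proofs live or die by it. Where you diverge is in how this local estimate is globalized: the paper performs a direct, constructive argument with an explicit partition of unity $\{\zeta_i^{1/\eta}\}$, splitting indices into interior and boundary, applying the local Sobolev inequality to each $\zeta_i^{1/\eta}u$, and absorbing the commutator terms $|\nabla(\zeta_i^{1/\eta})|\,|\nabla u|$ and $|u|\,|\Delta(\zeta_i^{1/\eta})|$ via the $\varepsilon$-Young inequality $(1+t)^\eta \le (1+\varepsilon_0)+C(\varepsilon_0)t^\eta$ into the $\|u\|_{W^{1,\eta}}$ term. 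You instead argue by contradiction via Lions' second concentration-compactness lemma, deriving the atom inequality $\mu_j^{\eta/\eta^*}\leq A^\eta\nu_j$ and closing with the subadditivity bound $\sum\mu_j^{\eta/\eta^*}\geq\sum\mu_j=1$. Your approach is conceptually cleaner once the machinery is available, and the final contradiction is a two-line computation; what it buys is structure, not savings, because justifying the CC alternative with the refined boundary constant $A$ — namely the reverse local Sobolev at $x_j\in\partial\Omega$ — requires exactly the same localization, flattening chart preserving the Neumann condition, and reflection that the paper carries out explicitly, and in addition requires adapting the $W^{1,p}$ CC lemma to the second-order Neumann setting with boundary atoms, which is not an off-the-shelf citation. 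The paper's route is more elementary and self-contained; yours trades explicit partition-of-unity bookkeeping for the task of proving a boundary-adapted CC lemma. One small point to make rigorous: the measure identity $\omega(\overline\Omega)=\lim_k\|\Delta u_k\|_\eta^\eta$ holds only after passing to a subsequence where the limit exists, and you should keep the non-strict bound $\omega(\overline\Omega)\leq (A+\varepsilon)^{-\eta}$; this still gives $\sum_j\mu_j^{\eta/\eta^*}\leq A^\eta/(A+\varepsilon)^\eta<1$ and the contradiction survives.
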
 
\begin{proof}
Following the proof of (1.8) of Lemma 2.1 in \cite{BCN} (see Step two therein), we choose finitely many points $x_i \in \overline \Omega$ and a positive number $R >0$, with corresponding sets $\Omega_i= \Omega \cap B_{R}(x_i)$, such that $ \overline \Omega \subset \bigcup_{i=1}^{n} \Omega_i$. Up to increasing the number of open sets, we can assume that $x_i \in \partial \Omega$ whenever $\overline \Omega_i \cap \partial \Omega \ne \emptyset$. 
We choose $(\tilde \zeta_i)_{i=1}^n$ a smooth partition of the unity subordinated to the covering $(\Omega_i)_{i=1}^n$:
\[
\tilde \zeta_i\in C^\infty(\R^N),\quad  0\leq \tilde \zeta_i \leq 1,\quad  \textrm{supp}\, \tilde \zeta_i \subset \Omega_i, \quad \sum_{i=1}^n \tilde \zeta_i(x)=1 \ \forall x\in \Omega.
\]
 We split the set of indices into two disjoint sets $\mathcal{I}$ and $\mathcal{J}$, 
where $\mathcal{I}$ contains the indices with $x_i \in \Omega$ (and $\Omega_i\subset \Omega$), while $\mathcal{J}$ contains the indices with $x_i \in \partial \Omega$ ($\cup_{i\in \mathcal{J}} \Omega_i$ covers the boundary). 
We define 
\[ \zeta_i = \frac{ \tilde \zeta_i^{2\eta+1}}{\sum_{i=1}^n \tilde \zeta_i^{2\eta+1}}. \]
Notice that this is a partition of unity subordinated to the same covering, and that $\zeta_i^{1/\eta} \in C^2(\overline \Omega)$. 
Then, for any $u \in W^{2, \eta}(\R^N)$, one has $\zeta_i^{1/\eta} u \in W^{2, \eta}(\R^N)$ and $\text{supp}(\zeta_i^{1/\eta} u)\subset \Omega_i$.
We consider separately the indices in $\mathcal{I}$ and in $\mathcal{J}$. 

\textit{Case One.}
For any $i \in \mathcal{I}$  and for any $\varepsilon>0$ there exists a constant $B_1(\varepsilon)$ such that 
\[  \sum_{i \in \mathcal{I}}  \left( \int_{\Omega_i }\abs{\zeta_i^{1/\eta} u}^{\eta^*} \right)^{\frac{\eta}{\eta*}} \le  \left( \frac{2^{\frac{2}N}}{S} +\varepsilon \right)^\eta
\left(\sum_{i \in \mathcal{I}} \int_{\Omega} \zeta_i \abs{\Delta u}^\eta \right) + B_1(\varepsilon) \norm{u}_{W^{1, \eta}}^\eta. \]

Indeed,
\begin{align*}
 \sum_{i \in \mathcal{I}}  \left( \int_{\Omega_i }\abs{\zeta_i^{1/\eta} u}^{\eta^*} \right)^{\frac{\eta}{\eta*}} 
 &\le  \sum_{i \in \mathcal{I}}  \left( \int_{\R^N}\abs{\zeta_i^{1/\eta} u}^{\eta^*} \right)^{\frac{\eta}{\eta*}} \le \frac{1}{S^\eta} \sum_{i \in \mathcal{I}}\int_{\R^N} \abs{\Delta(\zeta_i^{1/\eta} u)}^\eta\\
 &= \frac{1}{S^\eta} \sum_{i \in \mathcal{I}} \left[ \int_{\R^N} \left(\zeta_i^{1/\eta} |\Delta u| + 2 |\nabla (\zeta_i^{1/\eta})||\nabla u| + |u||\Delta (\zeta_i^{1/\eta})|\right)^\eta  \right] \\
 &\le \frac{2^{\frac{2\eta}N}}{S^\eta} \sum_{i \in \mathcal{I}} \left[ \int_{\R^N} \left(\zeta_i^{1/\eta} |\Delta u| + 2 |\nabla (\zeta_i^{1/\eta})||\nabla u| + |u||\Delta (\zeta_i^{1/\eta})|\right)^\eta  \right],
\end{align*} 
where in the last inequality we just used the fact that $2^{\frac{2\eta}N} \ge 1$. Let us call
\[ X= \zeta_i^{1/\eta} |\Delta u|, \quad Y= 2 |\nabla (\zeta_i^{1/\eta})||\nabla u|,  \quad Z=|u||\Delta (\zeta_i^{1/\eta})|. \]
Recall that, given $\varepsilon_0>0$, there exists $C(\varepsilon_0)$ such that $(1+t)^\eta\leq (1+\varepsilon_0)+C(\varepsilon_0) t^\eta$ for every $t>0$.
This, in turn, implies that $(s+t)^\eta \leq s^\eta(1+\varepsilon_0)+C(\varepsilon_0)t^\eta$.
Hence:
\begin{equation}\label{eq:inequality_XYZ}
(X+Y+Z)^\eta\leq (1+\varepsilon_0)X^\eta+C(\varepsilon_0)(Y+Z)^\eta\leq (1+\varepsilon_0)X^\eta+C(\varepsilon_0,\eta)(Y^\eta+Z^\eta).
\end{equation}
Notice that
\[ \int_{\R^N}  X^\eta= \int_{\Omega} \zeta_i \abs{\Delta u}^\eta, \quad \text{ whereas } \quad
 \int_{\R^N} (Y^\eta+Z^\eta) \le \int_{\Omega} (\abs{u}^\eta + |\nabla u|^\eta). \]
Thus
\[  \sum_{i \in \mathcal{I}}  \left( \int_{\Omega_i }\abs{\zeta_i^{1/\eta} u}^{\eta^*} \right)^{\frac{\eta}{\eta*}}  \le   \frac{2^{\frac{2\eta}N}}{S^\eta} (1+\varepsilon_0) \left( \sum_{i \in \mathcal{I}} \int_{\Omega} \zeta_i \abs{\Delta u}^\eta \right)  + C_2(\varepsilon_0) \norm{u}_{W^{1, \eta}}^\eta, \]
from which the conclusion follows, as 
\[  \frac{2^{\frac{2\eta}N}}{S^\eta} (1+\varepsilon_0)  \le \left( \frac{2^{\frac2N}}{S} +\varepsilon \right)^\eta \]
if $\varepsilon_0$ is small enough.

\textit{Case Two.}
We claim that, for any $i \in \mathcal{J}$  and for any $\varepsilon>0$, there exists a constant $B_2(\varepsilon)$ such that 
\[ \sum_{i \in \mathcal{J}}  \left( \int_{\Omega_i \cap \Omega} \abs{\zeta_i^{1/\eta} u}^{\eta^*} \right)^{\frac{\eta}{\eta*}} \le \left( \frac{2^{\frac2N}}{S} +\varepsilon \right)^\eta\left( \sum_{i \in \mathcal{J}} \int_{\Omega} \zeta_i \abs{\Delta u}^\eta\right) + B_2(\varepsilon) \norm{u}_{W^{1, \eta}}^\eta. \]

We first notice that if $u \in W^{2, \eta}_\nu(\R^N_+)$ then 
\begin{equation}\label{ext} \norm{u}_{L^{\eta^*}(\R^N_+)}  \le \frac{2^{\frac2N}}{S}\norm{\Delta u}_{L^\eta(\R^N_+)} \end{equation}
Indeed, let us extend symmetrically $u \in W^{2, \eta}_\nu(\R^N_+)$ to the whole space $\R^N$, reflecting it with respect to the $x_N$ axis. Let us call this extension $\hat u \in W^{2, \eta}(\R^N)$:
\[
\hat u(x):=\begin{cases}
u(x',x_N), & x_N>0,\\
u(x',-x_N), & x_N<0.
\end{cases}
\] 
One has
\[ \norm{\hat u}_{L^{\eta^*}(\R^N)} = 2^{1/{\eta^*}} \norm{u}_{L^{\eta^*}(\R^N_+)}\quad \text{ and }\quad 
 \norm{\Delta \hat u}_{L^\eta(\R^N)} = 2^{1/\eta} \norm{\Delta u}_{L^\eta(\R^N_+)}, \]
and the desired inequality follows using the definition of $S$, and recalling that $W^{2, \eta}(\R^N) \subset \mathcal{D}^{2, \eta}(\R^N)$. 

We now follow closely \cite[Section 4]{BCN}. 
More precisely, write $x=(x',x_N) \in \R^N$. We know that there exist $R>0$ and a smooth function 
$\rho:\{x' \in \R^{N-1}: |x'|<R \} \to \R_+$ such that (up to a rotation)
\[ 
\Omega \cap B_R = \{(x', x_N) \in B_R: x_N > \rho(x')\},\qquad
\partial \Omega \cap B_R=\{(x', x_N) \in B_R: x_N=\rho(x') \}. 
\]
For any open subset $V$ of $\R^N$ we define 
$\Phi: V \subset \R^N \to \R^N$ such that 
\[ \Phi(y', y_N)=(y', \rho(|y'|))-y_N \nu(y', \rho(|y'|)),
\]
 with  $\nu(x', \rho(x'))=(\nabla \rho(x'), -1)$ an outward orthogonal vector to the tangent space. 
Then the following maps are well defined for some open sets $V_i$
\[ \Phi_i^{-1}=(\Phi^{-1})|_{\Omega_i \cap \Omega} : \Omega_i \cap \Omega \to V_i \subset \R^N_+. \]
Notice that, for any $u \in W^{2, \eta}_\nu(\Omega)$ and  $i \in \mathcal{J}$,  one has $(\zeta_i^{1/\eta} u)\circ \Phi_i \in W^{2, \eta}_\nu(\R^N_+)$. 
Also, we may assume for $\varepsilon_0$ small enough
\[ | \text{det } D \Phi_i(y)| \le 1 + \varepsilon_0, \]
for the details we refer to \cite{BCN}. 
Finally, we call $\theta_i=(\zeta_i^{1/\eta} u) \circ \Phi \in W^{2, \eta}_\nu(\R^N_+)$.

 Then, using \eqref{ext},
\begin{align*} \sum_{i \in \mathcal{J}}  \left( \int_{\Omega_i \cap \Omega} \abs{\zeta_i^{1/\eta} u}^{\eta^*} \right)^{\frac{\eta}{\eta*}} &= \sum_{i \in \mathcal{J}}  \left( \int_{V_i} \abs{\theta_i(y)}^{\eta^*} | \text{det}(D\Phi_i (y))| \right)^{\frac{\eta}{\eta*}}\le (1+\varepsilon_0)^{\frac{\eta}{\eta*}} \sum_{i \in \mathcal{J}} \left( \int_{V_i} |\theta_i(y)|^{\eta^*} \right)^{\frac{\eta}{\eta*}}  \\
&\le \left(\frac{2^{\frac2N}}{S}\right)^{\eta}  (1+\varepsilon_0)^{\frac{\eta}{\eta*}} \sum_{i \in \mathcal{J}} \int_{V_i} |\Delta \theta_i(y)|^\eta.
\end{align*}
Using estimates (47)-(48) in \cite{BCN}, and by applying \eqref{eq:inequality_XYZ} as we did above, we get the conclusion, up to choosing a suitably small $\varepsilon_0$.

\textit{Conclusion. }
Let $u \in W^{2, \eta}_\nu(\Omega)$. We have
\begin{align*}
\norm{u}_{\eta^*}^\eta = \norm{u^\eta}_{\frac{\eta^*}{\eta}} 
& = \norm{\sum_i \zeta_i u^\eta}_{\frac{\eta^*}{\eta}} \le \sum_i \norm{\zeta_i u^\eta}_{\frac{\eta^*}{\eta}} = \sum_i \norm{\zeta_i^{1/\eta} u}_{\eta^*}^\eta \\
&= \sum_{i \in \mathcal{I}} \left( \int_{\Omega_i }\abs{\zeta_i^{1/\eta} u}^{\eta^*} \right)^{\frac{\eta}{\eta*}} +\sum_{i \in \mathcal{J}} \left( \int_{\Omega_i \cap \Omega} \abs{\zeta_i^{1/\eta} u}^{\eta^*} \right)^{\frac{\eta}{\eta*}}.
\end{align*}

Then using the previous steps, 
\[ \norm{u}_{\eta^*}^\eta \le \left( \frac{2^{\frac2N}}{S} +\varepsilon \right)^\eta\left( \sum_{i} \int_{\Omega} \zeta_i \abs{\Delta u}^\eta\right) + B(\varepsilon) \norm{u}_{W^{1, \eta}}^\eta, \]
from which the conclusion follows using the fact that $\zeta_i$ are a partition of unity, taking the power $1/\eta$ and recalling that $(s+t)^{1/\eta} \le  s^{1/\eta} + t^{1/\eta}$ as $\eta \ge 1$.
\end{proof}

\section{Asymptotic estimates}\label{app:B}

\begin{lemma}\label{eq:estimate_normal_derivative}
Assume $p,q$ satisfy \eqref{eq:assumptions_pq} or \eqref{eq:assumptions_pq2}.
One has
\begin{equation}\label{stima norma}
\norm{\partial_\nu \ue}_{L^{\frac{2(N-1)}{N}}(\partial \Omega)}
  \le h_q(\eps):=
\begin{cases}
c \eps^{-\frac{N}{p+1} + \frac{N}{2}} & \text{ if } q >\frac{N+4}{2(N-2)}\\
c \eps^{-\frac{N}{p+1} + \frac{N}{2}} |\log \eps|^{\frac{N}{2(N-1)}} & \text{ if } q =\frac{N+4}{2(N-2)}\\
c \eps^{-\frac{N}{p+1} +q(N-2)-2} & \text{ if } q < \frac{N+4}{2(N-2)}.
\end{cases} 
\end{equation}
Let \eqref{eq:assumptions_pq3} hold. Then
\begin{equation}\label{stima norma N=4} \norm{\partial_\nu \ue}_{L^{\frac{3}{2}}(\partial \Omega)}
  \le  c \eps^{2\frac{p-1}{p+1}} (\log \eps)^{\frac 23 }. \end{equation}
\end{lemma}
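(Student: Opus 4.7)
My starting point is the explicit expression
\[
\partial_\nu U_\eps(x)=\eps^{-\frac{N}{p+1}-1}\,U'(|x|/\eps)\,\frac{x\cdot\nu(x)}{|x|},
\]
obtained from $U_\eps(x)=\eps^{-N/(p+1)}U(x/\eps)$ and the radiality of $U$. I plan to split $\partial\Omega=(\partial\Omega\cap B_\eta(0))\cup(\partial\Omega\setminus B_\eta(0))$ with $\eta$ as in \eqref{eq:boundary close to 0}. The far piece is elementary: the pointwise decay of $|U'|$ implied by \eqref{eq:decayestimate} and \eqref{eq:decayestimates2} gives $\|\partial_\nu U_\eps\|_{L^s(\partial\Omega\setminus B_\eta(0))}\lesssim \eps^{-N/(p+1)+\alpha}$, where $\alpha$ is the decay exponent of $U$ at infinity; this will turn out to be absorbed by the near-piece bound in all three cases, so I focus on the near piece.

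Parametrizing $\partial\Omega\cap B_\eta(0)$ as $x=(x',\rho(x'))$ and using \eqref{boundary}--\eqref{eq:unitarynormal}, one checks that $|x\cdot\nu(x)|/|x|\lesssim|x'|$ (the linear part of $\rho$ vanishes at the origin) and $|x|\approx|x'|$. After substituting these into the expression above, using $d\sigma\approx dx'$, and changing variables $y=x'/\eps$ in $\R^{N-1}$, I reduce to
\[
\int_{\partial\Omega\cap B_\eta(0)}|\partial_\nu U_\eps|^s\,d\sigma\;\lesssim\; \eps^{-\frac{sN}{p+1}+N-1}\int_{0}^{\eta/\eps}\bigl(t\,|U'(t)|\bigr)^s\,t^{N-2}\,dt,\qquad s=\tfrac{2(N-1)}{N}.
\]
By $tU'(t)\approx U(t)$ from \eqref{eq:decayestimates2}, the integrand is comparable to $U(t)^s\,t^{N-2}$ for large $t$. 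I want to emphasize that the factor $t^{N-2}$ (and not $t^{N-1}$) reflects the codimension-one nature of the domain of integration; this is precisely what will make the relevant threshold appear at $(N+4)/(2(N-2))$ below.

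The main step is the asymptotic evaluation of this radial integral, which depends on the decay exponent $\alpha$ of $U$ at infinity given by \eqref{eq:decayestimate}: $\alpha=N-2$ if $q\geq N/(N-2)$ (up to a log if $q=N/(N-2)$), and $\alpha=q(N-2)-2$ if $q<N/(N-2)$. Since the integrand behaves like $t^{-s\alpha+N-2}$ at infinity, the convergence threshold is $s\alpha=N-1$, that is, $\alpha=(N-1)/s=N/2$. A direct computation translates $\alpha>N/2$ precisely into $q>(N+4)/(2(N-2))$ when $q<N/(N-2)$ (with the analogous statements for equality and the opposite inequality), while $\alpha>N/2$ is automatic in the complementary regime as soon as $N\geq 5$. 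This produces the three cases of \eqref{stima norma}: a bounded integral, giving the prefactor $\eps^{-N/(p+1)+N/2}$ after taking the $s$-th root and using $(N-1)/s=N/2$; an integral of size $|\log(\eta/\eps)|$ at the borderline $\alpha=N/2$, contributing the extra $|\log\eps|^{N/(2(N-1))}$; and an integral of size $(\eta/\eps)^{N-1-s\alpha}$ in the last regime, whose $s$-th root combines with the prefactor into $\eps^{-N/(p+1)+\alpha}=\eps^{-N/(p+1)+q(N-2)-2}$.

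For $N=4$, the assumption $q>7/3$ forces $q>2=N/(N-2)$ and hence $\alpha=N-2=2=N/2$, which is exactly the logarithmic borderline. Thus the radial integral equals a constant times $|\log\eps|$, and taking the $s$-th root with $s=3/2$ produces the factor $(\log\eps)^{2/3}$; combined with the identity $-N/(p+1)+N/2=2(p-1)/(p+1)$, this yields \eqref{stima norma N=4}. The main obstacle is not conceptual but rather the careful bookkeeping of exponents needed to recognize that the three regimes produced by the radial integral match exactly the three cases of the statement, together with the (routine but slightly tedious) verification that the ``far'' contribution $\partial\Omega\setminus B_\eta(0)$ is absorbed into the near-piece bound in each of those three cases.
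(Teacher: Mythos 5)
Your proposal is correct and follows essentially the same route as the paper: the same explicit formula for $\partial_\nu U_\eps$, the same near/far splitting of $\partial\Omega$, the same parametrization and change of variables yielding the prefactor $\eps^{-2(N-1)/(p+1)+N-1}$ times the radial integral $\int_1^{\eta/\eps}(t|U'(t)|)^s t^{N-2}\,dt$, and the same case analysis driven by the decay exponent of $U$, with the threshold $\alpha=N/2$ translating precisely to $q=(N+4)/(2(N-2))$. The only place you compress more than the paper does is the statement that the far contribution is ``absorbed'': in the regime $q<\frac{N+4}{2(N-2)}$ it is in fact of the \emph{same} order as the near piece rather than strictly smaller (the paper's estimate $(II)$ shows this explicitly), so it is not absorbed but merely contributes another term of the claimed size --- harmless for the stated bound, but worth saying correctly.
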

\begin{proof}
Let us first consider the case $N \ge 5$, namely we first prove \eqref{stima norma}. 
 Notice that 
\begin{align}
\partial_\nu U_\eps(x)& = \partial_\nu(\eps^{-\frac{N}{p+1}}U(x/\eps)) =\eps^{-\frac{N}{p+1} - 1} \nabla U(x/\eps) \cdot \nu(x)  = \eps^{-\frac{N}{p+1} - 1}  U'(|x|/\eps) \frac{x}{|x|} \cdot \nu(x)\label{eq:normal_der}.
\end{align}
Now let $\eta>0$ be as in \eqref{eq:boundary close to 0}.  For each $x \in B_\eta(0)\cap \partial \Omega$, one has \eqref{eq:unitarynormal}, thus 
\[ \frac{x}{|x|} \cdot \nu(x) = \frac{\sum_{j=1}^{N-1} 2\rho_j x_j^2-x_N + O(|x'|^3)}{|x|\sqrt{1+O(|x'|^2)}}=\frac{ \sum_{j=1}^{N-1} \rho_j x_j^2 + O(|x'|^3)}{|x|\sqrt{1+O(|x'|^2)}} \]
and therefore
\begin{equation}\label{eq:point_partialnu} 
\abs{ \partial_\nu U_\eps(x)} \le \eps^{-\frac{N}{p+1} - 1}  \abs{U'\left( \frac{|(x', \rho(x')) |}{\eps}\right) }\abs{\frac{ \sum_{j=1}^{N-1} \rho_j x_j^2 + O(|x'|^3)}{|x|}}. 
\end{equation}
We split the norm as follows 
\begin{equation}\label{split} \int_{\partial \Omega} |\partial_\nu U_\eps|^{\frac{2(N-1)}{N}} = \underbrace{\int_{\partial \Omega \cap B_\eta(0)} |\partial_\nu U_\eps|^{\frac{2(N-1)}{N}}}_{(I)} + \underbrace{\int_{\partial \Omega \setminus B_\eta(0)} |\partial_\nu U_\eps|^{\frac{2(N-1)}{N}}}_{(II)}. \end{equation}
We also observe that estimates \eqref{eq:decayestimate} and \eqref{eq:decayestimates2} tell us that
\begin{equation}\label{decay derivative} rU'(r) \approx 
\begin{cases}
r^{2-N} & \text{ if } q >  \frac{N}{N-2}\\
r^{2-N} \log r & \text{ if } q =  \frac{N}{N-2}\\
r^{2-q(N-2)} & \text{ if } q < \frac{N}{N-2}. 
\end{cases} \end{equation}
as $r\to \infty$, and that
\[
\frac{N+4}{2(N-2)}<\frac{N}{N-2}.
\]

\smallbreak 

Step 1. Estimate of $(I)$. The first integral is, calling $\Delta_\eta$ the projection of  
\[ \partial \Omega \cap B_\eta(0)=\{(x',x_N)\in B_\eta(0):\ x_N=\rho(x')=\sum_{j=1}^{N-1} \rho_j x_j^2+O(|x'|^3))\} \] onto the first $N-1$ components, and observing that the $(N-1)$--surface element is $\sqrt{1+|\nabla \rho(x')|^2}=1+O(|x'|)$ and using \eqref{eq:point_partialnu} 
\begin{align}
\nonumber &\int_{\partial \Omega \cap B_\eta} |\partial_\nu U_\eps|^{\frac{2(N-1)}{N}}  \le \eps^{-2 (\frac{N}{p+1} + 1)\frac{N-1}{N}} \int_{\partial \Omega \cap B_\eta(0)} \abs{ U'\left(\frac{|(x', \rho(x'))|}{\eps}\right) \frac{ \sum_{j=1}^{N-1} \rho_j x_j^2 + O(|x'|^3)}{|x'|\sqrt{1+O(|x'|^2)}} }^{\frac{2(N-1)}{N}} \\							&\le  c' \eps^{-2 (\frac{N}{p+1} + 1)\frac{N-1}{N}}  \left( \int_{\Delta_\eta} \abs{ U'\left(\frac{|(x', \rho(x'))|}{\eps}\right) |x'|}^{\frac{2(N-1)}{N}} + \int_{\Delta_\eta} \abs{ U'\left(\frac{|(x', \rho(x'))|}{\eps}\right)\abs{x'}^2}^{\frac{2(N-1)}{N}}  \right) \nonumber \\
&\leq c''\eps^{-2 (\frac{N}{p+1} + 1)\frac{N-1}{N}}  \int_{\Delta_\eta} \abs{ U'\left(\frac{|(x', \rho(x'))|}{\eps}\right) |x'|}^{\frac{2(N-1)}{N}}  \label{stima 1} ,
\end{align}
by choosing $\eta>0$ sufficiently small. 
 
 We now claim that, as $\eps \to 0^+$,
\begin{equation}\label{eq:estimate_of_(I.1)}
    \int_{\Delta_\eta} \abs{ U'\left(\frac{|(x', \rho(x'))|}{\eps}\right) |x'|}^{\frac{2(N-1)}{N}}=\begin{cases}
      O(\eps^{\frac{(N-1)(N+2)}{N}}) & \text{ if }  q > \frac{N+4}{2(N-2)}\\
     O(\eps^{\frac{(N-1)(N+2)}{N}}\log \eps) &\text{ if } q = \frac{N+4}{2(N-2)} \\
     O(\eps^{\frac{2(N-1)}{N}(q(N-2)-1)})  & \text{ if }  q < \frac{N+4}{2(N-2)}
    \end{cases}
\end{equation}
To check this, we change variable $\eps y'=x'$ and use the expression of $\rho$ to obtain
\[ 
c |y'| \ge |(y', \frac{\rho(\eps y')}{\eps})| \ge |y'|. 
\] Then 
\begin{align}
\nonumber \int_{\Delta_\eta} \abs{ U'\left(\frac{|(x', \rho(x'))|}{\eps}\right) |x'|}^{\frac{2(N-1)}{N}} \, dx' &= \eps^{N-1+ \frac{2(N-1)}{N}}\int_{\Delta_\eta/\eps} \abs{U'(|(y', \frac{\rho(\eps y')}{\eps})|) |y'|}^{\frac{2(N-1)}{N}}\, dy'\\
\nonumber &=O(\eps^{N-1+ \frac{2(N-1)}{N}}) \left(\int_0^1 +\int_1^{\eta/\eps} \right) \abs{U'(r) r}^{\frac{2(N-1)}{N} } r^{N-2}\, dr\\
&= O(\eps^{N-1+ \frac{2(N-1)}{N}}) \left(1+\int_1^{\eta/\eps}  \abs{U'(r) r}^{\frac{2(N-1)}{N} } r^{N-2}\, dr \right).\label{eq:change of var} 
\end{align} 
From \eqref{decay derivative} we have
\[
|U'(r)r|^{\frac{2(N-1)}{N}}  r^{N-2}\approx \begin{cases}
r^{-\frac{(N-2)^2}{N}} & \text{ if } q >  \frac{N}{N-2}\\
r^{N-2  +(2-q(N-2))\frac{2(N-1)}{N}} \log r & \text{ if } q =  \frac{N}{N-2}\\
r^{N-2  +(2-q(N-2))\frac{2(N-1)}{N}}& \text{ if } q < \frac{N}{N-2}.
\end{cases}
\]
 Therefore:
\begin{itemize}
    \item Case $q>\frac{N+4}{2(N-2)}$. In this situation the function is integrable for $r\in [1,\infty)$: indeed,  in case $q >  \frac{N}{N-2}$ this is a consequence of the fact that $N\geq 5$, while in case $q \leq   \frac{N}{N-2}$ is simply because $N-2  +(2-q(N-2))\frac{2(N-1)}{N}<-1 \iff q>\frac{N+4}{2(N-2)}$. Hence,
    \begin{equation*} 
    (I.1)= O(\eps^{N-1+ \frac{2(N-1)}{N}}). 
    \end{equation*} 
    \item Case $q=\frac{N+4}{2(N-2)}$. We have $|U'(r)r|^{\frac{2(N-1)}{N}}  r^{N-2} \approx 1/r$ as $r\to \infty$ and
    \begin{align*}
    (I.1)=O(\eps^{N-1+ \frac{2(N-1)}{N}}) \left(1+\int_1^{\eta/\eps} \frac{1}{r}\, dr \right)=O(\eps^{N-1+ \frac{2(N-1)}{N}}) \left(1+\log \eps \right)=O(\eps^{N-1+ \frac{2(N-1)}{N}} \log \eps).
    \end{align*}
    \item Case  $ q < \frac{N+4}{2(N-2)}$. We have
    \begin{align*}
        (I.1)&=O(\eps^{N-1+\frac{2(N-1)}{N}})\left(1+\eps^{-(N-1)-(2-q(N-2)\frac{2(N-1)}{N})}\right)\\
        &=O(1) \left(\eps^\frac{(N-1)(N+2)}{N}+\eps^{\frac{2(N-1)}{N}(q(N-2)-1)}\right)=O(\eps^{\frac{2(N-1)}{N}(q(N-2)-1)}),
    \end{align*}
    since $\frac{(N-1)(N+2)}{N}>\frac{2(N-1)}{N}(q(N-2)-1) \iff q <\frac{N+4}{2(N-2)}$. 
    \end{itemize}
The three cases combined yield \eqref{eq:estimate_of_(I.1)}

Summing up and going back to \eqref{stima 1}, we have
\begin{equation}\label{eq:estimate_ddnU_Step1}
\int_{\partial \Omega } |\partial_\nu U_\eps|^{\frac{2(N-1)}{N}}=
\underbrace{\int_{\partial \Omega \setminus B_\eta(0)} (\partial_\nu U_\eps)^{\frac{2(N-1)}{N}}}_{(II)}+\begin{cases}
 \displaystyle O(\eps^{\frac{(N-1)(p-1)}{p+1}})   & \text{ if } q>\frac{N+4}{2(N-2)}\\
 O(\eps^{\frac{(N-1)(p-1)}{p+1}}\log \eps)   & \text{ if } q=\frac{N+4}{2(N-2)}\\
O(\eps^{-\frac{2 (N-1)}{p+1} +2\frac{N-1}{N}(q(N-2)-2)})& \text{ if } q<\frac{N+4}{2(N-2)}
\end{cases}.
\end{equation}

\smallbreak

Step 2. Estimate of (II). Recalling \eqref{eq:normal_der} and \eqref{decay derivative} we have, for $x\in \partial \Omega \setminus B_\eta(0)$, 
\[ \abs{\partial_\nu U_\eps} \le  \eps^{-\frac{N}{p+1} - 1}  |U'(|x|/\eps)|=
\begin{cases}
O(\eps^{N-2-\frac{N}{p+1}}) & \text{ if } q>\frac{N}{N-2}\\
O(\eps^{N-2-\frac{N}{p+1}}\log \eps) & \text{ if } q=\frac{N}{N-2}\\
O(\eps^{q(N-2)-2-\frac{N}{p+1}}) & \text{ if } q<\frac{N}{N-2}. 
\end{cases}\]
thus  (because $N>4 \iff \frac{N+4}{2(N-2)}<\frac{N }{N-2}$)
\begin{align} \nonumber \int_{\partial \Omega \setminus B_\eta(0)} (\partial_\nu U_\eps)^{\frac{2(N-1)}{N}}
&=\begin{cases}
O(\eps^{2\frac{N-1}{N}(N-2- \frac{N}{p+1}) } )& \text{ if }q>\frac{N}{N-2}\\
 O(\eps^{2\frac{N-1}{N}(N-2- \frac{N}{p+1}) }(\log \eps)^\frac{2(N-1)}{N} )& \text{ if }q=\frac{N}{N-2}\\
 O(\eps^{-\frac{2 (N-1)}{p+1} +2\frac{N-1}{N}(q(N-2)-2)}) & \text{ if }q<\frac{N}{N-2}\\
\end{cases}\\
&\label{eq:estimate_ddnU_Step2} =\begin{cases}
o(\eps^{\frac{(N-1)(p-1)}{p+1}})& \text{ if }q>\frac{N+4}{2(N-2)}\\
o(\eps^{\frac{(N-1)(p-1)}{p+1}}\log \eps)& \text{ if }q=\frac{N+4}{2(N-2)}\\
 O(\eps^{-\frac{2 (N-1)}{p+1} +2\frac{N-1}{N}(q(N-2)-2)}) & \text{ if }q<\frac{N+4}{2(N-2)}\\
\end{cases}.
\end{align}

\smallbreak

Hence, combining \eqref{eq:estimate_ddnU_Step1} with \eqref{eq:estimate_ddnU_Step2} we obtain \eqref{stima norma}.

\smallbreak 

We now consider the case $N=4$, and we prove \eqref{stima norma N=4}. We again split the norm as in \eqref{split}, and reasoning as in \eqref{stima 1} and \eqref{eq:change of var} we get
\[ \int_{\partial \Omega \cap B_\eta} |\partial_\nu U_\eps|^{\frac{3}{2}} \le c \eps^{- \frac{3}{2} (\frac{4}{p+1} + 1)}  \int_{\Delta_\eta} \abs{ U'\left(\frac{|(x', \rho(x'))|}{\eps}\right) |x'|}^{\frac{3}{2}} =  c \eps^{3\frac{p-1}{p+1}} \left(1+\int_1^{\eta/\eps}  \abs{U'(r) r}^{\frac{2(N-1)}{N} } r^{N-2}\, dr \right). \]
We now observe that 
\[ \abs{U'(r) r}^{\frac{2(N-1)}{N} } r^{N-2} \approx r^{-1}, \] 
thus
\[ \int_{\partial \Omega \cap B_\eta} |\partial_\nu U_\eps|^{\frac{3}{2}} \le c \eps^{3\frac{p-1}{p+1}} |\log \eps|. \] 
 The estimate of (II) is exactly as in Step 2 (recall that \eqref{eq:assumptions_pq3} implies $q>2$), so we get
\[ \int_{\partial \Omega \setminus B_\eta(0)} (\partial_\nu U_\eps)^{\frac{3}{2}} \le c \eps^{3\frac{p-1}{p+1}}=o(\eps^{3\frac{p-1}{p+1}} |\log \eps|). \]
We now immediately deduce \eqref{stima norma N=4}. 
\end{proof}

\begin{lemma}\label{eq:estimate_normal_derivative2}
Assume $p,q$ satisfy \eqref{eq:assumptions_pq} or \eqref{eq:assumptions_pq2}. One has
\begin{equation*}\label{stima norma2}
\norm{\partial_\nu \ve}_{L^{\frac{2(N-1)}{N}}(\partial \Omega)}
  \leq c \eps^{-\frac{N}{q+1} + \frac{N}{2}}
\end{equation*}
If \eqref{eq:assumptions_pq3} holds, then 
\[ \norm{\partial_\nu \ve}_{L^{\frac{3}{2}}(\partial \Omega)}
  \leq c \eps^{2 \frac{q-1}{q+1}}(\log \eps)^{\frac 23 }. \]
\end{lemma}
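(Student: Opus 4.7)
The plan is to follow step by step the proof of Lemma \ref{eq:estimate_normal_derivative}, taking advantage of a substantial simplification: by \eqref{eq:decayestimate}--\eqref{eq:decayestimates2}, the function $V$ satisfies $V(r)\approx rV'(r)\approx r^{2-N}$ at infinity \emph{unconditionally}, without any split in regimes of $q$. (After exchanging the roles of $p,q$ and $U,V$, the three assumptions \eqref{eq:assumptions_pq}--\eqref{eq:assumptions_pq3} always place us in the fastest-decay regime.) Hence no case distinction on the size of $q$ is needed, which is precisely what made the analogous estimate for $U$ split into three subcases.

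First I would write
\[
\partial_\nu V_\eps(x)=\eps^{-\frac{N}{q+1}-1}\,V'(|x|/\eps)\,\frac{x}{|x|}\cdot\nu(x),
\]
and split $\int_{\partial\Omega}|\partial_\nu V_\eps|^{2(N-1)/N}$ into $(I)$, the integral over $\partial\Omega\cap B_\eta(0)$, and $(II)$, the integral over $\partial\Omega\setminus B_\eta(0)$. For $(I)$, the parametrization \eqref{eq:boundary close to 0}, the formula \eqref{eq:unitarynormal}, and the pointwise bound $|\frac{x}{|x|}\cdot\nu(x)|\le c|x'|$ near $0$, lead, exactly as in the derivation of \eqref{stima 1}, to
\[
(I)\le c\,\eps^{-\frac{2(N-1)}{N}\left(\frac{N}{q+1}+1\right)}\int_{\Delta_\eta}\Bigl|V'\!\left(\tfrac{|(x',\rho(x'))|}{\eps}\right)|x'|\Bigr|^{\frac{2(N-1)}{N}}dx'.
\]
Rescaling $x'=\eps y'$ as in \eqref{eq:change of var} converts this into a constant times $\eps^{(N-1)(q-1)/(q+1)}\bigl(1+\int_1^{\eta/\eps}|V'(r)r|^{2(N-1)/N}\,r^{N-2}\,dr\bigr)$. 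The decay $|V'(r)r|\approx r^{2-N}$ makes the integrand behave like $r^{-(N-2)^2/N}$ at infinity. For $N\ge 5$ we have $(N-2)^2/N>1$, so the integral is uniformly bounded and $(I)\le c\,\eps^{(N-1)(q-1)/(q+1)}$; for $N=4$ the exponent is exactly $-1$, producing an extra $|\log\eps|$ factor.

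For $(II)$, the same decay gives $|V'(r)|\approx r^{1-N}$, hence $\|\partial_\nu V_\eps\|_{L^\infty(\partial\Omega\setminus B_\eta)}\le c\,\eps^{-N/(q+1)+N-2}$ and
\[
(II)\le c\,\eps^{\frac{2(N-1)}{N}\left(N-2-\frac{N}{q+1}\right)}.
\]
An exponent comparison (the difference of the two exponents equals $(N-1)(N-4)/N\ge 0$ for $N\ge 4$) shows $(II)$ is dominated by $(I)$ — with equality of orders in $N=4$, harmlessly absorbed by the log in $(I)$. Taking the $\frac{N}{2(N-1)}$-th root and using the identity $\frac{2(N-1)}{N}\bigl(-\tfrac{N}{q+1}+\tfrac{N}{2}\bigr)=\tfrac{(N-1)(q-1)}{q+1}$ yields the stated bound for $N\ge 5$; in the case $N=4$, the $|\log\eps|$ inside the integral becomes $(\log\eps)^{2/3}$ after taking the $2/3$-th power, matching the desired estimate. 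No genuinely hard step is expected: everything is a direct, and actually simpler, specialization of the argument for $U$ in Lemma \ref{eq:estimate_normal_derivative}.
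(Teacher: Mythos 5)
Your proposal is correct and follows essentially the same route as the paper's own proof: write the pointwise formula for $\partial_\nu V_\eps$, split the boundary integral near and away from $x_0=0$, rescale $x'=\eps y'$ to extract the factor $\eps^{(N-1)(q-1)/(q+1)}$, and exploit the unconditional decay $rV'(r)\approx V(r)\approx r^{2-N}$ from \eqref{eq:decayestimate}--\eqref{eq:decayestimates2} so that the radial integral converges for $N\ge 5$ and produces a logarithm for $N=4$. Your explicit exponent comparison showing $(II)$ is dominated by $(I)$ is slightly more detailed than the paper's terse statement, but the argument is the same; the parenthetical justification for the unconditional decay of $V$ via exchanging roles is unnecessary (the paper's \eqref{eq:decayestimate} already gives $V(r)\approx r^{2-N}$ with no case split), though harmless.
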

\begin{proof}
The proof is quite similar to the one of Lemma \ref{eq:estimate_normal_derivative}, to which we refer for the notation and more details. We have
\begin{align*}
\int_{\partial \Omega \cap B_\eta(0)} |\partial_\nu V_\eps|^\frac{2(N-1)}{N}& \leq  c\eps^{-2 (\frac{N}{q+1} + 1)\frac{N-1}{N}}  \int_{\Delta_\eta} \abs{ V'\left(\frac{|(x', \rho(x'))|}{\eps}\right) |x'|}^{\frac{2(N-1)}{N}}  \\
&\leq c' \eps^{-2 (\frac{N}{q+1} + 1)\frac{N-1}{N}} \eps^{N-1+ \frac{2(N-1)}{N}} \left(1+\int_1^{\eta/\eps}  \abs{V'(r) r}^{\frac{2(N-1)}{N} } r^{N-2}\, dr \right) \\
&=c' \eps^{\frac{(N-1)(q-1)}{q+1}}\left(1+\int_1^{\eta/\eps}  \abs{V'(r) r}^{\frac{2(N-1)}{N} } r^{N-2}\, dr \right).
\end{align*}
Now estimates \eqref{eq:decayestimate} and \eqref{eq:decayestimates2} provide
\[
rV'(r)\approx r^{2-N}, \quad \text{ so that } \quad |rV'(r)|^\frac{2(N-1)}{N}r^{N-2}\approx r^{-\frac{(N-2)^2}{N}},
\]
which is integrable at infinity if $N \ge 5$, and is equal to $r^{-1}$ if $N=4$. Then if $N \ge 5$
\[
\int_\Omega |\partial_\nu V_\eps|^\frac{2(N-1)}{N} = \int_{\partial \Omega \cap B_\eta(0)} |\partial_\nu V_\eps|^\frac{2(N-1)}{N}+\int_{\partial \Omega \setminus B_\eta(0)} |\partial_\nu V_\eps|^\frac{2(N-1)}{N}=O(\eps^\frac{(N-1)(q-1)}{(q+1)})
\]
whereas if $N=4$
\[
\int_\Omega |\partial_\nu V_\eps|^\frac{3}{2} =O(\eps^{3\frac{q-1}{q+1}}|\log \eps|)
\]
and the proof is finished.
\end{proof}

\section*{Acknowledgments.}

Delia Schiera and Hugo Tavares are partially supported by the Portuguese government through FCT - Funda\c c\~ao para a Ci\^encia e a Tecnologia, I.P., under the projects UID/MAT/04459/2020 and PTDC/MAT-PUR/1788/2020. Delia Schiera is also partially supported by FCT, I.P. and, when eligible, by COMPETE 2020 FEDER funds, under the Scientific Employment Stimulus - Individual Call (CEEC Individual) - 2020.02540.CEECIND/CP1587/CT0008.

\noindent \textbf{Angela Pistoia}\\
Dipartimento di Scienze di Base e Applicate per l’Ingegneria\\
Sapienza Universita di Roma\\
Via Scarpa 16, 00161 Roma, Italy\\
\texttt{angela.pistoia@uniroma1.it} 
\vspace{.3cm}

\noindent \textbf{Delia Schiera, Hugo Tavares}\\
Departamento de Matemática do Instituto Superior Técnico\\
Universidade de Lisboa\\
Av. Rovisco Pais\\
1049-001 Lisboa, Portugal\\
\texttt{delia.schiera@tecnico.ulisboa.pt, hugo.n.tavares@tecnico.ulisboa.pt}


\begin{thebibliography}{9}
\bibitem{Adams} R. Adams and  J.Fournier, \textit{Sobolev Spaces}. Second edition. Pure and Applied Mathematics (Amsterdam), 140. Elsevier/Academic Press, Amsterdam, 2003. 
\bibitem{Agmon} S. Agmon, A. Douglis and L. Nirenberg, Estimates near the boundary for solutions of elliptic partial differential equations satisfying general boundary conditions. I, \textit{Comm. Pure Appl. Math.} \textbf{12}: 623--727, 1959.
 \bibitem{AY} A.I. Ávila, J. Yang, On the existence and shape of least energy solutions for some elliptic systems, \textit{J. Differential
Equations} \textbf{191}(2): 348--376, 2003.
\bibitem{BCN} D. Bonheure, H. Cheikh Ali and R. Nascimento, A Paneitz-Branson type equation with Neumann boundary conditions, \textit{Adv. Calc. Var.} \textbf{14}(4):499--519, 2021.
\bibitem{BST} D. Bonheure, E. Moreira dos Santos,  and H. Tavares, Hamiltonian elliptic systems: a guide to variational frameworks, \textit{Port. Math.} \textbf{71}, no. 3-4, 301--395, 2014.
\bibitem{BSTilli}  D. Bonheure, E. Serra, P. Tilli, Radial positive solutions of elliptic systems with Neumann boundary conditions,
\textit{J. Funct. Anal.} \textbf{265}(3):375--398, 2013.
\bibitem{ChenJinLiLim} W. Chen, C. Chao, C. Li, J. Lim, Weighted Hardy-Littlewood-Sobolev inequalities and systems of integral equations,  \textit{Discrete Contin. Dyn. Syst.} 2005, suppl., 164--172.
\bibitem{ChenLi} W. Chen, C. Li, Regularity of solutions for a system of integral equations. \textit{Commun. Pure Appl. Anal.} \textbf{4}(1):1--8, 2005.
\bibitem{Cherrier} P. Cherrier, Meilleures constantes dans des inégalités relatives aux espaces de Sobolev, \textit{Bull. Sci. Math.} (2), \textbf{108}(3):225--262, 1984.
\bibitem{ChoiKim} W. Choi, S. Kim, Asymptotic behavior of least energy solutions to the Lane-Emden system near the critical hyperbola. \textit{J. Math. Pures Appl.}(9) \textbf{132}:398--456, 2019. 
\bibitem{ClappSaldana} M. Clapp, A. Salda\~na, Entire nodal solutions to the critical Lane-Emden system. \textit{Comm. Partial Differential Equations} \textbf{45}, no. 4, 285--302, 2020. 
\bibitem{CFM} P. Clément, D.G. de Figueiredo, E. Mitidieri, E. 
Positive solutions of semilinear elliptic systems. 
\textit{Comm. Partial Differential Equations} \textbf{17}, no. 5-6, 923--940, 1992. 
\bibitem{CK} M. Comte and M. C. Knaap, Existence of solutions of elliptic equations involving critical Sobolev exponents with Neumann boundary condition in general domains, \textit{Differential Integral Equations}, \textbf{4}(6):1133--1146, 1991.
\bibitem{DRW} O. Druet, F. Robert, J. Wei, The Lin-Ni's problem for mean convex domains, Mem. Amer. Math. Soc. \textbf{218}, no. 1027, vi+105 pp., 2012. 
\bibitem{FKP} R.L. Frank, S. Kim, A. Pistoia, 
Non-degeneracy for the critical Lane-Emden system.
\textit{Proc. Amer. Math. Soc.} \textbf{149}, no. 1, 265--278, 2021. 
\bibitem{GT} D. Gilbarg and N.S. Trudinger. Elliptic partial differential equations of second order. Reprint of the 1998 edition. Classics in Mathematics. Springer-Verlag, Berlin, 2001. xiv+517 pp.
\bibitem{Guerra} I.A. Guerra,
Solutions of an elliptic system with a nearly critical exponent. 
\textit{Ann. Inst. H. Poincaré C Anal. Non Linéaire} \textbf{25}, no. 1, 181--200, 2008
\bibitem{HMV} J. Hulshof, E. Mitidieri and  R. Vandervorst, Strongly indefinite systems with critical Sobolev exponents, \textit{Trans. Amer. Math. Soc.} \textbf{350}(6):2349--2365, 1998.
\bibitem{HV}  J. Hulshof and R. Vandervorst, Asymptotic behaviour of ground states, \textit{Proc. Amer. Math. Soc.} \textbf{124}:2423--2431, 1996.
\bibitem{KimCoron} S. Jin and S. Kim,  Coron's problem for the critical Lane-Emden system, preprint arXiv:2210.13068. 
\bibitem{KM} S. Kim, S.-H. Moon, Asymptotic analysis on positive solutions of the Lane-Emden system with nearly critical exponents, \textit{Trans. Amer. Math. Soc.}, 2023.
\bibitem{Pistoia} S. Kim and A. Pistoia, A perturbative approach to non-degeneracy of the Lane-Emden system, preprint arXiv:1907.11303. 
\bibitem{KP} S. Kim, A. Pistoia,
Multiple blowing-up solutions to critical elliptic systems in bounded domains,
\textit{J. Funct. Anal.} \textbf{281}(2), Paper No. 109023, 58 pp, 2021.
\bibitem{LFMS} J. Lange Ferreira Melo, E. Moreira dos Santos, Critical and noncritical regions on the critical hyperbola. Contributions to nonlinear elliptic equations and systems, pp.345–-370, Progr. Nonlinear Differential Equations Appl., 86, Birkhäuser/Springer, Cham, 2015. 
\bibitem{LiebLoss} E. Lieb and M. Loss, \text{Analysis}. Second edition. Graduate Studies in Mathematics, 14. American Mathematical Society, Providence, RI, 2001. 
\bibitem{Lions}  P.-L. Lions, The concentration-compactness principle in the calculus of variations. The limit case. I, \textit{Rev. Mat. Iberoamericana} \textbf{1}(1):145--201, 1985. 
\bibitem{Mitidieri} E. Mitidieri, A Rellich type identity and applications. \textit{Comm. Partial Differential Equations} \textbf{18}, no. 1-2, 125--151, 1993. 
\bibitem{PW} E. Parini, T. Weth, Existence, unique continuation and symmetry of least energy nodal solutions to sublinear Neumann problems. \textit{Math. Z.} \textbf{280}, no. 3-4, 707--732, 2015.
\bibitem{PvV}  L.A. Peletier, R.C.A.M. Van der Vorst,  Existence and nonexistence of positive solutions of nonlinear elliptic systems and the biharmonic equation. \textit{Differential Integral Equations} \textbf{5}, no. 4, 747--767, 1992. 
\bibitem{PR}  A. Pistoia, M. Ramos, Locating the peaks of the least energy solutions to an elliptic system with Neumann boundary
conditions, \textit{J. Differential Equations} \textbf{201} (1) 160–-176, 2004.
\bibitem{PST} A. Pistoia, A. Salda\~na and H. Tavares, Existence of solutions to a slightly supercritical pure Neumann problem,  preprint arXiv:2209.02113.
\bibitem{RY}  M. Ramos, J. Yang, Spike-layered solutions for an elliptic system with Neumann boundary conditions, \textit{Trans. Amer.
Math. Soc.} \textbf{357}(8):3265--3284, 2005. 
\bibitem{Rassias} G.~M. Rassias and  T.~M. Rassias (Eds.), Differential Geometry, Calculus of Variations, and Their Applications, Lect. Notes Pure Appl. Math., vol. 100, Marcel Dekker, Inc., New York, 1985.
\bibitem{reywei} O. Rey, J. Wei, Arbitrary number of positive solutions for an elliptic problem with critical nonlinearity, \textit{J. Eur. Math. Soc. (JEMS)} \textbf{7}:449--476, 2005.
\bibitem{ST2} A. Salda{\~n}a and H. Tavares, Least energy nodal solutions of Hamiltonian elliptic systems with Neumann boundary conditions, \textit{J. Differential Equations}, \textbf{265}(12):6127--6165, 2018.
\bibitem{ST} A. Salda{\~n}a and H. Tavares, On the least-energy solutions of the pure Neumann Lane--Emden equation, \textit{NoDEA - Nonlinear Differential Equations and Applications} \textbf{29}, Article number: 30 (2022). 
\bibitem{SW} A. Salda{\~n}a, T. Weth, Asymptotic axial symmetry of solutions of parabolic equations in bounded radial domains, \textit{J. Evol. Equ.} \textbf{12}, no. 3, 697--712, 2012.
\bibitem{VanderVorst} R.C.A.M. Van der Vorst, Variational identities and applications to differential systems. \textit{Arch. Rational Mech. Anal.} \textbf{116}, no. 4, 375--398, 1992. 
\bibitem{Willem} M. Willem, Minimax theorems. Progress in Nonlinear Differential Equations and their Applications, 24. Birkhäuser Boston, Inc., Boston, MA, 1996. 
\bibitem{Zeng}  J. Zeng, The estimates on the energy functional of an elliptic system with Neumann boundary conditions, \textit{Bound.
Value Probl.} (194):11, 2013. 
\end{thebibliography}
\end{document}